\definecolor{black}{rgb}{0.0, 0.0, 0.0}
\definecolor{red}{rgb}{1.0, 0.5, 0.5}
\newcommand{\margnote}[1]{
\ifthenelse{\boolean{shownotes}}%
{\marginpar{\raggedright\tiny\texttt{#1}}}%
{}%
}
\newcommand{\hole}[1]{
\ifthenelse{\boolean{shownotes}}%
{\begin{center} \fbox{ \rule {.25cm}{0cm} \rule[-.1cm]{0cm}{.4cm}
\parbox{.85\textwidth}{\begin{center} \texttt{#1}\end{center}} \rule
{.25cm}{0cm}}\end{center}} {} }
\title[The pressureless damped Euler--Riesz equations]{The pressureless damped Euler--Riesz equations}
\author[Choi]{Young-Pil Choi}
\address[Young-Pil Choi]{\newline Department of Mathematics \newline
Yonsei University, 50 Yonsei-Ro, Seodaemun-Gu, Seoul 03722, Republic of Korea}
\email{ypchoi@yonsei.ac.kr}
\author[Jung]{Jinwook Jung}
\address[Jinwook Jung]{\newline Research Institute of Basic Sciences \newline Seoul National University, Seoul  08826, Republic of Korea}
\email{warp100@snu.ac.kr}
\numberwithin{equation}{section}
\newtheorem{theorem}{Theorem}[section]
\newtheorem{lemma}{Lemma}[section]
\newtheorem{proposition}{Proposition}[section]
\newtheorem{remark}{Remark}[section]
\newcommand{\R}{\mathbb R}
\newcommand{\N}{\mathbb N}
\newcommand{\Z}{\mathbb Z}
\newcommand{\om}{\Omega}
\newcommand{\ls}{\lesssim}
\newcommand{\T}{\mathbb T}
\newcommand{\mc}{\mathcal C}
\newcommand{\bq}{\begin{equation}}
\newcommand{\eq}{\end{equation}}
\newcommand{\e}{\varepsilon}
\newcommand{\lt}{\left}
\newcommand{\rt}{\right}
\newcommand{\pa}{\partial}
\newcommand{\me}{\mathcal{E}}
\newcommand{\md}{\mathcal{D}}
\def\moverlay{\mathpalette\mov@rlay}
\def\mov@rlay#1#2{\leavevmode\vtop{%
   \baselineskip\z@skip \lineskiplimit-\maxdimen
   \ialign{\hfil$\m@th#1##$\hfil\cr#2\crcr}}}
\newcommand{\charfusion}[3][\mathord]{
    #1{\ifx#1\mathop\vphantom{#2}\fi
        \mathpalette\mov@rlay{#2\cr#3}
      }
    \ifx#1\mathop\expandafter\displaylimits\fi}
\begin{document}
\allowdisplaybreaks

\date{\today}

\subjclass[]{}
\keywords{Pressureless damped Euler--Riesz system, global well-posedness, large-time behavior, negative Sobolev spaces, interpolation.}

\begin{abstract} In this paper, we analyze the pressureless damped Euler--Riesz equations posed in either $\R^d$ or $\T^d$. We construct the global-in-time existence and uniqueness of classical solutions for the system around a constant background state. We also establish large-time behaviors of classical solutions showing the solutions towards the equilibrium as time goes to infinity. For the whole space case, we first show the algebraic decay rate of solutions under additional assumptions on the initial data compared to the existence theory. We then refine the argument to have the exponential decay rate of convergence even in the whole space. In the case of the periodic domain, without any further regularity assumptions on the initial data, we provide the exponential convergence of solutions.
\end{abstract}

\maketitle \centerline{\date}


%
%
%
%
\section{Introduction}
In this paper, we are interested in the global well-posedness and large-time behavior for the pressurelss Euler--Riesz equations with linear damping posed either in $\om =\R^d$ or $\T^d$:
\begin{align}\label{np_ER}
\begin{aligned}
&\pa_t \rho + \nabla \cdot (\rho u) = 0, \quad x \in \Omega, \quad t > 0,\cr
&\pa_t (\rho u) + \nabla \cdot (\rho u \otimes u) = -\gamma \rho u -\lambda \rho \nabla \Lambda^{\alpha-d} (\rho-c),
\end{aligned}
\end{align}
subject to the initial data
\bq\label{ini_ER}
(\rho,u)|_{t = 0} =: (\rho_0, u_0), \quad x \in \Omega,
\eq
where $\rho=\rho(x,t)$ and $u= u(x,t)$ denote the density and velocity  of the fluid at time $t$ and position $x$, respectively. Here, the Riesz operator $\Lambda^{s}$ is defined by $(-\Delta)^{s/2}$, and we concentrate on the case $d-2<\alpha<d$. The coefficients $\gamma$ and $\lambda$ are positive constants, and $c>0$ is the nonzero background state. For $\Lambda^{\alpha-d} (\rho-c)$ to be well-defined, we impose the neutrality condition: 
\[
\int_\Omega (\rho - c)\,dx = 0.
\]
Without loss of generality, for simplicity of presentation, we set $\gamma = \lambda = c=1$.

The pressureless Euler--Riesz system is recently derived in \cite{Ser20} from the $N$-interacting particle system governed by Newton's laws. In \cite{Ser20}, the interaction between particles is given by the force fields $\nabla K$ where $K(x) = |x|^{-\alpha}$ with $d-2 < \alpha < d$, and the modulated kinetic and interaction energies are employed to show the quantitative error estimate between the particle and Euler--Riesz systems. We would like to remark that the case $\alpha=d-2$ with $d \geq 3$ corresponds to the Coulomb potential, and the case $\max\{d-2,0\} < \alpha < d$ with $d \geq 1$ is called Riesz potential.
The local well-posedness theory for the system \eqref{np_ER} is developed in \cite{CJpre}. Strictly speaking, in \cite{CJpre}, the system \eqref{np_ER} with zero background state in the undamped case, i.e. $c=0$ and $\gamma=0$, is considered, however, a small modification of the strategy used in \cite{CJpre} leads to establishing the local existence and uniqueness of classical solutions to the system \eqref{np_ER}, see Theorem \ref{thm_local} below for more detailed discussion.

The main purpose of the current work is to establish the global-in-time existence and uniqueness of classical solutions to the pressureless damped Euler--Riesz system \eqref{np_ER} and its large-time behavior. We would like to emphasize that to the best of our knowledge, even for the multi-dimensional Euler--Poisson system, in the absence of the pressure, much less is known about the global-in-time regularity of classical solutions or the large-time behavior estimate. For the one-dimensional case, a critical threshold on the initial data distinguishing the global-in-time regularity of solutions and finite-time singularity formation for the pressureless Euler--Poisson system is analyzed in \cite{BL20, CCZ16, EHT01}, see also \cite{CCTT16, TW08} for the case with pressure and other related systems. For higher-dimensional problems, the critical threshold estimate for the 2D {\it restricted} Euler--Poisson system is studied in \cite{TL03}, see \cite{LT02} for more general discussion on the restricted flows. The global existence of smooth solutions for the Euler--Poisson system around a constant background state is discussed in \cite{IP13, JLZ14, LW14}. We also refer to \cite{G98, GP11, HJ19} for the three-dimensional problems.

In order to state our first main result concerns the global well-posedness theory, we use $\rho>0$ and let $h= \rho-1$ to reformulate the system \eqref{np_ER}-\eqref{ini_ER} as
\bq\label{np_ER2}
\begin{aligned}
&\pa_t h + \nabla\cdot (hu) + \nabla \cdot u = 0, \quad x \in \Omega, \quad t > 0,\cr
&\pa_t u + u\cdot \nabla u = -u -\nabla \Lambda^{\alpha-d} h,
\end{aligned}
\eq
with initial data
\bq\label{ini_ER2}
(h,u)|_{t=0} =: (h_0 := \rho_0-1,u_0), \quad x \in \Omega.
\eq
For our solution spaces, we consider the following norms:
\bq\label{def_Xm}
\|(h,u)\|_{X^m}^2 := \|h\|_{H^{m}}^2 + \|u\|_{H^{m+\frac{d-\alpha}{2}}}^2 + \|h\|_{\dot{H}^{-\frac{d-\alpha}{2}}}^2.
\eq 
The notation $X^m$ naturally denotes the space of functions with finite corresponding norm.
 
Now, we state our first result on the global existence and uniqueness of solutions to \eqref{np_ER}.

\begin{theorem}\label{main_thm}
Consider the system \eqref{np_ER2} on either $\om = \R^d$ or $\T^d$ with $d \ge 1$ and  $\max\{d-2,0\} < \alpha <d$. For any $m>\frac{d}{2} + 2$,  suppose that the initial data $(\rho_0, u_0)$ satisfy 
\[
\inf_{x\in\om} h_0(x) >-1, \quad \int_\Omega h_0(x) \,dx = 0, \quad \mbox{and} \quad (h_0, u_0) \in X^m. 
\] 
If
\[
\|(h_0, u_0)\|_{X^m}< \e_1,
\]
for some $\e_1>0$ sufficiently small, the the system \eqref{np_ER2}-\eqref{ini_ER2} admits a unique solution in $\mc(\R_+; X^m)$. 
\end{theorem}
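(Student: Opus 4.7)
The plan is to combine the local existence provided by Theorem \ref{thm_local} with a uniform-in-time a priori estimate via a standard continuation argument. Specifically, I would establish that any $X^m$-solution of \eqref{np_ER2}-\eqref{ini_ER2} with sufficiently small norm obeys a differential inequality of the form
\[
\frac{d}{dt}\me(t)+\md(t)\le C\sqrt{\me(t)}\,\md(t),
\]
where $\me(t)$ is equivalent to $\|(h,u)(t)\|_{X^m}^2$ and $\md(t)$ is a nonnegative dissipation functional controlling both $\|u\|_{H^{m+(d-\alpha)/2}}^2$ and a suitable Sobolev norm of $\nabla h$. Choosing $\e_1$ small enough ensures $C\sqrt{\me(t)}\le1/2$ at $t=0$ and hence, by continuity, for all later times; $\me$ stays uniformly bounded in time and the local solution extends globally, with uniqueness inherited directly from the local theory.

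The basic linear energy identity is obtained by pairing the first equation of \eqref{np_ER2} with $\Lambda^{\alpha-d}h$ and the second with $u$: the skew-symmetric coupling cancels, and the linear damping $-u$ produces
\[
\tfrac{1}{2}\tfrac{d}{dt}\bigl(\|u\|_{L^2}^2+\|h\|_{\dot H^{-(d-\alpha)/2}}^2\bigr)+\|u\|_{L^2}^2=\text{cubic remainder}.
\]
The asymmetric regularities $u\in H^{m+(d-\alpha)/2}$ and $h\in H^m$ in the definition of $X^m$ precisely mirror the symmetrization of the linear principal part obtained by premultiplying the $h$-equation by $\Lambda^{-(d-\alpha)/2}$. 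For multi-indices $\beta$ with $|\beta|\le m$ (together with a fractional step of order $(d-\alpha)/2$ on $u$), I would repeat the pairing after applying $\pa^\beta$ to both equations, controlling commutators such as $[\pa^\beta,u\cdot\nabla]u$ and $[\pa^\beta,\nabla\cdot](hu)$ by Kato--Ponce and Moser-type estimates; the hypothesis $m>d/2+2$ supplies $H^m\hookrightarrow W^{2,\infty}$ and yields the required $L^\infty$ factors of $\|(h,u)\|_{X^m}$ on the cubic remainders.

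The central difficulty is that the above procedure yields no direct dissipation for $h$. To remedy this, I would introduce the hypocoercive cross functional
\[
\mathcal{I}(t)=\sum_{|\beta|\le m}\int_{\om}\pa^\beta u\cdot\nabla\Lambda^{\alpha-d}\pa^\beta h\,dx,
\]
whose time derivative satisfies, after using both equations and integrating by parts,
\[
\tfrac{d}{dt}\mathcal{I}(t)+\sum_{|\beta|\le m}\|\nabla\Lambda^{(\alpha-d)/2}\pa^\beta h\|_{L^2}^2\le C\|u\|_{H^{m+(d-\alpha)/2}}^2+\text{cubic},
\]
where the $u$-norm on the right is absorbed by the velocity dissipation produced by the Sobolev-level estimates (after interpolation) and $\mathcal{I}$ itself is controlled by a Young inequality that exploits the $\|h\|_{\dot H^{-(d-\alpha)/2}}$ term already present in $\me$. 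Adding $\kappa\mathcal{I}$ to the total energy for $\kappa>0$ sufficiently small then yields a coercive $\md$ containing genuine dissipation for $h$ in $\dot H^{m+1-(d-\alpha)/2}$ without destroying the positivity of $\me$.

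The main obstacle will be controlling the nonlinear contributions at the negative-regularity endpoint $\|h\|_{\dot H^{-(d-\alpha)/2}}^2$: the transport term $\nabla\cdot(hu)$ has to be estimated in $\dot H^{-(d-\alpha)/2}$, which demands a fractional product estimate that behaves well at low frequencies and makes essential use of the neutrality $\int_\om h\,dx=0$ so that $\Lambda^{\alpha-d}h$ remains well-defined. Once this endpoint estimate, the hypocoercive identity, and the higher-order Sobolev estimates are combined, the displayed differential inequality closes, the bootstrap delivers uniform-in-time control of $\me$, and Theorem \ref{thm_local} then produces the unique global solution in $\mc(\R_+;X^m)$.
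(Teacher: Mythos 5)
Your high-level strategy — local existence plus a hypocoercive energy estimate closed by a continuation argument — matches the paper's. But the specific cross functional you propose will not close the estimate, and there is an internal inconsistency in your dissipation claim. If $\mathcal{I}_\beta=\int_\om \pa^\beta u\cdot\nabla\Lambda^{\alpha-d}\pa^\beta h\,dx$, then the contribution of the Riesz term $-\nabla\Lambda^{\alpha-d}h$ in $\pa_t u$ is $-\|\nabla\Lambda^{\alpha-d}\pa^\beta h\|_{L^2}^2$, not $-\|\nabla\Lambda^{(\alpha-d)/2}\pa^\beta h\|_{L^2}^2$ as you wrote; the dissipation is at order $|\beta|+1-(d-\alpha)$, not $|\beta|+1-\frac{d-\alpha}{2}$. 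Moreover, taking $|\beta|$ up to $m$ is impermissible: for the term $\nabla\Lambda^{\alpha-d}\pa^\beta h$ to be square-integrable with $\pa^\beta u$ available only in $H^{(d-\alpha)/2}$, you must put part of the fractional smoothing on $u$, leaving $h$ at order $m+1-\frac{3(d-\alpha)}{2}$, which exceeds $m$ whenever $d-\alpha<2/3$ (a range allowed by the hypothesis $\max\{d-2,0\}<\alpha<d$). Restricting to $|\beta|\le m-1$ fixes this, but then the dissipation lands only in $\dot H^{m-(d-\alpha)}$, strictly below $\dot H^m$, so the cubic remainders of the Sobolev-level estimates, which contain $\|\nabla h\|_{H^{m-1}}^3$, cannot be absorbed.

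The paper circumvents this by placing the large fractional power on $u$ instead of on $h$: the cross functional is $\sum_{0\le k\le m-1}\int_\om \tfrac1\rho\,\pa^k\nabla h\cdot\Lambda^{d-\alpha}\pa^k u\,dx$. The $h$-factor is $\pa^k\nabla h$, of order $k+1\le m$; the $u$-factor $\Lambda^{d-\alpha}\pa^k u$ has order $k+(d-\alpha)\le m-1+(d-\alpha)\le m+\frac{d-\alpha}{2}$, so both sides stay inside $X^m$, and differentiating in time produces exactly $-\int\tfrac1\rho|\pa^k\nabla h|^2\,dx$, i.e., coercive dissipation in $\dot H^m$ for $h$. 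Two further ingredients you omit but should note: the restriction $k\le m-1$ (not $m$) is essential, and the weight $1/\rho$ (equivalently the modified norm $\|h\|_{\tilde H^m}=\sum_{0<|k|\le m}\|\rho^{-1/2}\pa^k h\|_{L^2}$) is the symmetrizer needed to cancel the top-order terms between the $\pa_t h+\nabla\cdot(hu)+\nabla\cdot u=0$ estimate and the $u$-equation estimate without loss of derivatives. With the cross functional as you have written it, the argument does not close.
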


As mentioned above, the local-in-time existence of solutions for the system \eqref{np_ER} with $\gamma=0$ is investigated in a recent work \cite{CJpre}. We are currently interested in the linear velocity-damping effect on the global-in-time regularity of solutions, and as stated in Theorem \ref{main_thm}, the damping can prevent the finite-time breakdown of smoothness of solutions, even in the absence of pressure, when the initial data are sufficiently small and regular. This is reminiscent of the proof of a global Cauchy problem for the compressible Euler equations with damping \cite{STW03,TW13}. However, we only have the Riesz interactions, not the pressure term. On the other hand, for the Euler--Poisson system around a constant background state, i.e. the system \eqref{np_ER} with pressure, $\alpha=d-2$, and $\gamma=0$, two-and three-dimensional Cauchy problems are first discussed in \cite{IP13, LW14} and in \cite{G98} under the consideration of irrotational flows. Compared to those works, we have the linear damping in velocity instead of the pressure term. The main difficulties lie in the analysis of the highest-order derivative estimates and the dissipation rate on the solutions due to the singularity of Riesz interactions, beyond the Coulomb ones. It is natural to expect that the linear damping gives a good dissipation rate for the velocity $u$. However, it is not clear how to analyze the stabilizing effect from the Riesz interactions and obtain a proper dissipation rate for the perturbed density $h$. In order to overcome those difficulties, inspired by \cite{CJpre}, we estimate our solutions in the fractional Sobolev space specified in \eqref{def_Xm} and a modified $H^m$ norm for $h$, see \eqref{def_tH} below, to have some cancellations of terms with the highest-order derivatives. For the dissipation estimates for $h$, we clarify the dispersive effect of the Riesz interaction and establish a delicate {\it hypocoercivity}-type estimate which provides the higher-order dissipation rate. The proof highly relies on the energy method based on the commutator estimates for the fractional Laplacian and Gagliardo--Nirenberg--Sobolev-type inequalities. 

\begin{remark}In \cite{CJpre3}, after a suitable scaling, the strong relaxation limit of the system \eqref{np_ER} with the zero background state, i.e. $c=0$, is investigated, and the following fractional porous medium equation \cite{CSV13,CV11}  is rigorously and quantitatively derived:
\[
\pa_t \rho = \nabla \cdot (\rho\nabla \Lambda^{\alpha-d} \rho).
\]
We would like to remark that the argument used in \cite{CJpre3} can be extended to the nonzero constant background state case when $\om = \T^d$. The local-in-time existence and uniqueness of classical solutions for that limiting equation are recently established in \cite{CJpre2}. Note that as long as there exist classical solutions for those systems, the strong relaxation limit estimate holds. Thus as a by-product of Theorem \ref{main_thm} if one can show the existence of global-in-time classical solutions to the porous medium equation, then the relaxation limit holds for all times.
We also refer to \cite{CCT19, CPW20, Cpre, HMP05, LT17} for the strong relaxation limits of compressible Euler/Euler--Poisson systems. 
\end{remark}

Our second result provides the large-time behavior of solutions, obtained in Theorem \ref{main_thm}, to system \eqref{np_ER} showing the algebraic or exponential decay rates of convergence of solutions in $X^m$ when $\Omega = \R^d$ or $\T^d$.
\begin{theorem}\label{main_thm2} Let $d \geq 2$ and  the assumptions of Theorem \ref{main_thm} be satisfied. 
\begin{enumerate}
\item[(i)] (Whole space case): 
If we additionally assume that  
\bq\label{as_lt}
(h_0, u_0) \in \dot{H}^{-s-\frac{d-\alpha}{2}}(\R^d) \times [\dot{H}^{-s}(\R^d)]^d
\eq
for some 
\bq\label{as_lt2}
s \in \lt[1 - \frac{d-\alpha}{2}, \,\frac\alpha2 \rt],
\eq
then we have
\[
\|(h,u)(\cdot,t)\|_{X^m}^2 + \|h\|_{\dot{H}^{-1+\frac{d-\alpha}{2}}}^2 + \|u\|_{\dot{H}^{d-\alpha-1}}^2 \le C(1+t)^{-\eta}, \quad t \ge 0, \quad .
\]
where $\eta > 0$ is given by
\[
\eta := \min\lt\{\frac{2s}{d-\alpha}, \frac{s+d-\alpha-1}{1-\frac{d-\alpha}{2}}\rt\}.
\]
Furthermore, if the order $s > 0$ is large enough such that $(\frac\alpha2 \geq )s>2+d-\alpha$, then we have the exponential decay rate of convergence:
\[
\|(h,u)(\cdot,t)\|_{X^m}^2 \le Ce^{-\zeta t}, \quad  t \ge 0,
\]
for some positive constants $C$ and $\zeta$ independent of $t$.

\item[(ii)] (Periodic case): There exist positive constants $C$ and $\lambda$ independent of $t$ such that 
\[
\|(h,u)(\cdot,t)\|_{X^m}  \le Ce^{-\lambda t}, \quad t \ge 0.
\]
\end{enumerate}
\end{theorem}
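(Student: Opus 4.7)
My plan is to combine the energy/hypocoercivity estimates underlying Theorem~\ref{main_thm} with a propagation of negative Sobolev norms and a Gagliardo--Nirenberg--Sobolev interpolation, in the spirit of the Guo--Wang scheme for dissipative hyperbolic systems. The starting point is to upgrade the energy inequality from the global-existence proof to a Lyapunov-type bound
\[
\frac{d}{dt}\mathcal{L}_m(t) + \mathcal{D}_m(t) \leq 0,
\]
where $\mathcal{L}_m(t)$ is equivalent to $\|(h,u)(\cdot,t)\|_{X^m}^2$ and the dissipation $\mathcal{D}_m(t)$ controls $\|u\|_{H^{m+(d-\alpha)/2}}^2$ (from the linear damping) together with a family of fractional derivatives of $h$ produced by a hypocoercivity correction. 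Concretely, I would add to the natural energy a small multiple of $\sum_{k\leq m-1}\int \Lambda^k u\cdot\nabla\Lambda^{\alpha-d+k}h\,dx$; differentiating in time along the linearized system produces dissipative terms of the form $\|h\|_{\dot H^{k+1-(d-\alpha)}}^2$ at each level $k$, the cross and error terms being absorbable by the velocity dissipation thanks to the smallness granted by Theorem~\ref{main_thm}. A careful bookkeeping of these contributions is what yields the additional controls $\|h\|_{\dot H^{-1+(d-\alpha)/2}}^2$ and $\|u\|_{\dot H^{d-\alpha-1}}^2$ appearing on the left-hand side of part~(i).

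For the periodic case (ii), the argument closes rapidly. Using the neutrality condition $\int_{\T^d} h\,dx = 0$, Poincar\'e's inequality gives $\|h\|_{H^m}\lesssim\|\nabla h\|_{H^{m-1}}$ and also bounds $\|h\|_{\dot H^{-(d-\alpha)/2}}$ by a low-order dissipative norm of $h$; combined with the damping dissipation for $u$, this yields $\mathcal{L}_m(t)\leq C\mathcal{D}_m(t)$, and a standard Gr\"onwall estimate produces the claimed exponential decay.

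For the whole-space algebraic decay in (i), the key additional step is to propagate the homogeneous negative norms $\|h(t)\|_{\dot H^{-s-(d-\alpha)/2}}$ and $\|u(t)\|_{\dot H^{-s}}$ uniformly in time. I would test the equations \eqref{np_ER2} against $\Lambda^{-2s-(d-\alpha)}h$ and $\Lambda^{-2s}u$ respectively, and estimate the quadratic nonlinearities $\nabla\cdot(hu)$ and $u\cdot\nabla u$ via Kato--Ponce-type commutators together with the Hardy--Littlewood--Sobolev and Sobolev embeddings $\dot H^{-s}\supset L^{2d/(d+2s)}$ and $H^{m+(d-\alpha)/2}\hookrightarrow L^\infty$. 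The window $s\in[1-(d-\alpha)/2,\alpha/2]$ is precisely the one for which all such bilinear terms close using norms already controlled by Theorem~\ref{main_thm}, giving
\[
\|h(t)\|_{\dot H^{-s-(d-\alpha)/2}}^2+\|u(t)\|_{\dot H^{-s}}^2 \leq C_0, \qquad t\geq 0.
\]
Interpolating every homogeneous norm present in $\mathcal{L}_m$ between this propagated anchor and a higher-order norm inside $\mathcal{D}_m$ then yields
\[
\mathcal{L}_m(t)^{1+1/\eta} \leq C\,\mathcal{D}_m(t),
\]
after which the elementary ODE comparison furnishes $\mathcal{L}_m(t)\leq C(1+t)^{-\eta}$. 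The two candidates in $\eta=\min\{2s/(d-\alpha),\,(s+d-\alpha-1)/(1-(d-\alpha)/2)\}$ correspond to the two critical interpolations: one governed by the velocity dissipation with anchor $\dot H^{-s}$, the other by the Riesz-hypocoercivity dissipation of $h$ with anchor $\dot H^{-s-(d-\alpha)/2}$. When $s>2+d-\alpha$, the anchor is strong enough that both interpolation exponents collapse to one, so every term in $\mathcal{L}_m$ is dominated linearly by $\mathcal{D}_m$ and the whole-space case reduces to the same Gr\"onwall argument as in the periodic setting, giving exponential decay.

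The most delicate step I expect is the propagation of the negative Sobolev norms: the admissible window for $s$ and the resulting $\eta$ are dictated by the scaling mismatch between the Riesz order $\alpha-d$ and the fractional shift $(d-\alpha)/2$ built into $X^m$, and one must verify that the commutator and product estimates for $\nabla\cdot(hu)$ and $u\cdot\nabla u$ close within the smallness regime provided by Theorem~\ref{main_thm}. A secondary difficulty is ensuring that the hypocoercivity-produced dissipation $\mathcal{D}_m$ contains a sufficiently complete family of fractional derivatives of $h$, both in order and in implicit constant, to support the interpolation at the stated exponents.
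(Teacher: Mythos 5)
Your plan correctly identifies the Guo--Wang strategy of propagating negative Sobolev norms and interpolating against the dissipation, and your propagation step matches Lemmas~\ref{neg_est}--\ref{neg_est2} and Proposition~\ref{C2.3}. But the interpolation by itself, with the dissipation $\|\nabla h\|_{H^{m-1}}^2 + \|u\|_{H^{m+\frac{d-\alpha}{2}}}^2$ coming out of the energy/hypocoercivity estimate \eqref{est_he}, only delivers the \emph{weaker} rate $\eta = s/(1+\frac{d-\alpha}{2})$ stated in Remark~\ref{rmk_decay}: one has to interpolate $\|h\|_{\dot H^{-\frac{d-\alpha}{2}}}$ all the way between $\dot H^{-s-\frac{d-\alpha}{2}}$ and $\dot H^1$. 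The stated rate $\eta = \min\{2s/(d-\alpha),\,(s+d-\alpha-1)/(1-\frac{d-\alpha}{2})\}$ requires a genuine $\|h\|_{L^2}^2$ dissipation at the level $k=0$, which your proposed hypocoercivity corrector $\sum_k\int \Lambda^k u\cdot\nabla\Lambda^{\alpha-d+k}h\,dx$ does not produce: differentiating it yields $h$-dissipation only at positive orders $\dot H^{1-(d-\alpha)+k}$. The missing ingredient is the additional cross term $-\eta_4\int_{\R^d} h\,\Lambda^{d-\alpha-2}\nabla\cdot u\,dx$, together with the lower-order quantities $\|\Lambda^{-1+\frac{d-\alpha}{2}}h\|_{L^2}^2 + \|\Lambda^{d-\alpha-1}u\|_{L^2}^2$ added to the Lyapunov functional. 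Differentiating the cross term and substituting $\partial_t u = -\nabla\Lambda^{\alpha-d}h - u - u\cdot\nabla u$ makes the Riesz operators cancel exactly, $\Lambda^{d-\alpha-2}\nabla\cdot\nabla\Lambda^{\alpha-d} = \mathrm{Id}$, producing the term $-\|h\|_{L^2}^2$. With this dissipation in hand the interpolation anchors at $L^2$ rather than $\dot H^1$ and the two candidates for $\eta$ come out.

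Your argument for the exponential rate on $\R^d$ when $s > 2+d-\alpha$ is incorrect as stated: on the whole space the spectrum of $\Lambda$ extends to zero, so interpolating between a bounded negative-order norm and a dissipative positive-order norm can never ``collapse to a linear'' Poincar\'e inequality $\mathcal{L}_m\lesssim\mathcal{D}_m$, regardless of how negative $-s$ is; the exponent $1+\zeta$ with $\zeta = \max\{(d-\alpha)/(2s),\,\dots\}$ is strictly $>1$ for every finite $s$. The actual mechanism in the paper is different: one abandons the full $\mathcal{L}_m$ and works with the restricted functional $\overline{\mathcal{Y}}^m$ in \eqref{def_y2}, which excludes the undamped zeroth-order terms and therefore obeys a clean linear Gr\"onwall inequality up to a cubic error $C\|u\|_{L^2}^3$. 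The uniform bound on $\|u\|_{\dot H^{-s}}$ from Proposition~\ref{C2.3} then gives, by Gagliardo--Nirenberg, $\|u\|_{L^2}^3\lesssim \e_0^{\frac{3s}{1+\frac{d-\alpha}{2}+s}-2}\|U_1\|_{L^2}^2$, and the exponent is nonnegative precisely when $s \geq 2+d-\alpha$; the cubic error is then absorbed and exponential decay of $\overline{\mathcal{Y}}^m$ follows. This is a qualitatively different argument from ``the interpolation exponent becomes one.''

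For part (ii), your Poincar\'e argument on $\T^d$ is a valid and in fact more elementary route than the paper's: the zero-mean condition bounds $\|h\|_{L^2}$ and $\|h\|_{\dot H^{-\frac{d-\alpha}{2}}}$ by $\|\nabla h\|_{L^2}$, the damping dissipation recovers the full $u$-norm, and a linear Gr\"onwall inequality closes in the smallness regime of Theorem~\ref{main_thm}. The paper instead constructs a modulated energy involving $(1+h)|u-m_c|^2$ and the Newtonian potential $W$; this is heavier machinery, but it yields the decay of the lowest-order norm $\|u\|_{L^2}+\|h\|_{\dot H^{-\frac{d-\alpha}{2}}}$ without any smallness assumption (Remark~\ref{rmk_lt}), which your approach does not. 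Both establish the theorem as stated.
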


\begin{remark} The condition \eqref{as_lt2} naturally requires  $d \ge 2$. 
\end{remark}

\begin{remark}\label{rmk_decay} The assumption \eqref{as_lt2} and the dimension restriction $d \geq 2$ can be relaxed in the whole space case.  Indeed, if we only assume \eqref{as_lt} for some $s \in (0, \alpha/2]$, then we have
\bq\label{decay_weak}
\|(h,u)(\cdot,t)\|_{X^m}^2  \le C(1+t)^{-\frac{s}{1+\frac{d-\alpha}{2}}}, \quad t \ge 0,
\eq
where $C$ is a positive constant independent of $t$. Note that when $s+\frac{d-\alpha}{2} <1$, the decay rate of convergence for the whole space case is at most $(1+t)^{-\frac{1-\frac{d-\alpha}{2}}{1+\frac{d-\alpha}{2}}} = (1+t)^{-1+\e}$ for some constant $\e \in (0,1)$. In this case, even though the order $s > 0$ is only assumed to be positive, the decay rate does not depend on the dimension $d$, however this decay estimate provide a good decay estimate for the one dimensional case. On the other hand, when $s=\frac\alpha2$, i.e. \eqref{as_lt2} holds, the decay rate becomes $(1+t)^{-\min\lt\{\frac{\alpha}{d-\alpha}, \frac{d-\frac\alpha2-1}{1-\frac{d-\alpha}{2}}\rt\}}$ and it becomes $(1+t)^{-(d-1)}$ if $\alpha=d-1$. This shows that we have a better decay rate of convergence in higher dimensions.
\end{remark}

\begin{remark}\label{rmk_lt} For the periodic domain case, if we are only interested in the large-time behavior of the lowest order norm, i.e. $\|u\|_{L^2}+\|h\|_{\dot{H}^{-\frac{d-\alpha}{2}}}$, then the smallness assumption on the solutions is not necessarily required. More precisely, if we assume
\begin{enumerate}
\item[(i)]
$\inf\limits_{(x,t)\in \T^d \times \R_+} 1 + h(x,t) \ge h_{min}>0$,
\item[(ii)]
$h \in W^{1,\infty}(\T^d \times \R_+)$, $\nabla u \in L^\infty(\R_+;[L^{\infty}(\T^d)]^d)$,
\end{enumerate}
then we have
\[
\|u(\cdot,t)\|_{L^2}+\|h(\cdot,t)\|_{\dot{H}^{-\frac{d-\alpha}{2}}} \le Ce^{-\lambda t}.
\]
Here $C$ and $\lambda$ are positive constants independent of $t$. In fact, the above estimate plays a crucial role in establishing the exponential decay rate of convergence of $\|(h,u)(\cdot,t)\|_{X^m}^2$, see Proposition \ref{main_prop} below. 
\end{remark}

\begin{remark}All the results in Theorems \ref{main_thm} and \ref{main_thm2} can be readily extended to the Coulomb interaction case, i.e. the system \eqref{np_ER2} with $\alpha = d-2$. In particular, if $d > 6$ and \eqref{as_lt} holds with $s \in (4,d-2)$, we have the exponential decay rate of convergence of solutions for the system \eqref{np_ER2} with $\alpha = d-2$, i.e. pressureless damped Euler--Poisson system, even in the whole space. 
\end{remark}

For the whole space case, as stated in Theorem \ref{main_thm2}, we take into account the negative Sobolev space of solutions. The negative Sobolev norm is first used in \cite{GW12} for the estimates on the optimal time decay rates of convergence of solutions to the dissipative equations in the whole space. As mentioned above, we were able to show that the hypocoercivity-type estimate produces the dissipation rate for $h$, however, it does not give the lower-order norm for $h$. For this, we find a proper negative order of derivative of solutions that closes the estimates of Sobolev negative norms, and thus the algebraic decay rate of convergence of solutions is established. We would also like to emphasize that the exponential decay rate is found when the negative order is sufficiently large, which subsequently requires $d \geq 1$ large enough, in the whole space. On the other hand, for the periodic domain case, we suitably use  the monotonicity of the negative Sobolev norms and construct a modulated energy for the system \eqref{np_ER2}. More precisely, the modulated energy is equivalent to the lowest order norm of solutions, $\|u\|_{L^2}+\|h\|_{\dot{H}^{-\frac{d-\alpha}{2}}}$, and this decays to zero exponentially fast as time goes to infinity. This strategy does not require any further integrability in the negative Sobolev space and as stated in Remark \ref{rmk_lt} any smallness assumption on solutions is not needed. This decay estimate on lower-order norm of solutions together with the energy estimate established in the proof of Theorem \ref{main_thm} yields the exponential decay rate of convergence of solutions in $X^m$ norm.

Throughout this paper, we denote by $C$ a generic positive constant which may differ from line to line, $C = C(\alpha,\beta,\dots)$ represents the positive constant depending on $\alpha,\beta,\dots$. $f \ls g$ and $f \approx g$ mean that there exists a positive constant $C>0$ such that $f \leq Cg$ and $C^{-1} f \leq  g \leq Cf$, respectively. $f \ls_{\alpha,\beta,\dots} g$ denotes $f \leq C(\alpha,\beta,\dots) g$ for some constant $C(\alpha,\beta,\dots) > 0$. $\pa^k$ denotes any partial derivative of order $k$.

The rest of this paper is organized as follows. In Section \ref{sec_pre}, we introduce several auxiliary lemmas regarding the commutator estimates and Sobolev embeddings. These estimates will very often be used throughout the paper. Section \ref{sec_ext} is devoted to provide the details on the proof of our first main theorem, Theorem \ref{main_thm}. Since the local well-posedness is by now classical, we mainly discuss the {\it a priori} estimates of solutions in the proposed solution space. This yields that the local-in-time solutions can be extended to the global-in-time one. Finally, in Section \ref{sec_lt}, we study the large-time behavior of classical solutions. 

%
%
%
%

\section{Preliminaries}\label{sec_pre}
In this section, we provide various technical lemmas that will be significantly used  throughout the paper.

We first recall from \cite{CJpre} the commutator estimate.
\begin{lemma}\cite{CJpre}\label{comm_est2}
Let $s \ge 0$. For a vector field $v \in (H^{\frac{d}{2} + 1 + s + \e}(\R^d))^d$ and $f\in H^s(\R^d)$, we have 
\[
\|[\Lambda^s, v \cdot \nabla]f\|_{L^2} \lesssim_{s,d,\e} \|v\|_{H^{\frac{d}{2} + 1 + s +\e}} \|f\|_{H^s}.
\]
\end{lemma}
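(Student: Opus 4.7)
The plan is to work directly in Fourier variables on $\R^d$. Writing the commutator as a Fourier multiplier,
\[
\widehat{[\Lambda^s, v\cdot\nabla]f}(\xi) = i \sum_{j=1}^d \int_{\R^d} \hat v_j(\xi-\eta)\bigl(|\xi|^s-|\eta|^s\bigr)\eta_j\,\hat f(\eta)\,d\eta,
\]
so it suffices to estimate the absolute value of the kernel $(|\xi|^s-|\eta|^s)\eta_j$ and then apply Young's convolution inequality in $L^2_\xi$. The case $s=0$ is trivial since $\Lambda^0$ is the identity, so I would assume $s>0$ in what follows.

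Next I would split the $\eta$-integration into the two regions $A_1=\{|\xi-\eta|\le|\eta|/2\}$ and $A_2=\{|\xi-\eta|>|\eta|/2\}$. On $A_1$ we have $|\xi|\approx|\eta|$, and the mean-value-type estimate
\[
\bigl||\xi|^s-|\eta|^s\bigr|\lesssim_{s} |\xi-\eta|\,\max(|\xi|,|\eta|)^{s-1}
\]
controls the integrand by $|\hat v(\xi-\eta)|\,|\xi-\eta|\cdot|\eta|^s|\hat f(\eta)|$. Writing this as a convolution and applying $\|g*h\|_{L^2}\le\|g\|_{L^1}\|h\|_{L^2}$ with $g(\eta)=|\eta||\hat v(\eta)|$ and $h(\eta)=|\eta|^s|\hat f(\eta)|$ gives a bound of the form $\|g\|_{L^1}\|f\|_{\dot{H}^s}$. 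On $A_2$ we have $|\xi|,|\eta|\lesssim|\xi-\eta|$, so $||\xi|^s-|\eta|^s|\le|\xi|^s+|\eta|^s\lesssim|\xi-\eta|^s$ and the integrand is bounded by $|\hat v(\xi-\eta)|\,|\xi-\eta|^{s+1}|\hat f(\eta)|$; Young's inequality then yields $\|\tilde g\|_{L^1}\|f\|_{L^2}$ with $\tilde g(\eta)=|\eta|^{s+1}|\hat v(\eta)|$.

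It remains to control the $L^1$-norms $\|g\|_{L^1}$ and $\|\tilde g\|_{L^1}$. Both follow from a weighted Cauchy--Schwarz estimate
\[
\int|\eta|^k|\hat v(\eta)|\,d\eta\le\left(\int\frac{|\eta|^{2k}}{(1+|\eta|)^{d+2k+2\e}}\,d\eta\right)^{1/2}\|v\|_{H^{d/2+k+\e}},
\]
whose first factor is finite for any $\e>0$. Taking $k=1$ on $A_1$ and $k=s+1$ on $A_2$, and noting $\|v\|_{H^{d/2+1+\e}}\le\|v\|_{H^{d/2+1+s+\e}}$ for $s\ge 0$, both contributions collapse into the advertised estimate $\|v\|_{H^{d/2+1+s+\e}}\|f\|_{H^s}$. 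The one delicate ingredient is the kernel bound in the near-diagonal region $A_1$, where the mean-value inequality replaces the apparent $|\eta|$-loss from $\nabla f$ by $|\xi-\eta|$-regularity absorbed into $\hat v$; once that trade is in place, the rest is bookkeeping with Young's inequality. The same argument transfers to $\T^d$ by replacing Fourier integrals with Fourier series.
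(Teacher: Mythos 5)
The paper states this lemma by citing \cite{CJpre} and does not reproduce a proof, so there is no in-text argument to compare against. Your Fourier-side derivation is correct and standard: the near/far dichotomy in $\eta$, Young's convolution inequality, and a weighted Cauchy--Schwarz bound on the $L^1$ norms of $|\eta|^k\hat v$ (with $k=1$ on $A_1$ and $k=s+1$ on $A_2$) assemble to yield exactly the Sobolev exponent $\frac{d}{2}+1+s+\e$. One small imprecision worth flagging: the inequality $\bigl||\xi|^s-|\eta|^s\bigr|\lesssim_s|\xi-\eta|\,\max(|\xi|,|\eta|)^{s-1}$ is \emph{not} true globally when $0<s<1$ (concavity of $r\mapsto r^s$ makes the mean-value factor $\min(|\xi|,|\eta|)^{s-1}$, which is the larger of the two when $s<1$); however, on $A_1$ you have $|\xi|\approx|\eta|$, so $\min\approx\max$ and the bound is valid in the only place you invoke it. Stating it as a local estimate on $A_1$ rather than a general kernel bound would make the write-up airtight.
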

We next present several results on the Gagliardo--Nirenberg interpolation inequalities and Moser-type inequalities. 
\begin{proposition}\label{ineqs}
We have the following relations:
\begin{enumerate}
\item[(i)]
If $f\in H^{s_2}(\om)$ and $0\le s_1 \le s_2$,
\[\|f\|_{H^{s_1}} \le \|f\|_{L^2}^{\frac{s_2-s_1}{s_2}} \|f\|_{H^{s_2}}^{\frac{s_1}{s_2}}.\]
\item[(ii)]
If $s_2, s_3 \ge 0$ and $0 \le s_1 \le \min\{s_2, s_3, s_2+s_3-d/2\}$,
\[
\|fg\|_{H^{s_1}} \lesssim_{d,s_1, s_2, s_3} \|f\|_{H^{s_2}} \|g\|_{H^{s_3}}.
\]
\item[(iii)]
If $ j, \ell \in \N$ with $0 \le j \le \ell$ and $f \in H^\ell(\om)$, 
\[\|\nabla^j f\|_{L^2} \lesssim_{d,j,\ell} \|\nabla^\ell f\|_{L^2}^{\frac{j}{\ell}}\|f\|_{L^2}^{1-\frac{j}{\ell}}.\] 
\item[(iv)]
(Moser-type inequality)
If $f,g \in (H^k \cap L^\infty)(\om)$,
\[\|\pa^k(fg)\|_{L^2} \lesssim_{d,k} \|f\|_{L^\infty} \|\pa^k g\|_{L^2} + \|g\|_{L^\infty}\|\pa^k f\|_{L^2}.\]
Moreover, if $\nabla g \in L^\infty(\om)$,
\[\|\pa^k(fg) - (\pa^k f) g\|_{L^2} \lesssim_{d,k} \|f\|_{L^\infty} \|\pa^k g\|_{L^2} + \|\nabla g\|_{L^\infty}\|\pa^{k-1} f\|_{L^2}.\]
In addition, if $f,g \in H^k (\om)$ with $\nabla f, \nabla g \in L^\infty(\om)$,
\[\|\pa^k(fg) - (\pa^k f) g - f (\pa^k g)\|_{L^2} \lesssim_{d,k} \|\nabla f\|_{L^\infty} \|\pa^{k-1} g\|_{L^2} + \|\nabla g\|_{L^\infty}\|\pa^{k-1} f\|_{L^2}.\]
\end{enumerate}
\end{proposition}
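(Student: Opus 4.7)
\smallskip

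\noindent\textbf{Proof proposal.} All four statements are by now classical, so the plan is to assemble them efficiently from Fourier/Littlewood--Paley considerations together with the Leibniz rule. I will work on $\R^d$ (the $\T^d$ case is identical after replacing the Fourier integral by a Fourier series). For (i), the idea is purely Hölder on the Fourier side: writing $\|f\|_{H^{s_1}}^2 = \int (1+|\xi|^2)^{s_1}|\hat f(\xi)|^2\,d\xi$ and splitting the integrand as
\[
(1+|\xi|^2)^{s_1}|\hat f|^2 = \bigl[|\hat f|^2\bigr]^{\frac{s_2-s_1}{s_2}} \bigl[(1+|\xi|^2)^{s_2}|\hat f|^2\bigr]^{\frac{s_1}{s_2}},
\]
the Hölder inequality with exponents $\frac{s_2}{s_2-s_1}$ and $\frac{s_2}{s_1}$ delivers the claim immediately.

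For (ii), the fastest route is the Littlewood--Paley decomposition $f=\sum_j \Delta_j f$, $g=\sum_k \Delta_k g$, and Bony's paraproduct decomposition into low-high, high-low, and high-high parts. Each piece is estimated by the standard bound $\|\Delta_j f\|_{L^\infty}\lesssim 2^{jd/2}\|\Delta_j f\|_{L^2}$; the two restrictions $s_1\le\min\{s_2,s_3\}$ and $s_1\le s_2+s_3-d/2$ are exactly what is needed to sum the resulting geometric series in $j,k$. Equivalently, one can argue directly on the Fourier side by splitting $\widehat{fg}=\hat f*\hat g$ according to whether $|\eta|\le|\xi-\eta|$ or $|\eta|\ge|\xi-\eta|$, bounding the corresponding pieces by Cauchy--Schwarz after using $(1+|\xi|^2)^{s_1}\lesssim (1+|\eta|^2)^{s_1}+(1+|\xi-\eta|^2)^{s_1}$; the assumption $s_2+s_3>s_1+d/2$ (which is $s_1\le s_2+s_3-d/2$) makes the relevant weighted $L^1$ convolution factor finite.

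For (iii), the cleanest derivation is again Fourier-based: $\|\nabla^j f\|_{L^2}^2\le \int|\xi|^{2j}|\hat f|^2\,d\xi$ and Hölder in $\xi$ with exponents $\ell/j$ and $\ell/(\ell-j)$ yield the geometric interpolation. An elementary alternative is the iterated integration by parts bound $\|\nabla^j f\|_{L^2}^2\le\|\nabla^{j-1}f\|_{L^2}\|\nabla^{j+1}f\|_{L^2}$, which bootstraps to the general statement by induction.

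For the Moser inequalities in (iv), the plan is Leibniz plus (iii). Expanding $\partial^k(fg)=\sum_{|\beta|\le k}\binom{k}{\beta}\partial^{k-\beta}f\,\partial^\beta g$, each mixed term is controlled by Hölder as $\|\partial^{k-\beta}f\,\partial^\beta g\|_{L^2}\lesssim \|\partial^{k-\beta}f\|_{L^{p}}\|\partial^\beta g\|_{L^{q}}$ with $1/p+1/q=1/2$, and (iii) interpolates each factor between the $L^\infty$ bound and the top-order $L^2$ bound; the exponents match up so that all terms are dominated by $\|f\|_{L^\infty}\|\partial^k g\|_{L^2}+\|g\|_{L^\infty}\|\partial^k f\|_{L^2}$. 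For the second (commutator) inequality, the $\beta=0$ term is cancelled, so every surviving term has at least one derivative on $g$; replacing the $\|g\|_{L^\infty}$ slot by $\|\nabla g\|_{L^\infty}$ and absorbing the extra derivative into the $g$-factor produces the stated bound. The third inequality follows in the same way after cancelling both endpoint terms $\beta=0$ and $\beta=k$, which forces every surviving term to carry at least one derivative on each factor. The main obstacle is the bookkeeping: the interpolation exponents in (iii) must be arranged so that every mixed Hölder product collapses to the two desired extremes, which I expect to be the only genuinely delicate step.
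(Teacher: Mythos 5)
The paper states this proposition as a compendium of classical facts and gives no proof, so there is nothing to compare against in the text itself. Judged on its own, your outline is essentially the standard textbook route for each item; parts (i) and (iii) via Hölder on the Fourier side are correct and complete. The paraproduct sketch for (ii) is the right approach, though you should be aware that the endpoint case $s_1 = s_2+s_3-d/2$ can fail without an additional strictness hypothesis (for instance $s_2 = d/2$, $s_3 = s_1 = 0$ would demand $H^{d/2}\hookrightarrow L^\infty$); the paper's statement is silently non-sharp here, and your parenthetical identification of the strict and non-strict inequalities inherits that imprecision.

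The one genuine gap is in (iv). You propose to control each term $\|\partial^{k-\beta}f\,\partial^{\beta}g\|_{L^2}$ by Hölder followed by ``(iii) interpolates each factor between the $L^\infty$ bound and the top-order $L^2$ bound,'' but (iii) only interpolates $L^2$ norms of derivatives against $L^2$ norms, i.e.\ $\|\nabla^j f\|_{L^2}\lesssim\|\nabla^\ell f\|_{L^2}^{j/\ell}\|f\|_{L^2}^{1-j/\ell}$. After Hölder with exponents $p=2k/j$ and $q=2k/(k-j)$, what you actually need is the $L^p$ Gagliardo--Nirenberg inequality
\[
\|\nabla^{j} f\|_{L^{2k/j}} \lesssim \|f\|_{L^\infty}^{1-j/k}\,\|\nabla^{k} f\|_{L^2}^{j/k},
\]
and its mate for $g$. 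That inequality is not a consequence of (iii) as stated; it is a separate instance of Gagliardo--Nirenberg with an $L^\infty$ endpoint. Once this is supplied, the bookkeeping you describe (Young's inequality $a^\theta b^{1-\theta}\le\theta a+(1-\theta)b$ to recombine the products into the two extreme terms) does close the argument for all three inequalities, with the cancellations of the $\beta=0$ and/or $\beta=k$ terms producing the $\nabla f$, $\nabla g$ placeholders as you indicate. So the structure is right, but you should replace the invocation of (iii) by the correct $L^\infty$--$L^2$ interpolation inequality or the argument does not go through as written.
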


We finally show the total energy estimate of the system \eqref{np_ER} whose proof can be readily obtained.
\begin{proposition}\label{zeroth}
For $T>0$, let $(\rho, u)$ be a classical solution to \eqref{np_ER} on $[0,T]$. Then we have
\[
\frac12 \frac{d}{dt}\lt( \int_\om \rho |u|^2\,dx + \int_\om (\rho-1)\Lambda^{\alpha -d}(\rho-1)\,dx \rt) + \int_\om \rho |u|^2\,dx = 0.
\]
\end{proposition}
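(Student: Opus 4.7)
The plan is the standard fluid-energy computation, carried out carefully to handle the nonlocal Riesz term.

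First, I would rewrite the momentum equation in non-conservative form. Using the continuity equation, one checks
\[
\pa_t(\rho u) + \nabla\cdot(\rho u \otimes u) = \rho\bigl(\pa_t u + u\cdot\nabla u\bigr),
\]
so (with $\gamma=\lambda=c=1$) the system is equivalent to
\[
\rho(\pa_t u + u\cdot\nabla u) = -\rho u - \rho\,\nabla\Lambda^{\alpha-d}(\rho-1).
\]
Taking the $L^2$ inner product of this identity with $u$ and integrating, the left-hand side can be massaged into a total time derivative: combining $\int \rho\, u\cdot\pa_t u\,dx = \int \rho\,\pa_t\tfrac{|u|^2}{2}\,dx$ with $\int \rho\, u\cdot(u\cdot\nabla u)\,dx = \int \rho\,u\cdot\nabla\tfrac{|u|^2}{2}\,dx$ and invoking the continuity equation $\pa_t\rho + \nabla\cdot(\rho u)=0$, one obtains
\[
\int_\om \rho\, u\cdot(\pa_t u + u\cdot\nabla u)\,dx = \frac{d}{dt}\int_\om \frac12 \rho |u|^2\,dx.
\]

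Next, I would handle the Riesz contribution. Integration by parts gives
\[
-\int_\om \rho\, u\cdot\nabla\Lambda^{\alpha-d}(\rho-1)\,dx = \int_\om \nabla\cdot(\rho u)\,\Lambda^{\alpha-d}(\rho-1)\,dx,
\]
where the neutrality condition $\int_\om(\rho-1)\,dx=0$ ensures the Riesz potential is well-defined (so no boundary/constant issue arises on either $\R^d$ or $\T^d$). Applying the continuity equation gives $\nabla\cdot(\rho u) = -\pa_t\rho = -\pa_t(\rho-1)$. Using the self-adjointness of $\Lambda^{\alpha-d}$ in $L^2$,
\[
\int_\om \pa_t(\rho-1)\,\Lambda^{\alpha-d}(\rho-1)\,dx = \frac12\frac{d}{dt}\int_\om (\rho-1)\,\Lambda^{\alpha-d}(\rho-1)\,dx.
\]

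Combining these two computations with the dissipation contribution $-\int_\om \rho|u|^2\,dx$ from the damping, the identity
\[
\frac{d}{dt}\int_\om \frac12\rho|u|^2\,dx = -\int_\om \rho|u|^2\,dx - \frac12\frac{d}{dt}\int_\om(\rho-1)\Lambda^{\alpha-d}(\rho-1)\,dx
\]
rearranges to exactly the stated conclusion. No serious obstacle arises here: the computation is purely a bookkeeping exercise, and the only subtlety is keeping track of the neutrality condition so that the Riesz term has meaning and integration by parts is justified; on the torus this is exactly the zero-mean requirement, while on $\R^d$ it ensures the decay of $\Lambda^{\alpha-d}(\rho-1)$ at infinity.
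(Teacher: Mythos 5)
Your proof is correct and is exactly the standard energy computation the authors have in mind; the paper omits the proof of Proposition \ref{zeroth} as ``readily obtained,'' and your argument (passing to the non-conservative form, pairing with $u$, using the continuity equation on the kinetic term, and then using self-adjointness of $\Lambda^{\alpha-d}$ together with $\nabla\cdot(\rho u)=-\pa_t(\rho-1)$ on the Riesz term) is precisely that computation. Nothing to add.
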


%
%
%
%

\section{Global well-posedness for the damped pressureless Euler--Riesz system}\label{sec_ext}
In this section, we present the proof of Theorem \ref{main_thm}.  Although our proof mostly considers the case $\om = \R^d$, similar arguments can be used for the case $\om =\T^d$. 

\subsection{Local well-posedness}

Note that the local-in-time existence and uniqueness of strong solutions can be deduced from \cite[Theorem 3.1]{CJpre}. Strictly speaking in \cite{CJpre}, the local well-posedness theory is studied in the case that $\rho$ is integrable in $\Omega$, however the proof can be readily extended to our case. Thus we present the following theorem without providing any details on its proof.

\begin{theorem}\label{thm_local} Let the same assumptions as in Theorem \ref{main_thm} be verified. Then for any positive constants $\e_1 < M_0$, there exists a positive constant $T_0$ depending only on $\e_1$ and $M_0$ such that if $\|(h_0, u_0)\|_{X^m}< \e_1$, then the system \eqref{np_ER2} admits a unique solution $(h,u) \in \mc([0,T); X^m)$ satisfying
\[
\sup_{0 \leq t \leq T_0} \|(h,u)\|_{X^m} \leq M_0.
\]
\end{theorem}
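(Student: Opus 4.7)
The plan is to follow the Picard iteration scheme of \cite{CJpre}, with the minor modifications needed to accommodate the perturbation variable $h = \rho - 1$ and the nonzero constant background. Starting from $(h^0, u^0) \equiv (h_0, u_0)$, I would construct iterates $(h^{n+1}, u^{n+1})$ as the unique classical solutions of the linearized system
\begin{align*}
&\pa_t h^{n+1} + u^n\cdot\nabla h^{n+1} + (1+h^n)\nabla\cdot u^{n+1} = 0,\\
&\pa_t u^{n+1} + (u^n\cdot\nabla) u^{n+1} + u^{n+1} + \nabla\Lambda^{\alpha-d} h^{n+1} = 0,
\end{align*}
with initial data $(h_0, u_0)$ at each step. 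Each step is a linear transport pair with a sub-principal Riesz source (of order $\alpha - d + 1 < 1$), solvable on a short interval by classical methods since $u^n \in H^{m+(d-\alpha)/2}$ with $m > d/2 + 2$ embeds in $W^{1,\infty}$.

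The central input is a uniform $X^m$ a priori bound propagating from one iterate to the next. Applying $\Lambda^m$ and $\Lambda^{m+(d-\alpha)/2}$ to the two equations respectively and pairing with the matching iterates, the two principal cross terms
\[
\int \Lambda^m h^{n+1}\,\Lambda^m(\nabla\cdot u^{n+1})\,dx \quad\text{and}\quad \int \Lambda^{m+(d-\alpha)/2} u^{n+1}\cdot \Lambda^{m+(d-\alpha)/2}\nabla\Lambda^{\alpha-d} h^{n+1}\,dx
\]
cancel exactly after integration by parts and redistribution of the weight $\Lambda^{(d-\alpha)/2}$ by self-adjointness (the second integral equals $-\int \Lambda^m h^{n+1}\,\Lambda^m(\nabla\cdot u^{n+1})\,dx$). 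This is precisely the reason $X^m$ places $u$ at $(d-\alpha)/2$ higher regularity than $h$. The linear damping contributes the dissipation $\|\Lambda^{m+(d-\alpha)/2} u^{n+1}\|_{L^2}^2$ on the left-hand side, while the negative-order piece $\|h^{n+1}\|_{\dot{H}^{-(d-\alpha)/2}}^2$ is propagated separately via a linearized version of Proposition \ref{zeroth}, obtained by pairing the $h^{n+1}$-equation with $\Lambda^{\alpha-d} h^{n+1}$ and the $u^{n+1}$-equation with $u^{n+1}$. The nonlinear remainders (the transport commutators $[\Lambda^s, u^n\cdot\nabla]$ and the Moser-type products $h^n \nabla\cdot u^{n+1}$) are controlled by Lemma \ref{comm_est2} and Proposition \ref{ineqs}(ii),(iv), using the embedding $H^{m-1}\hookrightarrow L^\infty$ afforded by $m > d/2 + 2$. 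A bootstrap of the resulting differential inequality of Gronwall type closes the bound $\sup_{[0,T_0]}\|(h^n, u^n)\|_{X^m} \le M_0$ uniformly in $n$ for $T_0 = T_0(\e_1, M_0)$ small enough.

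Convergence is obtained by running the same energy procedure on the differences $(h^{n+1}-h^n, u^{n+1}-u^n)$ in a weaker norm (e.g., $X^{m-1}$), which yields a contraction after possibly shrinking $T_0$ further. A standard Bona--Smith approximation then upgrades the strong limit in $L^\infty_t X^{m-1}$ and weak-$*$ limit in $L^\infty_t X^m$ to $(h,u)\in\mc([0,T_0); X^m)$, and uniqueness follows from the same difference estimate. The positivity constraint $\inf(1+h) > 0$ is maintained on $[0,T_0]$ since $\|h\|_{L^\infty} \ls \|h\|_{H^m}$ remains below the required threshold by the smallness of $\e_1$ and the bound $\|h\|_{H^m} \le M_0$.

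The main obstacle I anticipate is the simultaneous propagation of all three components of $\|\cdot\|_{X^m}$ through the iteration: the exact cancellation at principal order is available only if $h$ and $u$ are placed at the matched fractional regularities dictated by $X^m$, and the lower-order commutator remainders produced by Lemma \ref{comm_est2} couple $\|h\|_{H^m}$, $\|u\|_{H^{m+(d-\alpha)/2}}$, and the negative piece $\|h\|_{\dot{H}^{-(d-\alpha)/2}}$ in a fractional, genuinely nonlocal way. Careful rewriting of the nonlinear terms via the Moser-type estimates in Proposition \ref{ineqs}(iv), together with absorbing part of the loss into the damping dissipation rather than treating it as a pure commutator, is what keeps the regularity demand on the transport coefficient $u^n$ within the available $H^{m+(d-\alpha)/2}$ scale.
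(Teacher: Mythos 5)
The paper itself does not prove Theorem~\ref{thm_local}: it is stated as a direct consequence of \cite[Theorem 3.1]{CJpre}, with only the remark that the argument "can be readily extended" to the nonzero background. Your Picard scheme is the right template for such a proof, but there is one step in your energy estimate that, as written, does not close, and it is precisely the step the paper's own \emph{a priori} analysis (Section~3.2) is built to circumvent.

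The issue is the claimed exact cancellation of the two principal cross terms. When you apply $\Lambda^m$ to your linearized continuity equation and pair with $\Lambda^m h^{n+1}$, the acoustic source contributes
\[
\int \Lambda^m\bigl((1+h^n)\,\nabla\cdot u^{n+1}\bigr)\,\Lambda^m h^{n+1}\,dx,
\]
\emph{not} $\int \Lambda^m(\nabla\cdot u^{n+1})\,\Lambda^m h^{n+1}\,dx$. Only the "$1$" part of the coefficient $(1+h^n)$ cancels against the Riesz term from the $u$-equation (your self-adjointness computation is correct for that piece). The leftover principal contribution is $\int h^n\,\Lambda^m(\nabla\cdot u^{n+1})\,\Lambda^m h^{n+1}\,dx$, which involves $\Lambda^{m+1}u^{n+1}$. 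Since $m+1 > m+\tfrac{d-\alpha}{2}$ for $\alpha > d-2$, this factor is not controlled by $\|u^{n+1}\|_{H^{m+(d-\alpha)/2}}$, so it cannot be absorbed into the damping dissipation $\|\Lambda^{m+(d-\alpha)/2}u^{n+1}\|_{L^2}^2$ either. Moser-type estimates (Proposition~\ref{ineqs}(iv)) only help with the genuine remainder $\pa^k(h^n\nabla\cdot u^{n+1}) - h^n\pa^k(\nabla\cdot u^{n+1})$, not with this top-order piece. Integrating by parts simply transfers the loss onto $h^{n+1}$.

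The fix, which the paper uses throughout Section~3.2, is to test the continuity equation with $\frac{1}{1+h^n}\Lambda^m h^{n+1}$ rather than $\Lambda^m h^{n+1}$, i.e.\ to work with the weighted norm $\tilde H^m$ of \eqref{def_tH} (adapted with the coefficient of the current iterate): then the factor $1+h^n$ in front of $\nabla\cdot u^{n+1}$ is cancelled by the weight before the top derivative lands on it, and the exact pairing with the Riesz term in the $u$-equation is restored. This is the discrete analogue of the weighted identity underlying Lemmas~\ref{est_u} and~\ref{est_h}, where the shared term $\int_\om \Lambda^{\frac{\alpha-d}{2}}\nabla R_k\cdot U_k\,dx$ cancels, and it is the reason the norm $\|\rho^{-1/2}\pa^k h\|_{L^2}$ (rather than $\|\pa^k h\|_{L^2}$) appears. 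Structurally, $\tfrac{1}{1+h^n}$ is the symmetrizer weight for the variable-coefficient acoustic block. Once you incorporate this weight, the rest of your outline — uniform $X^m$ bound, contraction in $X^{m-1}$, Bona--Smith regularization, propagation of $\inf(1+h) > 0$ — is sound and consistent with the route the paper takes via its a~priori machinery.
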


We next show the equivalence relation between the reformulated system \eqref{np_ER2} and the original one \eqref{np_ER}. Since its proof is classical, we omit it here.

\begin{proposition} Let $m > \frac d2+2$. For any fixed $T>0$, if $(\rho,u) \in \mc([0,T); X^m)$ solves the system \eqref{np_ER} with $\rho > 0$, then $(h,u)  \in \mc([0,T); X^m)$ solves the system \eqref{np_ER2} with $1+h > 0$. Conversely, if $(h,u)  \in \mc([0,T); X^m)$ solves the system \eqref{np_ER2} with $1+h > 0$ solves the system \eqref{np_ER2} with $1+h > 0$, then $(\rho,u) \in \mc([0,T); X^m)$ solves the system \eqref{np_ER} with $\rho > 0$.
\end{proposition}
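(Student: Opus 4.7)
The plan is to carry out the change of variables $h=\rho-1$ (equivalently $\rho=1+h$) and to check, separately for each direction, (a) that the PDEs transform into one another and (b) that the function space $\mathcal{C}([0,T);X^m)$ is preserved. No hard analysis is needed: the key is just an algebraic manipulation, together with the observation that the norm $\|(h,u)\|_{X^m}$ in \eqref{def_Xm} is, modulo the constant background, exactly the natural norm in which one controls $(\rho,u)$.

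First I would treat the forward direction. Given $(\rho,u)\in\mathcal{C}([0,T);X^m)$ solving \eqref{np_ER} with $\rho>0$, define $h:=\rho-1$. Since $\partial_t$ and $\nabla$ annihilate the constant $1$, the continuity equation in \eqref{np_ER} immediately rewrites as $\partial_t h+\nabla\cdot u+\nabla\cdot(hu)=0$, which is the first line of \eqref{np_ER2}. For the momentum equation I would expand $\partial_t(\rho u)+\nabla\cdot(\rho u\otimes u)=\rho(\partial_t u+u\cdot\nabla u)+u\bigl(\partial_t\rho+\nabla\cdot(\rho u)\bigr)$ and cancel the second bracket using the continuity equation; dividing the resulting identity $\rho(\partial_t u+u\cdot\nabla u)=-\rho u-\rho\nabla\Lambda^{\alpha-d}h$ by $\rho>0$ then yields the second line of \eqref{np_ER2}. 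Regularity-wise, $h=\rho-1$ inherits $H^m$ and $\dot H^{-(d-\alpha)/2}$ control from the assumption on $\rho$ (the background state is removed by the neutrality condition $\int_\Omega(\rho-1)\,dx=0$, which is preserved in time by the continuity equation), and $u$ is unchanged; hence $(h,u)\in\mathcal{C}([0,T);X^m)$ with $1+h=\rho>0$.

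For the reverse direction I would reverse the algebra. Setting $\rho:=1+h>0$, the first line of \eqref{np_ER2} becomes $\partial_t\rho+\nabla\cdot(\rho u)=0$. To recover the conservative momentum equation one multiplies the second line of \eqref{np_ER2} by $\rho$ to obtain $\rho(\partial_t u+u\cdot\nabla u)=-\rho u-\rho\nabla\Lambda^{\alpha-d}h$, and then adds $u$ times the continuity equation to the left-hand side; this reassembles $\partial_t(\rho u)+\nabla\cdot(\rho u\otimes u)$, giving exactly \eqref{np_ER}. The $X^m$ regularity of $(\rho,u)$ follows from that of $(h,u)$ via $\rho=1+h$.

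The only nontrivial bookkeeping, and in this sense the main (mild) obstacle, is justifying the multiplications, divisions by $\rho$, and product expansions above in the class $\mathcal{C}([0,T);X^m)$. All of these are legitimate because $m>\tfrac d2+2$ yields a Sobolev embedding into $W^{1,\infty}$, so that $h,u,\nabla u,\nabla h$ are continuous and bounded; combined with the uniform positivity of $\rho=1+h$, this makes pointwise manipulations and the use of the Moser-type inequalities of Proposition \ref{ineqs} straightforward, and in particular shows that none of the transformed terms leave the space $X^m$. This is why the authors regard the proof as classical and omit the details.
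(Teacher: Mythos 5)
The paper omits the proof, noting only that it is ``classical''; your argument is precisely the standard change-of-variables calculation (expand $\partial_t(\rho u)+\nabla\cdot(\rho u\otimes u)=\rho(\partial_t u+u\cdot\nabla u)+u(\partial_t\rho+\nabla\cdot(\rho u))$, use continuity, divide/multiply by $\rho=1+h>0$, and invoke $m>\tfrac d2+2\Rightarrow$ embedding into $W^{1,\infty}$ to keep everything in $X^m$) that the authors clearly have in mind. It is correct and complete.
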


\subsection{Global well-posedness}
In this part, we focus on the {\it a priori} estimates of solutions $(h,u)$ in the function space $L^\infty(0,T;X^m)$.  

Before we move on, we define a modified $H^m$ norm for $h$ as follows:
\bq\label{def_tH}
\|h\|_{\tilde{H}^m} := \sum_{0<|k|\le m} \|\rho^{-\frac12} \pa^k h\|_{L^2}.
\eq
Note that $\|h\|_{\tilde{H}^{m}} = \|\rho\|_{\tilde{H}^m}$. Furthermore, if $\|h\|_{L^\infty} <1$, then
\[
\|h\|_{H^m} \approx \|h\|_{L^2} + \|h\|_{\tilde{H}^m},
\]
since
\[
\lt( 1-\|h\|_{L^\infty}\rt)^{1/2} \|h\|_{\tilde{H}^m} \leq \sum_{0<|k|\le m} \| \pa^k h\|_{L^2} \le \|\rho\|_{L^\infty}^{\frac12} \|h\|_{\tilde{H}^m}.
\]
Thus, rather than directly estimate $\|h\|_{H^m}$, we can estimate $\|h\|_{L^2} + \|h\|_{\tilde{H}^m}$. 
 
\vspace{0.4cm}

Next, we investigate higher-order estimates for $(h, u)$. Before proceeding, for notational simplicity, we set
\[
X(T;m) := \sup_{0 \leq t \leq T}\|(h,u)(\cdot,t)\|_{X^m}^2 \quad \mbox{and} \quad X_0(m) := \|(h_0,u_0)\|_{X^m}^2.
\] 
Since the proofs for the following two lemmas are almost the same as in \cite{CJpre}, we omit here.

\begin{lemma}\label{est_u}
Let $T>0$, $m > \frac d2+2$, and $(h,u) \in \mc([0,T); X^m)$  be a solution to the system \eqref{np_ER2}. Then we have
\[
\frac12 \frac{d}{dt}\|U_k\|_{L^2}^2 + \|U_k\|_{L^2}^2 \le C\|u\|_{H^{m+\frac{d-\alpha}{2}}}^3 - \int_\om \Lambda^{\frac{\alpha-d}{2}}\nabla R_k \cdot U_k\,dx,
\]
for $0 \le k \le m$, where $U_k := \Lambda^{\frac{d-\alpha}{2}} \pa^k u$, $R_k := \pa^k h$ and $C=C(m,k,d,\alpha)$ is a positive constant independent of $T$.
\end{lemma}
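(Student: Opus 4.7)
The plan is to apply $\Lambda^{(d-\alpha)/2}\pa^k$ to the velocity equation in \eqref{np_ER2} and pair the result with $U_k$ in $L^2$. Using $\Lambda^{(d-\alpha)/2}\Lambda^{\alpha-d}=\Lambda^{(\alpha-d)/2}$, the resulting identity reads
\[
\pa_t U_k + \Lambda^{(d-\alpha)/2}\pa^k(u\cdot\nabla u) = -U_k - \Lambda^{(\alpha-d)/2}\nabla R_k,
\]
so that the $L^2$ pairing immediately yields $\tfrac12\frac{d}{dt}\|U_k\|_{L^2}^2$, the damping $\|U_k\|_{L^2}^2$ on the left, the Riesz source term $-\int \Lambda^{(\alpha-d)/2}\nabla R_k\cdot U_k\,dx$ appearing in the statement, and a trilinear remainder
\[
I := \int_\om \Lambda^{(d-\alpha)/2}\pa^k(u\cdot\nabla u)\cdot U_k\,dx
\]
that must be bounded by $C\|u\|_{H^{m+(d-\alpha)/2}}^3$.

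To treat $I$, I would peel off the leading transport piece through the decomposition
\[
\Lambda^{(d-\alpha)/2}\pa^k(u\cdot\nabla u) = u\cdot\nabla U_k + \bigl[\Lambda^{(d-\alpha)/2}\pa^k,\,u\cdot\nabla\bigr]u.
\]
Integration by parts gives $\int u\cdot\nabla U_k\cdot U_k\,dx = -\tfrac12\int(\nabla\cdot u)|U_k|^2\,dx$, which is controlled by $\|\nabla u\|_{L^\infty}\|U_k\|_{L^2}^2$. Since $m>d/2+2$, the Sobolev embedding $H^{m-1}\hookrightarrow L^\infty$ yields $\|\nabla u\|_{L^\infty}\lesssim\|u\|_{H^m}\le\|u\|_{H^{m+(d-\alpha)/2}}$, and together with the trivial bound $\|U_k\|_{L^2}\le\|u\|_{H^{k+(d-\alpha)/2}}\le\|u\|_{H^{m+(d-\alpha)/2}}$ this gives the desired cubic bound for the transport piece.

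The commutator $[\Lambda^{(d-\alpha)/2}\pa^k,u\cdot\nabla]u$ I would further split as
\[
\Lambda^{(d-\alpha)/2}\bigl[\pa^k,\,u\cdot\nabla\bigr]u + \bigl[\Lambda^{(d-\alpha)/2},\,u\cdot\nabla\bigr]\pa^k u.
\]
Lemma \ref{comm_est2} with $s=(d-\alpha)/2$ bounds the second term by $\|u\|_{H^{d/2+1+(d-\alpha)/2+\e}}\|\pa^k u\|_{H^{(d-\alpha)/2}}$; since $m>d/2+2$, one can choose $\e$ so that $d/2+1+\e\le m$, and both factors are dominated by $\|u\|_{H^{m+(d-\alpha)/2}}$. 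The first term is handled by expanding $[\pa^k,u\cdot\nabla]u$ as a sum of Leibniz products $\pa^{k-j}u\cdot\pa^{j}\nabla u$ with $1\le j\le k$ and invoking the fractional product and Moser-type inequalities of Proposition \ref{ineqs}(ii)--(iv) after applying $\Lambda^{(d-\alpha)/2}$; this again produces an $L^2$ bound by $\|u\|_{H^{m+(d-\alpha)/2}}^2$. Pairing with $\|U_k\|_{L^2}\lesssim\|u\|_{H^{m+(d-\alpha)/2}}$ closes the cubic estimate.

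The principal obstacle in this argument is the derivative bookkeeping in the commutator: the composite operator $\Lambda^{(d-\alpha)/2}\pa^k$ has total order $k+(d-\alpha)/2$, so a one-shot application of Lemma \ref{comm_est2} with $s=k+(d-\alpha)/2$ would require $u\in H^{d/2+1+k+(d-\alpha)/2+\e}$, strictly more regularity than $X^m$ offers when $k=m$. The two-stage decomposition sketched above resolves this by funneling the top-order derivative into $u\cdot\nabla U_k$, where integration by parts eliminates it at the cost only of $\|\nabla u\|_{L^\infty}$; the residual commutators then require at most $(d-\alpha)/2$ fractional orders on $u$ in addition to the $H^m$ control, which is precisely what the norm $\|u\|_{H^{m+(d-\alpha)/2}}$ supplies. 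This mechanism both forces the sharp threshold $m>d/2+2$ and explains the asymmetric weighting of $u$ versus $h$ in the norm \eqref{def_Xm}.
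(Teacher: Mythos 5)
Your proposal is correct and follows the natural route: apply $\Lambda^{(d-\alpha)/2}\pa^k$ to the velocity equation, pair with $U_k$, peel off the transport term $u\cdot\nabla U_k$ (so integration by parts absorbs the top-order derivative), and control the residual commutator via a two-stage split, with Lemma~\ref{comm_est2} handling the fractional piece and Leibniz plus Proposition~\ref{ineqs}(ii) handling the integer piece. The paper omits the proof, deferring to \cite{CJpre}, and what you wrote is essentially that argument; in particular your observation that a one-shot application of Lemma~\ref{comm_est2} with $s = k + (d-\alpha)/2$ overruns the available regularity when $k = m$ correctly identifies why the two-stage decomposition (and the asymmetric fractional weight on $u$ in the norm \eqref{def_Xm}) is forced. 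One small bookkeeping slip: the Leibniz expansion of $[\pa^k, u\cdot\nabla]u$ should run over $\pa^j u\cdot\pa^{k-j}\nabla u$ with $1\le j\le k$ (equivalently $\pa^{k-j}u\cdot\pa^j\nabla u$ with $0\le j\le k-1$); your index range $1\le j\le k$ in the form $\pa^{k-j}u\cdot\pa^j\nabla u$ accidentally includes the cancelled $j=k$ term and drops the $\pa^k u\cdot\nabla u$ term, though the individual estimates you invoke would cover either convention, so the bound is unaffected.
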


\begin{remark}
Thanks to the Gagliardo--Nirenberg interpolation inequalities in Proposition \ref{ineqs}, we have the following equivalence relation: for any $i \in \{0, \dots, m\}$,
\bq\label{equiv_u}
\|u\|_{L^2} + \sum_{i \le k \le m} \|U_k\|_{L^2} \approx \|u\|_{H^{m+\frac{d-\alpha}{2}}}.
\eq
\end{remark}

\begin{lemma}\label{est_h}
Let $T>0$, $m > \frac d2+2$,  and $(h,u) \in \mc([0,T); X^m)$  be a solution to the system \eqref{np_ER2}. Then we have
\[
\frac12\frac{d}{dt}\int_\om \frac1\rho R_k^2\,dx \le C\|u\|_{H^{m+\frac{d-\alpha}{2}}} \lt( 1+\|\nabla \log \rho\|_{L^\infty}\rt)^{2(k-1)}\sum_{0<l\le k} \lt\| \frac{1}{\sqrt{\rho}}R_l\rt\|_{L^2}^2 +\int_\om \Lambda^{\frac{\alpha-d}{2}}\nabla R_k \cdot U_k\,dx,
\]
for $1\le k \le m$, where $C=C(m,k,d,\alpha)$ is a positive constant independent of $T$.
\end{lemma}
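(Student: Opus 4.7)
The plan is to apply $\pa^k$ to the continuity equation rewritten as $\pa_t\rho + u\cdot\nabla\rho + \rho\,\nabla\cdot u = 0$ and test against $R_k/\rho$. Since $\pa^k \rho = \pa^k h = R_k$ for $k\ge 1$, this produces
\[
\pa_t R_k + u\cdot\nabla R_k + \rho\,\nabla\cdot \pa^k u = -[\pa^k, u\cdot\nabla]h - [\pa^k, \rho\,\nabla\cdot]u.
\]
First I would carry out the energy step. The $\pa_t R_k$ piece gives $\tfrac12 \tfrac{d}{dt}\|\rho^{-1/2}R_k\|_{L^2}^2 + \tfrac12\int \rho^{-2}(\pa_t\rho)R_k^2\,dx$; the transport piece, after integration by parts using $\nabla\cdot(u/\rho) = (\nabla\cdot u)/\rho - (u\cdot\nabla\rho)/\rho^2$, contributes $-\tfrac12\int (\nabla\cdot u)R_k^2/\rho\,dx + \tfrac12\int (u\cdot\nabla\rho)R_k^2/\rho^2\,dx$; substituting $\pa_t\rho = -u\cdot\nabla\rho - \rho\,\nabla\cdot u$ cancels the $u\cdot\nabla\rho$-type corrections cleanly, leaving a single $-\int (\nabla\cdot u)R_k^2/\rho\,dx$ residual. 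The flux term $\int R_k\,\nabla\cdot \pa^k u\,dx$ is converted, via $\pa^k u = \Lambda^{(\alpha-d)/2}U_k$ and self-adjointness of $\Lambda^{(\alpha-d)/2}$, into $-\int \Lambda^{(\alpha-d)/2}\nabla R_k\cdot U_k\,dx$. Rearranging yields the identity
\[
\tfrac12\tfrac{d}{dt}\|\rho^{-1/2}R_k\|_{L^2}^2 = \int \frac{\nabla\cdot u}{\rho}R_k^2\,dx + \int \Lambda^{(\alpha-d)/2}\nabla R_k\cdot U_k\,dx - \int\frac{R_k}{\rho}\bigl([\pa^k,u\cdot\nabla]h + [\pa^k,\rho\,\nabla\cdot]u\bigr)dx.
\]

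The first integral is bounded by $\|\nabla u\|_{L^\infty}\|\rho^{-1/2}R_k\|_{L^2}^2$, and Sobolev embedding (valid since $m>d/2+2$) controls $\|\nabla u\|_{L^\infty}\ls \|u\|_{H^{m+(d-\alpha)/2}}$. The second integral is exactly the cross term in the statement, which is deliberately kept explicit so that it will cancel against the opposite-sign cross term of Lemma~\ref{est_u} when the velocity and density estimates are later combined. For the two commutator integrals I would apply Cauchy--Schwarz in the form
\[
\Bigl|\int \frac{R_k}{\rho}F\,dx\Bigr|\le \|\rho^{-1/2}R_k\|_{L^2}\,\|\rho^{-1/2}F\|_{L^2},
\]
and then estimate $\|\rho^{-1/2}F\|_{L^2}$ for each $F$ via the Moser-type inequalities of Proposition~\ref{ineqs}(iv): low-order factors of $u$ are placed in $L^\infty$ by Sobolev embedding, producing $\|u\|_{H^{m+(d-\alpha)/2}}$; the intermediate derivatives $\pa^j\rho$ for $1\le j\le k-1$ are rewritten inductively as polynomials in $\rho$ and $\nabla\log\rho$ by repeated use of $\nabla\rho = \rho\,\nabla\log\rho$; and the residual $L^2$ pieces, each of the form $\pa^l h$ with $0<l\le k$, are reorganized into the weighted sum $\sum_{0<l\le k}\|\rho^{-1/2}R_l\|_{L^2}^2$.

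The main technical difficulty lies in the weight bookkeeping for the $[\pa^k,\rho\,\nabla\cdot]u$ commutator: every intermediate derivative $\pa^j\rho$ must be split into $L^\infty$-bounded factors of $\nabla\log\rho$ and $L^2$-controlled factors of $\rho^{-1/2}R_l$, and it is the iteration of this splitting over the Leibniz expansion that accumulates the coefficient $(1+\|\nabla\log\rho\|_{L^\infty})^{2(k-1)}$ in the final bound. Once this combinatorics is handled, collecting all contributions together with the identity above yields the inequality of the lemma.
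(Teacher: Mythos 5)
Your reduction to the weighted energy identity, including the clean cancellation of the $u\cdot\nabla\rho$-type corrections against the $\partial_t\rho$ contribution and the extraction of the cross term via $\partial^k u=\Lambda^{(\alpha-d)/2}U_k$ and self-adjointness, is the right first step, and the cross term does come out with the correct sign to cancel against Lemma~\ref{est_u}. The paper omits its own argument here (deferring to an earlier reference), so I cannot compare decompositions line by line, but the algebra up to your displayed identity is sound.

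The gap is in the commutator estimate, specifically the $|\gamma|=1$ term of the Leibniz expansion of $[\partial^k,\rho\,\nabla\cdot\,]u$, schematically $\nabla\rho\cdot\partial^{k-1}(\nabla\cdot u)$. Tested against $R_k/\rho$, this carries a $u$-derivative at the full order $k$, which cannot be placed in $L^\infty$ when $k$ is close to $m$ (Sobolev embedding requires $k<m-\tfrac{\alpha}{2}$, and $m>\tfrac d2+2$ alone does not guarantee this once $\alpha\ge2$). Applying your stated rule---factor $\nabla\rho=\rho\,\nabla\log\rho$ and absorb $\nabla\log\rho$ into the $(1+\|\nabla\log\rho\|_{L^\infty})^{2(k-1)}$ coefficient---then yields a bound of the shape
\[
\|u\|_{H^{m+\frac{d-\alpha}{2}}}\,\|\nabla\log\rho\|_{L^\infty}\,\|\rho^{-1/2}R_k\|_{L^2},
\]
which is \emph{linear} in the weighted $R$-norms and is not dominated by the right-hand side of the lemma when $\|\rho^{-1/2}R_k\|_{L^2}$ is small. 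To recover the required quadratic structure one cannot let $\nabla\rho$ only feed the $\nabla\log\rho$-coefficient; the single $\nabla\rho$ factor must itself supply the second $\|\rho^{-1/2}R_l\|_{L^2}$ factor. That means either $\|\nabla h\|_{L^\infty}\lesssim\|\nabla h\|_{H^{k-1}}\lesssim\sum_{0<l\le k}\|\rho^{-1/2}R_l\|_{L^2}$ (valid only for $k>\tfrac d2+1$), or $\partial^k u\in L^\infty$ (valid only for $k<m-\tfrac{\alpha}{2}$), and in the general case a $k$-dependent Gagliardo--Nirenberg/H\"older split with $\nabla\rho\in L^a$, $\partial^k u\in L^b$, $\tfrac1a+\tfrac1b=\tfrac12$, each exponent chosen so that both factors embed into the controlled norms. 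This interpolation is the substantive content of the lemma, and it is not carried out in the sketch; the ``low-order factors of $u$ go in $L^\infty$'' rule does not apply to this top-order $u$-derivative, and the $\nabla\log\rho$-factorization as you invoke it loses the needed second $R_l$-factor.
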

Here, we separately consider the $L^2$-estimate for $h$.
\begin{lemma}\label{est_h_0}
Let $T>0$ and $(h,u) \in \mc([0,T); X^m)$  be a solution to the system \eqref{np_ER2}. Then we have
\[
\frac12\frac{d}{dt}\|h\|_{L^2}^2 \le  \|u\|_{L^2}\|\nabla h\|_{L^2}\|h\|_{L^\infty} + \int_\om \Lambda^{\frac{\alpha-d}{2}}\nabla R_0 \cdot U_0\,dx.
\]
\end{lemma}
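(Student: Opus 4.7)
The plan is to derive the identity by testing the continuity equation $\partial_t h+\nabla\cdot(hu)+\nabla\cdot u=0$ against $h$ itself and integrating over $\om$. This gives
\[
\frac12\frac{d}{dt}\|h\|_{L^2}^2=-\int_\om h\,\nabla\cdot(hu)\,dx-\int_\om h\,\nabla\cdot u\,dx,
\]
so the task reduces to handling these two terms separately.

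For the linear term, I would integrate by parts to obtain $-\int_\om h\,\nabla\cdot u\,dx=\int_\om \nabla h\cdot u\,dx$. The key observation here is that, by self-adjointness of the Riesz operator and the identity $\Lambda^{\frac{\alpha-d}{2}}\Lambda^{\frac{d-\alpha}{2}}=\mathrm{Id}$, one has
\[
\int_\om \nabla h\cdot u\,dx=\int_\om \Lambda^{\frac{\alpha-d}{2}}\nabla h\cdot \Lambda^{\frac{d-\alpha}{2}}u\,dx=\int_\om \Lambda^{\frac{\alpha-d}{2}}\nabla R_0\cdot U_0\,dx,
\]
which is exactly the remainder term on the right-hand side of the claim. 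Structurally, this term is the $k=0$ analogue of the terms that cancel when Lemmas \ref{est_u} and \ref{est_h} are added, i.e.\ the source of the hypocoercivity mechanism the authors are setting up.

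For the nonlinear term, I would expand $\nabla\cdot(hu)=u\cdot\nabla h+h\,\nabla\cdot u$ and use $h\,u\cdot\nabla h=\frac12 u\cdot\nabla(h^2)$ together with integration by parts to rewrite
\[
-\int_\om h\,\nabla\cdot(hu)\,dx=-\frac12\int_\om h^2\,\nabla\cdot u\,dx=\int_\om h\,u\cdot\nabla h\,dx.
\]
Then a direct application of H\"older's inequality bounds this by $\|h\|_{L^\infty}\|u\|_{L^2}\|\nabla h\|_{L^2}$, which matches the claimed right-hand side.

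There is no real obstacle in this lemma; it is a routine $L^2$-energy identity for the continuity equation. The only substantive remark is to notice the algebraic identity $\Lambda^{\frac{\alpha-d}{2}}\Lambda^{\frac{d-\alpha}{2}}=\mathrm{Id}$, so that the natural $\int \nabla h\cdot u\,dx$ coming from integration by parts admits a rewriting compatible with the $(R_k,U_k)$ formalism introduced in Lemmas \ref{est_u} and \ref{est_h}; this is what will allow the three estimates to be combined into a closed energy-dissipation inequality later.
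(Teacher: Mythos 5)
Your proposal is correct and follows essentially the same path as the paper: test the continuity equation against $h$, integrate by parts, rewrite $\int_\om \nabla h \cdot u\,dx$ via self-adjointness of $\Lambda^{(\alpha-d)/2}$ and $\Lambda^{(\alpha-d)/2}\Lambda^{(d-\alpha)/2}=\mathrm{Id}$ to match the $(R_0,U_0)$ formalism, and bound the cubic term $\int_\om h\,(u\cdot\nabla h)\,dx$ by H\"older. Your handling of the nonlinear term is slightly more roundabout than the paper's single integration by parts $-\int_\om h\,\nabla\cdot(hu)\,dx=\int_\om \nabla h\cdot(hu)\,dx$, but it reaches the same intermediate expression and the same bound.
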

\begin{proof}
Direct computation implies
\begin{align*}
\frac12\frac{d}{dt}\|h\|_{L^2}^2&= -\int_\om h \nabla \cdot (hu)\,dx - \int_\om h \nabla \cdot u\,dx\\
&= \int_\om h (\nabla h \cdot u)\,dx + \int_\om \nabla h \cdot u\,dx\\
&\le \|h\|_{L^\infty} \|\nabla h\|_{L^2}\|u\|_{L^2} + \int_\om \Lambda^{\frac{\alpha-d}{2}}\nabla R_0 \cdot U_0\,dx,
\end{align*}
and this implies the desired result.
\end{proof}

As stated in Lemma \ref{est_u}, due to the presence of the linear damping in velocity, we have a dissipation rate for the velocity $u$. Moreover, the terms with the highest order derivatives appeared in Lemmas \ref{est_u} and \ref{est_h} are canceled each other out. Thus we now focus on the estimate for the dissipation rate for $h$. For this, a delicate analysis for the Riesz interaction term based on the hypocoercivity-type estimate is required. 

We first begin with the zeroth-order estimate.
\begin{lemma}\label{h_decay0}
Let $T>0$, $m > \frac d2+2$, and $(h,u) \in \mc([0,T); X^m)$  be a solution to the system \eqref{np_ER2} satisfying
\[
\sup_{0\le t \le T} \|h(t)\|_{L^\infty} \le \frac12.
\]
 Then we have
 \[
\frac{d}{dt}\int_\om \frac1\rho  \nabla h \cdot \Lambda^{d-\alpha} u\,dx + \frac12\|\nabla h\|_{L^2}^2 \le  C\|\nabla h\|_{H^{m-1}} \|u\|_{H^{m+\frac{d-\alpha}{2}}}^2 + \|\Lambda^{\frac{d-\alpha}{2}} (\nabla \cdot u)\|_{L^2}^2 + 6\|\Lambda^{d-\alpha} u\|_{L^2}^2,
\]
where $C=C(m,d,\alpha)$ is a positive constant independent of $T$.
\end{lemma}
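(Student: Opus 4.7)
The plan is to differentiate $\int_\om \frac{1}{\rho}\nabla h \cdot \Lambda^{d-\alpha} u \, dx$ in time by the product rule, producing three contributions $I_1 + I_2 + I_3$ from $\partial_t(1/\rho)$, $\partial_t \nabla h$, and $\Lambda^{d-\alpha}\partial_t u$ respectively. For $\partial_t \nabla h$ and $\partial_t(1/\rho)$ I would use the continuity equation in the form $\partial_t \rho = -\nabla\cdot u - \nabla\cdot(hu)$, while the momentum equation yields
\[
\Lambda^{d-\alpha}\partial_t u = -\Lambda^{d-\alpha} u - \Lambda^{d-\alpha}(u\cdot\nabla u) - \nabla h.
\]
The crucial observation is that the $-\nabla h$ term in $\Lambda^{d-\alpha}\partial_t u$ produces the quantity $-\int_\om \frac{|\nabla h|^2}{\rho}\,dx$, and since $\sup |h|\le\tfrac12$ implies $\rho\in[\tfrac12,\tfrac32]$, this dominates with $-\tfrac{2}{3}\|\nabla h\|_{L^2}^2$, which is the main dissipative contribution driving the lemma.

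For the linear damping cross-term $-\int_\om \frac{\nabla h\cdot \Lambda^{d-\alpha}u}{\rho}\,dx$, I would apply Cauchy--Schwarz weighted by $\sqrt{\rho}^{-1}$ followed by Young's inequality with a small parameter, so that a fraction of $\int \frac{|\nabla h|^2}{\rho}\,dx$ is absorbed into the main dissipative term and the remainder is paid by $C\|\Lambda^{d-\alpha} u\|_{L^2}^2$; after combining, the net coefficient on $\|\nabla h\|_{L^2}^2$ becomes $-\tfrac12$ with $6\|\Lambda^{d-\alpha} u\|_{L^2}^2$ on the right, as stated.

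For the linear fragment of $I_2$, namely $-\int_\om \frac{1}{\rho}\nabla(\nabla\cdot u)\cdot\Lambda^{d-\alpha}u\,dx$, one integration by parts (shifting a $\partial_j$ from $\partial_j(\nabla\cdot u)$ onto $(\Lambda^{d-\alpha}u)_j/\rho$) gives
\[
\int_\om \frac{1}{\rho}(\nabla\cdot u)\,\Lambda^{d-\alpha}(\nabla\cdot u)\,dx - \int_\om \frac{(\nabla h\cdot\Lambda^{d-\alpha}u)(\nabla\cdot u)}{\rho^2}\,dx.
\]
Writing $\tfrac{1}{\rho} = 1 - \tfrac{h}{\rho}$ in the first integral and using the self-adjointness of $\Lambda^{(d-\alpha)/2}$ on the ``$1$''-piece produces exactly $\|\Lambda^{(d-\alpha)/2}(\nabla\cdot u)\|_{L^2}^2$, which matches the corresponding term on the right-hand side of the lemma.

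All remaining contributions are at least cubic: the convective piece $-\int \frac{1}{\rho}\nabla h\cdot \Lambda^{d-\alpha}(u\cdot\nabla u)\,dx$; the entire $I_1$ (which is triply nonlinear because it carries an extra $\nabla h$); the $h/\rho$-remainders isolated above; and the $\nabla(\nabla\cdot(hu))$ piece of $I_2$. For each, I would bound by $C\|\nabla h\|_{H^{m-1}}\|u\|_{H^{m+(d-\alpha)/2}}^2$ via Cauchy--Schwarz, the fractional Leibniz and Moser-type inequalities of Proposition \ref{ineqs}, the commutator estimate in Lemma \ref{comm_est2}, the Sobolev embedding $H^{m-1}\hookrightarrow L^\infty$ (which is available since $m>d/2+2$), and the bound $\|1/\rho\|_{L^\infty}\le 2$. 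The main technical obstacle is precisely these nonlocal terms $\Lambda^{d-\alpha}$ of products $u\cdot\nabla u$ and $\nabla\cdot(hu)$: since the right-hand side of the lemma allows only $\|\nabla h\|_{H^{m-1}}$ (not the full $\|h\|_{H^m}$), the fractional derivative must be transferred carefully between factors via integration by parts and the commutator bound so that one never spends more than $H^{m-1}$ regularity of $\nabla h$ while still keeping $u$ in $H^{m+(d-\alpha)/2}$.
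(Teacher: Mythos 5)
Your overall scaffolding — differentiate in time, split into $I_1$, $I_2$, $I_3$, extract the dissipation $-\int \rho^{-1}|\nabla h|^2$ from the Riesz term in $I_3$, and handle the linear damping cross‐term with weighted Cauchy--Schwarz and Young to land on $-\tfrac12\|\nabla h\|_{L^2}^2 + 6\|\Lambda^{d-\alpha}u\|_{L^2}^2$ — matches the paper, and your treatment of $I_3$ is sound.

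However, there is a genuine gap in how you handle $I_1$ and $I_2$: you propose to bound several pieces individually by $C\|\nabla h\|_{H^{m-1}}\|u\|_{H^{m+\frac{d-\alpha}{2}}}^2$, and two of them do not admit such a bound. Expanding $\pa_t\rho = -\nabla\cdot(\rho u) = -\rho\nabla\cdot u - \nabla h\cdot u$ inside $I_1$ produces the piece
\[
\int_\om \frac{(\nabla h\cdot u)\,(\nabla h\cdot\Lambda^{d-\alpha}u)}{\rho^2}\,dx,
\]
which carries two factors of $\nabla h$ and is therefore only controlled by $C\|\nabla h\|_{H^{m-1}}^2\|u\|_{H^{m+\frac{d-\alpha}{2}}}^2$ — not by the cubic term in the statement (the lemma assumes only $\|h\|_{L^\infty}\le\tfrac12$, with no smallness on $\|\nabla h\|_{H^{m-1}}$). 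Worse, the ``$h/\rho$-remainder'' you isolate from $\tfrac1\rho = 1 - \tfrac{h}{\rho}$, namely $\int_\om \tfrac{h}{\rho}(\nabla\cdot u)\Lambda^{d-\alpha}(\nabla\cdot u)\,dx$, contains no $\nabla h$ at all; it is bounded only by $C\|u\|_{H^{m+\frac{d-\alpha}{2}}}^2$, and on $\R^d$ zero mean does not yield $\|h\|_{L^\infty}\lesssim\|\nabla h\|_{H^{m-1}}$. These pieces are not estimable; they must \emph{cancel exactly}, and your organization (splitting $\pa_t h$ into $-\nabla\cdot u - \nabla\cdot(hu)$ before integrating by parts) obscures this. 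The paper instead integrates by parts in $\mathcal{I}_2 = \int\tfrac1\rho\nabla(\pa_t h)\cdot\Lambda^{d-\alpha}u\,dx$ first, transferring the gradient onto $\Lambda^{d-\alpha}u/\rho$; the term from hitting $\tfrac1\rho$ gives $\int\tfrac{\pa_t h\,\nabla h\cdot\Lambda^{d-\alpha}u}{\rho^2}\,dx$, which equals $-\mathcal{I}_1$ exactly (as $\pa_t\rho=\pa_t h$), and the other term becomes $\int\tfrac{\nabla\cdot(\rho u)}{\rho}\Lambda^{d-\alpha}(\nabla\cdot u)\,dx = \int(\nabla\cdot u)\Lambda^{d-\alpha}(\nabla\cdot u)\,dx + \int\tfrac{\nabla h\cdot u}{\rho}\Lambda^{d-\alpha}(\nabla\cdot u)\,dx$, where the coefficient~$1$ appears directly from $\rho/\rho$ — there is no $h/\rho$ remainder to estimate — and the second term is genuinely $\lesssim\|\nabla h\|_{L^2}\|u\|_{H^{m+\frac{d-\alpha}{2}}}^2$. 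You should keep $\nabla\cdot(\rho u)$ together throughout so both cancellations become transparent.
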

\begin{proof} We first find
\[\begin{aligned}
\frac{d}{dt}&\int_\om \frac1\rho  \nabla h \cdot \Lambda^{d-\alpha} u\,dx\\
&=-\int_\om \frac{\pa_t \rho}{\rho^2} \nabla h \cdot \Lambda^{d-\alpha} u\,dx + \int_\om \frac1\rho \nabla (\pa_t h) \cdot \Lambda^{d-\alpha} u\,dx + \int_\om \frac1\rho \nabla h \cdot \Lambda^{d-\alpha} (\pa_t u)\,dx\\
&=: \mathcal{I}_1 + \mathcal{I}_2 + \mathcal{I}_3,
\end{aligned}\]
where we use the equation of $h$ in \eqref{np_ER2} to estimate
\begin{align*}
\mathcal{I}_2 &= \int_\om \frac{\nabla h}{\rho^2} \pa_t h \cdot \Lambda^{d-\alpha} u\,dx - \int_\om \frac1\rho \pa_t h \Lambda^{d-\alpha} (\nabla \cdot u)\,dx\\
&= -\mathcal{I}_1 + \int_\om \frac{1}{\rho}\nabla \cdot (\rho u) \Lambda^{d-\alpha} (\nabla \cdot u)\,dx\\
&= -\mathcal{I}_1 + \int_\om (\nabla \cdot u)\Lambda^{d-\alpha} (\nabla \cdot u)\,dx+ \int_\om \lt(\frac{\nabla\rho}{\rho}\cdot u \rt)\Lambda^{d-\alpha} (\nabla \cdot u)\,dx\\
&\le  -\mathcal{I}_1 +\|\Lambda^{\frac{d-\alpha}{2}} (\nabla \cdot u)\|_{L^2}^2+ \frac{\|u\|_{L^\infty}}{1-\|h\|_{L^\infty}} \|\nabla h\|_{L^2} \|\Lambda^{d-\alpha} (\nabla \cdot u)\|_{L^2}\\
&\le  -\mathcal{I}_1 +\|\Lambda^{\frac{d-\alpha}{2}} (\nabla \cdot u)\|_{L^2}^2 + C\|\nabla h\|_{L^2}\|u\|_{H^{m+\frac{d-\alpha}{2}}}^2.
\end{align*}
Here $C=C(m,d,\alpha)$ is a positive constant independent of $T$.

On the other hand, $\mathcal{I}_3$ can be estimated as
\[\begin{aligned}
\mathcal{I}_3 &= -\int_\om \frac1\rho \nabla h \cdot \Lambda^{d-\alpha}\lt( u \cdot \nabla u + u + \Lambda^{\alpha-d} \nabla h\rt)\,dx\\
&\le C\frac{1}{1-\|h\|_{L^\infty}}\|\nabla h\|_{L^2}\|u\|_{H^{m+\frac{d-\alpha}{2}}}^2 + \frac{1}{1-\|h\|_{L^\infty}}\|\nabla h\|_{L^2} \|\Lambda^{d-\alpha} u\|_{L^2} - \frac{1}{1+\|h\|_{L^\infty}}\|\nabla h\|_{L^2}^2\\
&\le C\|\nabla h\|_{L^2}\|u\|_{H^{m+\frac{d-\alpha}{2}}}^2 + 2\|\nabla h\|_{L^2} \|\Lambda^{d-\alpha} u\|_{L^2} - \frac23 \|\nabla h\|_{L^2}^2\\
&\le -\frac12\|\nabla h\|_{L^2}^2 + C\|\nabla h\|_{L^2}\|u\|_{H^{m+\frac{d-\alpha}{2}}}^2 + 6\|\Lambda^{d-\alpha} u\|_{L^2}^2.
\end{aligned}\]

Thus, we combine the estimates for $\mathcal{I}_i$, $i=1,2,3$, to conclude the desired result.
\end{proof}

Before proceeding the higher order estimates, we provide some technical estimates based on the Moser-type inequality below. For the smooth flow of reading, we postpone its proof to Appendix \ref{app_a}.
\begin{lemma}\label{h_decay01}
Let $T>0$, $m > \frac d2+2$, and $(h,u) \in \mc([0,T); X^m)$  be a solution to the system \eqref{np_ER2} satisfying
\[
\sup_{0\le t \le T} \|h(t)\|_{L^\infty} \le \frac12.
\]
 Then for $1 \leq k \leq m-1$ we have
 \begin{itemize}
 \item[(i)] 
 \[
\lt\|\nabla\lt(\frac{\nabla h}{\rho^2} \cdot \pa^k (\rho u) \rt) \rt\|_{L^2} \leq C\lt(1+\|\nabla h\|_{H^{m-1}} \rt)^2 \|\nabla h\|_{H^{m-1}} \|u\|_{H^{m+\frac{d-\alpha}{2}}},
 \]
 \item[(ii)]
 \[
 \lt\|\nabla\lt[ \nabla\cdot\lt( \frac1\rho \lt( \pa^k (\rho u) - (\pa^k \rho) u - \rho (\pa^k u)\rt)\rt)\rt] \rt\|_{L^2} \leq C\lt(1+\|\nabla h\|_{H^{m-1}} \rt)^2 \|\nabla h\|_{H^{m-1}} \|u\|_{H^{m+\frac{d-\alpha}{2}}},
 \]
 \item[(iii)]
 \[
 \lt\| \pa\lt(\frac{1}{\rho^2} (\pa^k h) \nabla h\rt) \rt\|_{L^2} \leq C\lt(1+\|\nabla h\|_{H^{m-1}}\rt)\|\nabla h\|_{H^{m-1}}^2,
 \]
 \item[(iv)]
 \[
 \lt\|\Lambda^{d-\alpha}\pa^{k-1} \pa_t u \rt\|_{L^2} \leq C\lt(\|u\|_{H^{m+\frac{d-\alpha}{2}}}^2 + \|u\|_{H^{m+\frac{d-\alpha}{2}}} + \|\nabla h\|_{H^{m-1}}\rt),
 \]
 and
 \item[(v)]
 \[
 \lt\|\Lambda^{d-\alpha} \pa^{k-1}  (\nabla u : (\nabla  u)^T)\rt\|_{L^2} \leq C\|u\|_{H^{m+\frac{d-\alpha}{2}}}^2.
 \]
 \end{itemize}
 Here $\rho = h + 1$ and $C=C(m,k,d,\alpha)$ is a positive constant independent of $T$.
\end{lemma}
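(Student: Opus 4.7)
The plan is to treat all five estimates in parallel, since each reduces to Moser/Leibniz calculus in Sobolev spaces (Proposition \ref{ineqs}), specialized by the algebraic structure of the expression. The pointwise bound $\|h\|_{L^\infty}\le 1/2$ gives $1/2\le\rho\le 3/2$, so $\|1/\rho^j\|_{L^\infty}$ is bounded; then by Fa\`a di Bruno together with the Moser inequality, every derivative of $1/\rho^j$ costs a derivative of $h$, and one obtains $\|1/\rho^j\|_{H^{m}}\le P_j(\|\nabla h\|_{H^{m-1}})$ for some polynomial $P_j$. The Sobolev embedding $H^{m-1}\hookrightarrow L^\infty$, valid since $m-1>d/2+1$, will be used freely to put low-regularity factors in $L^\infty$.

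For (i) and (iii) the plan is direct: distribute the outermost $\nabla$ via Leibniz, place the lowest-regularity factor of each resulting product in $L^\infty$ via the embedding above, and bound the remaining factors in $L^2$ by Moser; powers of $1/\rho$ are absorbed into the prefactor using the Fa\`a di Bruno estimate. In (i) at most two derivatives land on $1/\rho^2$, producing the quadratic factor $(1+\|\nabla h\|_{H^{m-1}})^2$, while (iii) requires only one. Estimate (ii) is the delicate one: the inner bracket $\pa^k(\rho u)-(\pa^k\rho)u-\rho(\pa^k u)$ is exactly the commutator controlled by the third line of Proposition \ref{ineqs}(iv), which gives $\|\nabla\rho\|_{L^\infty}\|\pa^{k-1}u\|_{L^2}+\|\nabla u\|_{L^\infty}\|\pa^{k-1}\rho\|_{L^2}$. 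The effective order of the bracket is therefore $k-1$, leaving the two outer derivatives $\nabla[\nabla\cdot(\cdot)]$ free to hit either the commutator or $1/\rho$; in each case the same Moser bookkeeping yields the stated bound.

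For (iv) I substitute the momentum equation $\pa_t u=-u\cdot\nabla u-u-\nabla\Lambda^{\alpha-d}h$: the Riesz term telescopes under $\Lambda^{d-\alpha}$ to $\pa^{k-1}\nabla h$, directly bounded by $\|\nabla h\|_{H^{m-1}}$; the damping term gives $\|\Lambda^{d-\alpha}\pa^{k-1}u\|_{L^2}\le\|u\|_{H^{m-1+d-\alpha}}\lesssim\|u\|_{H^{m+(d-\alpha)/2}}$, which is admissible because $d-\alpha<2$; and the nonlinear transport term is absorbed by (v). Estimate (v) is then a fractional product estimate: Proposition \ref{ineqs}(ii) is applied with $s_2=s_3=m-1+(d-\alpha)/2$ and $s_1=k-1+(d-\alpha)\le m-2+(d-\alpha)$, and the admissibility condition $s_1\le s_2+s_3-d/2$ reduces to $m\ge d/2$, which holds by hypothesis.

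The main obstacle is the bookkeeping in (ii): one must carefully track how the two outer derivatives distribute between the commutator bracket and the $1/\rho$ factor, and verify in each case that the prefactor of $\|\nabla h\|_{H^{m-1}}\|u\|_{H^{m+(d-\alpha)/2}}$ never exceeds the quadratic $(1+\|\nabla h\|_{H^{m-1}})^2$. Once that case analysis is set up, the remaining steps are a routine, if tedious, application of the Moser calculus, and the entire lemma reduces to assembling these pieces.
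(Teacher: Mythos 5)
Your overall plan is correct, and for parts (i)--(iii) it is essentially the paper's argument: Leibniz on the outer derivatives, $L^\infty$--$L^2$ splitting via Sobolev embedding ($m-1>d/2+1$), and the Moser commutator bound from Proposition~\ref{ineqs}(iv) for the bracket in (ii), with the $1/\rho^j$ factors handled by $1/2\le\rho\le3/2$ and a Fa\`a di Bruno/Moser estimate.

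Where you deviate from the paper is in (iv) and (v). The paper handles $\Lambda^{d-\alpha}\pa^{k-1}(u\cdot\nabla u)$ and $\Lambda^{d-\alpha}\pa^{k-1}(\nabla u:(\nabla u)^T)$ by peeling off a transport commutator $[\Lambda^{d-\alpha},u\cdot\nabla]$ (resp.\ $[\Lambda^{d-\alpha},\pa_{x_i}u\cdot\nabla]$) and invoking Lemma~\ref{comm_est2}, which forces one to choose the parameter $\e$ so that $d/2+1+(d-\alpha)+\e\le m+\tfrac{d-\alpha}{2}$; you instead apply the bilinear Sobolev estimate Proposition~\ref{ineqs}(ii) directly, which is a genuine simplification. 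Your parameter check for (v) is correct: with $s_1=k-1+d-\alpha$ and $s_2=s_3=m-1+\tfrac{d-\alpha}{2}$, the constraint $s_1\le s_2$ becomes $k+\tfrac{d-\alpha}{2}\le m$ (true since $k\le m-1$, $d-\alpha<2$), and $s_1\le s_2+s_3-d/2$ becomes $k\le 2m-1-d/2$ (true since $m>d/2+2$). One small imprecision: in (iv) the nonlinear term is $\Lambda^{d-\alpha}\pa^{k-1}(u\cdot\nabla u)$, which is \emph{not} the quantity in (v); saying it is ``absorbed by (v)'' is wrong as stated. What is true, and what you clearly intend, is that the \emph{same} direct bilinear estimate applies: taking $s_1=k-1+d-\alpha$, $s_2=m+\tfrac{d-\alpha}{2}$ (for $u$), $s_3=m-1+\tfrac{d-\alpha}{2}$ (for $\nabla u$) gives $\|u\cdot\nabla u\|_{H^{k-1+d-\alpha}}\lesssim\|u\|_{H^{m+\frac{d-\alpha}{2}}}^2$, with the admissibility check passing under the same hypotheses. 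With that cross-reference corrected, the proposal is a valid proof.
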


\begin{lemma}\label{h_decay}
Let $T>0$, $m > \frac d2+2$, and $(h,u) \in \mc([0,T); X^m)$  be a solution to the system \eqref{np_ER2} satisfying
\[
\sup_{0\le t \le T} \|h(t)\|_{L^\infty} \le \frac12.
\]
 Then we have
\[\begin{aligned}
\frac{d}{dt}&\int_\om\frac1\rho \pa^k \nabla h \cdot \Lambda^{d-\alpha} \pa^k u\,dx + \frac12 \|\pa^k \nabla h\|_{L^2}^2\\
&\le C(1+\|\nabla h\|_{H^{m-1}})^2 (\|\nabla h\|_{H^{m-1}}^3 + \|u\|_{H^{m+\frac{d-\alpha}{2}}}^3) +\|\Lambda^{\frac{d-\alpha}{2}}\pa^k (\nabla \cdot u)\|_{L^2}^2 + 6\|\Lambda^{d-\alpha}\pa^k u\|_{L^2}^2, 
\end{aligned}\]
for $1\le k\le m-1$, where $C=C(m,k,d,\alpha)$ is a positive constant independent of $T$.
\end{lemma}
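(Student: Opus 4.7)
The plan is to mimic the zeroth order calculation in Lemma \ref{h_decay0} at the level of the $k$-th derivative and use Lemma \ref{h_decay01} to absorb all the nonlinear residuals arising from the Leibniz rule. Differentiating the mixed quantity in time, I would split
\[
\frac{d}{dt}\int_\om \frac1\rho \pa^k\nabla h\cdot \Lambda^{d-\alpha}\pa^k u\,dx = \mathcal{J}_1+\mathcal{J}_2+\mathcal{J}_3,
\]
with $\mathcal{J}_1$ collecting $-\pa_t\rho/\rho^2$, $\mathcal{J}_2$ the term where $\pa_t$ hits $h$, and $\mathcal{J}_3$ the term where $\pa_t$ hits $u$. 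For $\mathcal{J}_1$ I would substitute $\pa_t \rho = -\nabla\cdot(\rho u)$ and integrate by parts, using Lemma \ref{h_decay01}(i) together with (iii) to bound the resulting quantity by $C(1+\|\nabla h\|_{H^{m-1}})^2(\|\nabla h\|_{H^{m-1}}+\|u\|_{H^{m+\frac{d-\alpha}{2}}})^3$.

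For $\mathcal{J}_2$, I integrate by parts in $\nabla$ and plug in $\pa^k\pa_t h=-\pa^k\nabla\cdot(\rho u)$. Writing $\pa^k(\rho u)=(\pa^k\rho)u+\rho\,\pa^k u+R_k$ with $R_k$ the Moser-type commutator residue, the principal piece $\rho\,\pa^k(\nabla\cdot u)$ paired against $\Lambda^{d-\alpha}\pa^k(\nabla\cdot u)/\rho$ produces exactly $\|\Lambda^{(d-\alpha)/2}\pa^k(\nabla\cdot u)\|_{L^2}^2$ by Plancherel. The remaining pieces, involving either $R_k$ or $(\pa^k\rho)u$ or $\nabla\rho\cdot\pa^k u$, are handled by Lemma \ref{h_decay01}(ii) after an additional integration by parts (to gain one derivative that is then passed onto $\Lambda^{d-\alpha}\pa^k u/\rho$). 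For $\mathcal{J}_3$, I use the momentum equation $\pa_t u= -u\cdot\nabla u-u-\nabla\Lambda^{\alpha-d}h$. The Riesz contribution gives
\[
-\int_\om \frac1\rho\, \pa^k\nabla h\cdot \Lambda^{d-\alpha}\pa^k\nabla\Lambda^{\alpha-d} h\,dx = -\int_\om \frac1\rho|\pa^k\nabla h|^2\,dx \le -\tfrac{2}{3}\|\pa^k\nabla h\|_{L^2}^2
\]
thanks to $\|h\|_{L^\infty}\le 1/2$; the damping term is absorbed by $6\|\Lambda^{d-\alpha}\pa^k u\|_{L^2}^2$ through Young's inequality; and the convection term is controlled by Lemma \ref{h_decay01}(v) after integrating $\Lambda^{d-\alpha}$ by parts (transferring one order of the operator onto $\pa^k\nabla h/\rho$, whose derivative is estimated by Moser together with (iii)).

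A small subtlety is that one of the terms produced by the Leibniz rule carries $k+1$ derivatives on $u$ in a bad configuration; the standard fix (cf.\ Lemma \ref{h_decay0}) is to integrate by parts in space first, and if a residual $\Lambda^{d-\alpha}\pa^{k-1}\pa_t u$ is forced out, control it directly via Lemma \ref{h_decay01}(iv), which ultimately feeds back only $\|u\|_{H^{m+(d-\alpha)/2}}$ and $\|\nabla h\|_{H^{m-1}}$. Combining $\mathcal{J}_1,\mathcal{J}_2,\mathcal{J}_3$, absorbing $-\tfrac{2}{3}\|\pa^k\nabla h\|_{L^2}^2+\tfrac16\|\pa^k\nabla h\|_{L^2}^2\le -\tfrac12\|\pa^k\nabla h\|_{L^2}^2$ after Young's inequality on the cross terms, and grouping all cubic residues into $C(1+\|\nabla h\|_{H^{m-1}})^2(\|\nabla h\|_{H^{m-1}}^3+\|u\|_{H^{m+(d-\alpha)/2}}^3)$, yields the claimed bound.

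The main obstacle I expect is bookkeeping: not a single estimate is deep, but one has to be careful that every term with $k+1$ derivatives on $h$ or with $\Lambda^{d-\alpha}$ hitting $\pa^k\nabla$ lands on the principal dissipation $-\frac12\|\pa^k\nabla h\|_{L^2}^2$ or on the two allowed residues $\|\Lambda^{(d-\alpha)/2}\pa^k(\nabla\cdot u)\|_{L^2}^2$ and $6\|\Lambda^{d-\alpha}\pa^k u\|_{L^2}^2$, and never on an unbounded quantity. The restriction $k\le m-1$ is precisely what allows $\pa^k\nabla h$ to still be controlled in $L^2$ by $\|\nabla h\|_{H^{m-1}}$, so that Lemma \ref{h_decay01} applies across the board.
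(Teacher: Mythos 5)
The overall architecture of your sketch matches the paper's, but there is a genuine gap in your treatment of $\mathcal{J}_2$: you claim the piece corresponding to $(\partial^k\rho)\,u$ inside $\partial^k(\rho u)$ is ``handled by Lemma \ref{h_decay01}(ii) after an additional integration by parts.'' That cannot work. After $\partial^k(\rho u)$ is substituted into $-\int_\Omega \frac1\rho\,\partial^k(\rho u)\cdot\nabla\Lambda^{d-\alpha}\partial^k(\nabla\cdot u)\,dx$, the $(\partial^k\rho)u$ piece is
\[
-\int_\Omega \frac1\rho\,(\partial^k h)\,u\cdot\nabla\Lambda^{d-\alpha}\partial^k(\nabla\cdot u)\,dx,
\]
which carries $k+2+(d-\alpha)$ derivatives on $u$. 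For $k=m-1$ this is $m+1+(d-\alpha)>m+\frac{d-\alpha}{2}$, so it cannot be bounded as it stands; and moving derivatives onto the other factor by integration by parts (one derivative, two derivatives, or a split of $\Lambda^{d-\alpha}$) either still leaves more than $m+\frac{d-\alpha}{2}$ derivatives on $u$, or instead produces more than $m$ derivatives on $h$. Moreover, Lemma \ref{h_decay01}(ii) is stated only for the Moser commutator residue $\partial^k(\rho u)-(\partial^k\rho)u-\rho\,\partial^k u$, so it does not even address this term.

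The paper handles this term not by estimating it at all, but by a cancellation between $\mathcal{J}_2$ and $\mathcal{J}_3$: in the expansion of $\mathcal{J}_3$, the convection contribution $u\cdot\nabla(\nabla\cdot u)$ inside $\nabla\cdot(u\cdot\nabla u)$, once one derivative is peeled off $\partial^k$ and the product rule is applied, releases the term
\[
+\int_\Omega \frac1\rho\,(\partial^k h)\,u\cdot\nabla\Lambda^{d-\alpha}\partial^k(\nabla\cdot u)\,dx,
\]
which exactly cancels $\mathcal{J}_{24}$. Without identifying this cancellation, your bookkeeping cannot close, since no integration by parts or lemma in the paper's toolkit can tame a term with $\partial^{k+1}\Lambda^{d-\alpha}(\nabla\cdot u)$ when $k=m-1$. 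The rest of your plan --- the Plancherel identification of $\|\Lambda^{(d-\alpha)/2}\partial^k(\nabla\cdot u)\|_{L^2}^2$, the use of Lemma \ref{h_decay01}(iv) for $\Lambda^{d-\alpha}\partial^{k-1}\partial_t u$, the split of the good dissipation into $-\frac23+\frac16=-\frac12$ --- is in line with the paper, so this is the one essential idea you are missing.
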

\begin{proof} Throughout this proof, $C>0$ denotes the generic constant depending only on $m,k,d$, and $\alpha$, independent of $T$.

Direct computation yields
\[\begin{aligned}
\frac{d}{dt}&\int_\om\frac1\rho \pa^k \nabla h \cdot \Lambda^{d-\alpha} \pa^k u\,dx\\
&=-\int_\om \frac{\pa_t \rho}{\rho^2} \pa^k \nabla h \cdot \Lambda^{d-\alpha} \pa^k u\,dx + \int_\om \frac1\rho \pa^k \nabla \pa_t h\cdot \Lambda^{d-\alpha} \pa^k u\,dx  + \int_\om \frac1\rho \pa^k \nabla h \cdot \Lambda^{d-\alpha} \pa^k \pa_t u\,dx\\
&=: \mathcal{J}_1 + \mathcal{J}_2 + \mathcal{J}_3.
\end{aligned}\]
We estimate $\mathcal{J}_1$, $\mathcal{J}_2$ and $\mathcal{J}_3$ one by one as follows.

\medskip

\noindent $\diamond$ (Estimates for $\mathcal{J}_1$): One obtains
\[\begin{aligned}
\mathcal{J}_1 &= \int_\om \frac{\nabla \cdot (\rho u)}{\rho^2} \pa^k \nabla h \cdot \Lambda^{d-\alpha} \pa^k u\,dx \\
&= \int_\om \frac{\nabla \cdot u}{\rho} \pa^k \nabla h \cdot \Lambda^{d-\alpha} \pa^k u\,dx + \int_\om \frac{\nabla h \cdot u}{\rho^2} \pa^k \nabla h \cdot \Lambda^{d-\alpha} \pa^k u\,dx\\
&\le \frac{\|\nabla u\|_{L^\infty}}{1-\|h\|_{L^\infty}}\|\pa^k \nabla h\|_{L^2}\|\Lambda^{d-\alpha} \pa^k u\|_{L^2} + \frac{\|\nabla h\|_{L^\infty}\|u\|_{L^\infty}}{(1-\|h\|_{L^\infty})^2}\|\pa^k \nabla h\|_{L^2} \|\Lambda^{d-\alpha} \pa^k u\|_{L^2}\\
&\le C(1+\|\nabla h\|_{H^{m-1}})\|\nabla h\|_{H^{m-1}} \|u\|_{H^{m+\frac{d-\alpha}{2}}}^2.
\end{aligned}\]

\noindent $\diamond$ (Estimates for $\mathcal{J}_2$):  We get
\begin{align*}
\mathcal{J}_2&= \int_\om \frac{1}{\rho^2} \pa^k \pa_t h \nabla h \cdot \Lambda^{d-\alpha} \pa^k u\,dx -\int_\om \frac1\rho \pa^k \pa_t h \Lambda^{d-\alpha} \pa^k (\nabla \cdot u)\,dx\\
&=- \int_\om \frac{1}{\rho^2} \pa^k (\nabla \cdot (hu)) \nabla h \cdot \Lambda^{d-\alpha} \pa^k u\,dx -\int_\om \frac{1}{\rho^2} \pa^k (\nabla \cdot u) \nabla h \cdot \Lambda^{d-\alpha} \pa^k u\,dx \\
&\quad + \int_\om \frac1\rho \pa^k (\nabla\cdot(\rho u)) \Lambda^{d-\alpha} \pa^k (\nabla \cdot u)\,dx\\
&=- \int_\om \frac{1}{\rho^2} \pa^k (\nabla \cdot (hu)) \nabla h \cdot \Lambda^{d-\alpha} \pa^k u\,dx -\int_\om \frac{1}{\rho^2} \pa^k (\nabla \cdot u) \nabla h \cdot \Lambda^{d-\alpha} \pa^k u\,dx \\
&\quad + \int_\om \frac{\nabla h}{\rho^2} \cdot \pa^k (\rho u) \Lambda^{d-\alpha}\pa^k(\nabla \cdot u)\,dx - \int_\om \frac1\rho \pa^k (\rho u)\cdot \nabla \Lambda^{d-\alpha} \pa^k (\nabla \cdot u)\,dx\\
&=- \int_\om \frac{1}{\rho^2} \pa^k (\nabla \cdot (hu)) \nabla h \cdot \Lambda^{d-\alpha} \pa^k u\,dx -\int_\om \frac{1}{\rho^2} \pa^k (\nabla \cdot u) \nabla h \cdot \Lambda^{d-\alpha} \pa^k u\,dx \\
&\quad - \int_\om \nabla\lt(\frac{\nabla h}{\rho^2} \cdot \pa^k (\rho u) \rt)\cdot\Lambda^{d-\alpha}\pa^k u\,dx - \int_\om \frac1\rho (\pa^k h) u\cdot \nabla \Lambda^{d-\alpha} \pa^k (\nabla \cdot u)\,dx\\
&\quad -\int_\om \pa^k u \cdot \nabla \Lambda^{d-\alpha} \pa^k (\nabla \cdot u)\,dx -\int_\om \frac1\rho \lt( \pa^k(\rho u) -(\pa^k \rho)u - \rho (\pa^k u)\rt)\cdot \nabla \Lambda^{d-\alpha} \pa^k (\nabla \cdot u)\,dx\\
&=: \sum_{i=1}^6\mathcal{J}_{2i}.
\end{align*}
For $\mathcal{J}_{21}$, we use Moser-type inequality in Proposition \ref{ineqs} to get
\begin{align*}
\mathcal{J}_{21} &= -\int_\om \frac{1}{\rho^2} \pa^k (\nabla h \cdot u) \nabla h \cdot \Lambda^{d-\alpha} \pa^k u\,dx -\int_\om \frac{1}{\rho^2} \lt(\pa^k (h \nabla \cdot u) - h\pa^k (\nabla \cdot u)\rt) \nabla h \cdot \Lambda^{d-\alpha} \pa^k u\,dx \\
&\quad -  \int_\om \frac{1}{\rho^2} h\pa^k (\nabla \cdot u) \nabla h \cdot \Lambda^{d-\alpha} \pa^k u\,dx \\
&\le  \frac{\|\nabla h\|_{L^\infty}}{(1-\|h\|_{L^\infty})^2}\lt(\|\pa^k (\nabla h \cdot u))\|_{L^2} + \|\pa^k ( h \nabla \cdot u)) - h \pa^k (\nabla \cdot u)\|_{L^2}\rt)\|\Lambda^{d-\alpha} \pa^k u\|_{L^2}\\
&\quad -  \int_\om \frac{1}{\rho^2} h\pa^k (\nabla \cdot u) \nabla h \cdot \Lambda^{d-\alpha} \pa^k u\,dx \\
&\le C\|\nabla h\|_{L^\infty} \lt( \|\nabla h\|_{L^\infty} \|\pa^{k} u\|_{H^1} + \|u\|_{W^{1,\infty}}\|\pa^{k} h\|_{H^1}\rt)\|\Lambda^{d-\alpha} \pa^k u\|_{L^2}\\
&\quad -  \int_\om \frac{1}{\rho^2} h\pa^k (\nabla \cdot u) \nabla h \cdot \Lambda^{d-\alpha} \pa^k u\,dx \\
&\le C\|\nabla h\|_{H^{m-1}}^2 \|u\|_{H^{m+\frac{d-\alpha}{2}}}^2-  \int_\om \frac{1}{\rho^2} h\pa^k (\nabla \cdot u) \nabla h \cdot \Lambda^{d-\alpha} \pa^k u\,dx.
\end{align*}
This implies
\[\begin{aligned}
\mathcal{J}_{21} +\mathcal{J}_{22} &\le  C\|\nabla h\|_{H^{m-1}}^2 \|u\|_{H^{m+\frac{d-\alpha}{2}}}^2-  \int_\om \frac{1}{\rho} \pa^k (\nabla \cdot u) \nabla h \cdot \Lambda^{d-\alpha} \pa^k u\,dx \le \|\nabla h\|_{H^{m-1}}^2 \|u\|_{H^{m+\frac{d-\alpha}{2}}}^2.
\end{aligned}\]
For $\mathcal{J}_{23}$, we use H\"older's inequality and Lemma \ref{h_decay01} (i) to obtain
\[
\mathcal{J}_{23} \leq \lt\|\nabla\lt(\frac{\nabla h}{\rho^2} \cdot \pa^k (\rho u) \rt) \rt\|_{L^2}\|\Lambda^{d-\alpha} \pa^k u\|_{L^2} \leq C\lt(1+\|\nabla h\|_{H^{m-1}} \rt)^2 \|\nabla h\|_{H^{m-1}} \|u\|_{H^{m+\frac{d-\alpha}{2}}}^2.
\]
We next use the integration by parts to deduce
\[
\mathcal{J}_{25} = \|\Lambda^{\frac{d-\alpha}{2}} \pa^k (\nabla \cdot u)\|_{L^2}^2.
\]
For the estimate of $\mathcal{J}_{26}$, we use Lemma \ref{h_decay01} (ii) to deduce
\begin{align*}
\mathcal{J}_{26} &= -\int_\om \nabla\lt[ \nabla\cdot\lt( \frac1\rho \lt( \pa^k (\rho u) - (\pa^k \rho) u - \rho (\pa^k u)\rt)\rt)\rt] \cdot \Lambda^{d-\alpha}\pa^k u\,dx\\
&\le \lt\|\nabla\lt[ \nabla\cdot\lt( \frac1\rho \lt( \pa^k (\rho u) - (\pa^k \rho) u - \rho (\pa^k u)\rt)\rt)\rt] \rt\|_{L^2}  \|\Lambda^{d-\alpha} \pa^k u\|_{L^2}\\
&\le C\lt( 1+ \|\nabla h\|_{H^{m-1}}\rt)^2\|\nabla h\|_{H^{m-1}}\|u\|_{H^{m+\frac{d-\alpha}{2}}}^2.
\end{align*}

 Hence, we gather the estimates for $\mathcal{J}_{2i}$'s to yield
\[
\begin{aligned}
\mathcal{J}_2 &\le - \int_\om \frac1\rho (\pa^k h) u\cdot \nabla \Lambda^{d-\alpha} \pa^k (\nabla \cdot u)\,dx + \|\Lambda^{\frac{d-\alpha}{2}} \pa^k (\nabla \cdot u)\|_{L^2}^2 \cr
&\quad + C\lt( 1+ \|\nabla h\|_{H^{m-1}}\rt)^2\|\nabla h\|_{H^{m-1}}\|u\|_{H^{m+\frac{d-\alpha}{2}}}^2.
\end{aligned}
\]

\medskip

\noindent $\diamond$ (Estimates for $\mathcal{J}_3$): In this case,
\begin{align*}
\mathcal{J}_3 &= \int_\om \frac{1}{\rho^2} (\pa^k h) \nabla h \cdot \Lambda^{d-\alpha} \pa^k \pa_t u\,dx - \int_\om \frac1\rho \pa^k h \Lambda^{d-\alpha} \pa^k \pa_t (\nabla \cdot u)\,dx\\
&= \int_\om \frac{1}{\rho^2} (\pa^k h) \nabla h \cdot \Lambda^{d-\alpha} \pa^k \pa_t u\,dx  +\int_\om \frac1\rho (\pa^k h) \Lambda^{d-\alpha}\pa^k (\nabla \cdot (u\cdot \nabla u) + \nabla \cdot u + \Lambda^{\alpha-d} \Delta h)\,dx\\
&= \int_\om \frac{1}{\rho^2} (\pa^k h) \nabla h \cdot \Lambda^{d-\alpha} \pa^k \pa_t u\,dx + \int_\om \frac1\rho (\pa^k h)  \Lambda^{d-\alpha}\pa^k (\nabla \cdot (u \cdot \nabla u))\,dx\\
&\quad + \int_\om \frac1\rho (\pa^k h)\Lambda^{d-\alpha}\pa^k (\nabla \cdot u)\,dx + \int_\om \frac1\rho (\pa^k h)\Delta (\pa^k h)\,dx\\
&=: \sum_{i=1}^4 \mathcal{J}_{3i}.
\end{align*}
For the term $\mathcal{J}_{31}$, we use Lemma \ref{h_decay01} (iii) \& (iv) to have
\begin{align*}
\mathcal{J}_{31} &= \int_\om \pa\lt(\frac{1}{\rho^2} (\pa^k h) \nabla h\rt) \cdot \Lambda^{d-\alpha}\pa^{k-1} \pa_t u\,dx\\
&\leq \lt\| \pa\lt(\frac{1}{\rho^2} (\pa^k h) \nabla h\rt) \rt\|_{L^2}  \lt\|\Lambda^{d-\alpha}\pa^{k-1} \pa_t u\rt\|_{L^2} \cr
&\le C\lt(1+\|\nabla h\|_{H^{m-1}}\rt)\|\nabla h\|_{H^{m-1}}^2\lt(\|u\|_{H^{m+\frac{d-\alpha}{2}}}^2 + \|u\|_{H^{m+\frac{d-\alpha}{2}}} + \|\nabla h\|_{H^{m-1}}\rt)\\
&\le C \lt(1+\|\nabla h\|_{H^{m-1}} + \|u\|_{H^{m+\frac{d-\alpha}{2}}}\rt)\|\nabla h\|_{H^{m-1}}^2\lt(\|u\|_{H^{m+\frac{d-\alpha}{2}}}  + \|\nabla h\|_{H^{m-1}}\rt).
\end{align*}

We then estimate $\mathcal{J}_{32}$ as 
\begin{align*}
\mathcal{J}_{32} &= -\int_\om \pa\lt( \frac1\rho (\pa^k h)\rt) \Lambda^{d-\alpha} \pa^{k-1}( \nabla \cdot (u \cdot \nabla u))\,dx\\
&= -\int_\om \pa\lt( \frac1\rho (\pa^k h)\rt) \Lambda^{d-\alpha} \pa^{k-1}( (u \cdot \nabla) (\nabla \cdot u))\,dx\\
&\quad -\int_\om \pa\lt( \frac1\rho (\pa^k h)\rt) \Lambda^{d-\alpha} \pa^{k-1} (\nabla \cdot (u \cdot \nabla u)-u \cdot \nabla (\nabla \cdot u))\,dx\\
&= -\int_\om \pa\lt( \frac1\rho (\pa^k h)\rt) u\cdot \nabla \Lambda^{d-\alpha} \pa^{k-1} (\nabla \cdot u)\,dx - \int_\om \pa\lt( \frac1\rho (\pa^k h)\rt) [\Lambda^{d-\alpha}, u \cdot \nabla]\pa^{k-1}(\nabla \cdot u)\,dx\\
&\quad - \int_\om \pa\lt( \frac1\rho (\pa^k h)\rt) \Lambda^{d-\alpha}\lt[\pa^{k-1} (u \cdot \nabla(\nabla \cdot u)) - u \cdot \nabla \pa^{k-1}(\nabla \cdot u) \rt]\,dx\\
&\quad  -\int_\om \pa\lt( \frac1\rho (\pa^k h)\rt) \Lambda^{d-\alpha} \pa^{k-1} (\nabla \cdot (u \cdot \nabla u)-u \cdot \nabla (\nabla \cdot u))\,dx\\
&=:\sum_{i=1}^4 \mathcal{J}_{32i}.
\end{align*}
First, we estimate $\mathcal{J}_{321}$ as
\begin{align*}
\mathcal{J}_{321} &=  \int_\om  \frac1\rho (\pa^k h)\pa u\cdot \nabla \Lambda^{d-\alpha} \pa^{k-1} (\nabla \cdot u)\,dx + \int_\om \frac1\rho (\pa^k h) u\cdot \nabla \Lambda^{d-\alpha} \pa^{k} (\nabla \cdot u)\,dx\\
&=  -\int_\om  \nabla \cdot \lt(\frac1\rho (\pa^k h)\pa u\rt) \Lambda^{d-\alpha} \pa^{k-1} (\nabla \cdot u)\,dx + \int_\om \frac1\rho (\pa^k h) u\cdot \nabla \Lambda^{d-\alpha} \pa^{k} (\nabla \cdot u)\,dx\\
&\le C\lt( \frac{\|\nabla h\|_{L^\infty} \|\pa^k h\|_{L^2} \|\pa u\|_{L^\infty} }{(1-\|h\|_{L^\infty})^2} + \frac{\|\pa^k\nabla h\|_{L^2}\|\pa u\|_{L^\infty} + \|\pa^k h\|_{L^2}\|\nabla^2 u\|_{L^\infty}}{1-\|h\|_{L^\infty}}\rt) \|\Lambda^{d-\alpha} \pa^{k-1} (\nabla \cdot u)\|_{L^2}\\
&\quad + \int_\om \frac1\rho (\pa^k h) u\cdot \nabla \Lambda^{d-\alpha} \pa^{k} (\nabla \cdot u)\,dx\\
&\le C(1+\|\nabla h\|_{H^{m-1}}) \|\nabla h\|_{H^{m-1}}\|u\|_{H^{m+\frac{d-\alpha}{2}}}^2+ \int_\om \frac1\rho (\pa^k h) u\cdot \nabla \Lambda^{d-\alpha} \pa^{k} (\nabla \cdot u)\,dx.
\end{align*}
Next, Lemma \ref{comm_est2} yields
\[\begin{aligned}
\mathcal{J}_{322} &\le C\lt(\frac{\|\nabla h\|_{L^\infty} \|\pa^k h\|_{L^2}}{(1-\|h\|_{L^\infty})^2} + \frac{\|\pa^{k+1}h\|_{L^2}}{1-\|h\|_{L^\infty}} \rt)\|u\|_{H^{\frac{d}{2}+1+(d-\alpha)+\e }} \|\pa^{k-1} (\nabla \cdot u)\|_{H^{d-\alpha}}\\
&\le C(1+\|\nabla h\|_{H^{m-1}}) \|\nabla h\|_{H^{m-1}}\|u\|_{H^{m+\frac{d-\alpha}{2}}}^2,
\end{aligned}
\]
where $\e$ satisfies $(d-\alpha)/2 + \e \le 1$ so that $\frac{d}{2}+1+(d-\alpha)+\e \le m+\frac{d-\alpha}{2}$. \\

\noindent For $\mathcal{J}_{323}$, we need to estimate
\[
\|\Lambda^{d-\alpha} (\pa^\ell u \cdot \nabla \pa^{k-1-\ell} (\nabla \cdot u))\|_{L^2},
\]
for $1 \le \ell\le k-1$. For $\ell=1$, 

\[
\begin{aligned}
\|\Lambda^{d-\alpha} \pa u \cdot \nabla \pa^{k-2} (\nabla \cdot u)\|_{L^2} &\le \|\pa u \cdot \nabla \pa^{k-2} (\nabla \cdot u)\|_{H^{d-\alpha}}\\
&\le C\|\widehat{\nabla u}\|_{L_\xi^1}\|u\|_{H^{k+(d-\alpha)}} \le C\|u\|_{H^{m+\frac{d-\alpha}{2}}}^2.
\end{aligned}
\]
For $2 \le \ell \le k-1$, we use Proposition \ref{ineqs} to get
\[\begin{aligned}
\|\Lambda^{d-\alpha} \pa^\ell u \cdot \nabla \pa^{k-1-\ell} (\nabla \cdot u)\|_{L^2} &\le \|\pa^\ell u \cdot \nabla \pa^{k-1-\ell} (\nabla \cdot u)\|_{H^2}\\
&\le C \|\pa^\ell u\|_{H^{m-\ell}} \|\pa^{k-\ell+1}u\|_{H^{m-(k-\ell+1)}}\\
&\le C\|u\|_{H^{m+\frac{d-\alpha}{2}}}^2,
\end{aligned}\]
and this implies
\[\begin{aligned}
\mathcal{J}_{323}&\le C\lt(\frac{\|\nabla h\|_{L^\infty} \|\pa^k h\|_{L^2}}{(1-\|h\|_{L^\infty})^2} + \frac{\|\pa^{k+1}h\|_{L^2}}{1-\|h\|_{L^\infty}} \rt)\|u\|_{H^{m+\frac{d-\alpha}{2}}}^2\\
& \le C(1+\|\nabla h\|_{H^{m-1}}) \|\nabla h\|_{H^{m-1}}\|u\|_{H^{m+\frac{d-\alpha}{2}}}^2.
\end{aligned}\]
Now, for $\mathcal{J}_{324}$, we use the following estimate
\[
\lt\| \pa\lt( \frac1\rho (\pa^k h)\rt) \rt\|_{L^2} \leq C\lt(\frac{\|\nabla h\|_{L^\infty} \|\pa^k h\|_{L^2}}{(1-\|h\|_{L^\infty})^2} + \frac{\|\pa^{k+1}h\|_{L^2}}{1-\|h\|_{L^\infty}} \rt) \leq C(1+\|\nabla h\|_{H^{m-1}}) \|\nabla h\|_{H^{m-1}}
\]
together with Lemma \ref{h_decay01} (v) to have
\begin{align*}
\mathcal{J}_{324} &= -\sum_{i,j=1}^d \int_\om  \pa\lt( \frac1\rho (\pa^k h)\rt) \Lambda^{d-\alpha} \pa^{k-1}  (\pa_{x_i} u_j \pa_{x_j} u_i)\,dx\\
&= -   \int_\om  \pa\lt( \frac1\rho (\pa^k h)\rt) \Lambda^{d-\alpha} \pa^{k-1}  (\nabla u : (\nabla  u)^T)\,dx\\
&\leq \lt\| \pa\lt( \frac1\rho (\pa^k h)\rt) \rt\|_{L^2} \|\Lambda^{d-\alpha} \pa^{k-1}  (\nabla u : (\nabla  u)^T)\|_{L^2}\cr
&\le C(1+\|\nabla h\|_{H^{m-1}}) \|\nabla h\|_{H^{m-1}}\|u\|_{H^{m+\frac{d-\alpha}{2}}}^2.
\end{align*} 

Then we collect the estimates for $\mathcal{J}_{32i}$'s to yield
\[
\mathcal{J}_{32} \le C(1+\|\nabla h\|_{H^{m-1}}) \|\nabla h\|_{H^{m-1}}\|u\|_{H^{m+\frac{d-\alpha}{2}}}^2 + \int_\om \frac1\rho (\pa^k h) u\cdot \nabla \Lambda^{d-\alpha} \pa^{k} (\nabla \cdot u)\,dx.
\]
For $\mathcal{J}_{33}$ and $\mathcal{J}_{34}$,
\[\begin{aligned}
\mathcal{J}_{33} &= \int_\om \frac{1}{\rho^2}(\pa^k h) \nabla h \cdot \Lambda^{d-\alpha}\pa^k u\,dx - \int_\om \frac1\rho \pa^k \nabla h \cdot \Lambda^{d-\alpha} \pa^k u\,dx\\
&\le \frac{\|\nabla h\|_{L^\infty}}{(1-\|h\|_{L^\infty})^2}\|\pa^k h\|_{L^2}\|\Lambda^{d-\alpha} \pa^k u\|_{L^2} + \frac{1}{1-\|h\|_{L^\infty}} \|\pa^k \nabla h\|_{L^2}\|\Lambda^{d-\alpha} \pa^k u\|_{L^2}\\
&\le C\|\nabla h\|_{H^{m-1}}^2 \|u\|_{H^{m+\frac{d-\alpha}{2}}} + \frac16 \|\pa^k \nabla h\|_{L^2}^2 + 6\|\Lambda^{d-\alpha} \pa^k u\|_{L^2}^2,\\
\mathcal{J}_{34} &= \int_\om \frac{1}{\rho^2}(\pa^k h) \nabla h \cdot \pa^k \nabla h\,dx - \int_\om \frac1\rho |\pa^k \nabla h|^2\,dx\\
&\le \frac{\|\nabla h\|_{L^\infty}}{(1-\|h\|_{L^\infty})^2}\|\pa^k h\|_{L^2}\|\pa^k \nabla h\|_{L^2} - \frac{1}{1+\|h\|_{L^\infty}}\|\pa^k \nabla h\|_{L^2}^2\\
&\le C\|\nabla h\|_{H^{m-1}}^3 -\frac{2}{3}\|\pa^k \nabla h\|_{L^2}^2.
\end{aligned}
\]
Thus, we combine the estimates for $\mathcal{J}_{3i}$'s to obtain
\[\begin{aligned}
\mathcal{J}_3 &\le C(1+\|\nabla h\|_{H^{m-1}})(\|\nabla h\|_{H^{m-1}}^3 + \|u\|_{H^{m+\frac{d-\alpha}{2}}}^3) -\frac12\|\pa^k \nabla h\|_{L^2}^2 + 6\|\Lambda^{d-\alpha} \pa^k u\|_{L^2}^2\\
&\quad + \int_\om \frac1\rho (\pa^k h) u\cdot \nabla \Lambda^{d-\alpha} \pa^{k} (\nabla \cdot u)\,dx.
\end{aligned}\]
Therefore, we gather all the results for $\mathcal{J}_i$'s to have
\[\begin{aligned}
\frac{d}{dt}&\int_\om\frac1\rho \pa^k \nabla h \cdot \Lambda^{d-\alpha} \pa^k u\,dx + \frac12 \|\pa^k \nabla h\|_{L^2}^2\\
&\le C(1+\|\nabla h\|_{H^{m-1}})^2 (\|\nabla h\|_{H^{m-1}}^3 + \|u\|_{H^{m+\frac{d-\alpha}{2}}}^3) +\|\Lambda^{\frac{d-\alpha}{2}}\pa^k (\nabla \cdot u)\|_{L^2}^2 + 6\|\Lambda^{d-\alpha}\pa^k u\|_{L^2}^2.
\end{aligned}\]
\end{proof}

Based on the results up to now, we provide the uniform-in-time bound estimate of solutions below.
\begin{proposition}\label{P2.1}
Let $T>0$, $m > \frac d2+2$, and $(h,u) \in \mc([0,T); X^m)$  be a solution to the system \eqref{np_ER2}. Suppose that $X(T;m)\le \e_0^2 \ll 1$ so that
\[
\sup_{0\le t \le T} \|h(t)\|_{L^\infty} \le \frac12.
\]
 Then there exists a positive constant $C^*$ independent of $T$ such that
\[
X(T;m) \le C^*X_0(m).
\]
 \end{proposition}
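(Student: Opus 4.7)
The plan is to combine all the ingredients gathered in Lemmas~\ref{est_u}, \ref{est_h}, \ref{est_h_0}, \ref{h_decay0}, \ref{h_decay}, and Proposition~\ref{zeroth} into a single coercive Lyapunov functional whose time derivative controls the full dissipation. The core observation is that the cross terms $\pm\int_\Omega \Lambda^{\frac{\alpha-d}{2}}\nabla R_k\cdot U_k\,dx$ appearing on the right-hand sides of Lemmas~\ref{est_u} and \ref{est_h}--\ref{est_h_0} cancel exactly once we add the $u$- and $h$-estimates at matching order $k$, which is the reason we measure $h$ in the modified norm \eqref{def_tH}.

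\textbf{Step 1 (basic energy).} Summing the estimate of Lemma~\ref{est_u} over $0\le k\le m$ together with Lemmas~\ref{est_h} ($1\le k\le m$) and \ref{est_h_0}, and adding the conserved quantity from Proposition~\ref{zeroth} (which supplies $\|h\|_{\dot H^{-(d-\alpha)/2}}^2$ through $\int (\rho-1)\Lambda^{\alpha-d}(\rho-1)\,dx$), I define
\[
\mathcal{E}_1(t):=\int_\Omega \rho|u|^2\,dx+\int_\Omega(\rho-1)\Lambda^{\alpha-d}(\rho-1)\,dx+\|h\|_{L^2}^2+\sum_{k=1}^m\left\|\rho^{-1/2}R_k\right\|_{L^2}^2+\sum_{k=0}^m\|U_k\|_{L^2}^2.
\]
Under $\|h\|_{L^\infty}\le 1/2$ the functional $\mathcal{E}_1$ is equivalent to $\|h\|_{L^2}^2+\|h\|_{\tilde H^m}^2+\|u\|_{H^{m+(d-\alpha)/2}}^2+\|h\|_{\dot H^{-(d-\alpha)/2}}^2\approx \|(h,u)\|_{X^m}^2$. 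The cross terms cancel pairwise, so
\[
\frac{d}{dt}\mathcal{E}_1+c_1\|u\|_{H^{m+(d-\alpha)/2}}^2\le C\,\sqrt{X(T;m)}\,\bigl(\|u\|_{H^{m+(d-\alpha)/2}}^2+\|\nabla h\|_{H^{m-1}}^2\bigr).
\]

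\textbf{Step 2 (hypocoercive mixed term).} To recover dissipation for $h$ I add, with a small weight $\delta>0$ to be chosen, the mixed quantity
\[
\mathcal{E}_2(t):=\int_\Omega\frac{1}{\rho}\,\nabla h\cdot\Lambda^{d-\alpha}u\,dx+\sum_{k=1}^{m-1}\int_\Omega\frac{1}{\rho}\,\partial^k\nabla h\cdot\Lambda^{d-\alpha}\partial^k u\,dx.
\]
By Cauchy--Schwarz and Proposition~\ref{ineqs}, $|\mathcal{E}_2|\le \tfrac12\|h\|_{H^m}\|u\|_{H^{m+(d-\alpha)/2}}\ll \mathcal{E}_1$ since $d-\alpha<1+(d-\alpha)/2\le m+(d-\alpha)/2$, so $\mathcal{E}_1+\delta\mathcal{E}_2\approx\mathcal{E}_1$ for $\delta$ small. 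Summing Lemmas~\ref{h_decay0} and \ref{h_decay} yields
\[
\frac{d}{dt}\mathcal{E}_2+\tfrac12\|\nabla h\|_{H^{m-1}}^2\le C\bigl(1+\sqrt{X(T;m)}\bigr)^2\sqrt{X(T;m)}\,\mathcal{D}+\sum_{k=0}^{m-1}\bigl(\|\Lambda^{(d-\alpha)/2}\partial^k\nabla\cdot u\|_{L^2}^2+6\|\Lambda^{d-\alpha}\partial^k u\|_{L^2}^2\bigr),
\]
where $\mathcal{D}:=\|u\|_{H^{m+(d-\alpha)/2}}^2+\|\nabla h\|_{H^{m-1}}^2$. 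Because $d-\alpha< m+(d-\alpha)/2$, the velocity terms on the right are bounded by $C\|u\|_{H^{m+(d-\alpha)/2}}^2$ via interpolation (Proposition~\ref{ineqs}(i)).

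\textbf{Step 3 (closing the Gronwall loop).} Setting $\mathcal{L}:=\mathcal{E}_1+\delta\mathcal{E}_2$ and choosing $\delta$ small enough that $\delta\cdot C \le c_1/2$, I obtain
\[
\frac{d}{dt}\mathcal{L}+\tfrac{c_1}{2}\|u\|_{H^{m+(d-\alpha)/2}}^2+\tfrac{\delta}{2}\|\nabla h\|_{H^{m-1}}^2\le C\bigl(\sqrt{X(T;m)}+X(T;m)\bigr)\bigl(\|u\|_{H^{m+(d-\alpha)/2}}^2+\|\nabla h\|_{H^{m-1}}^2\bigr).
\]
Under the smallness assumption $X(T;m)\le \varepsilon_0^2\ll 1$, the right-hand side is absorbed into the dissipation, giving $\frac{d}{dt}\mathcal{L}\le 0$ and consequently $\mathcal{L}(t)\le \mathcal{L}(0)$. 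The equivalence $\mathcal{L}\approx \|(h,u)\|_{X^m}^2$ then yields the claim $X(T;m)\le C^* X_0(m)$.

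\textbf{Main obstacle.} The delicate point is the absorption step: the hypocoercivity estimate necessarily produces the ``bad'' terms $\|\Lambda^{d-\alpha}\partial^k u\|_{L^2}^2$ on its right-hand side (coming from the damping contribution $-u$ in the momentum equation), and these must be dominated by the genuine dissipation of $u$ in $H^{m+(d-\alpha)/2}$ \emph{without} any smallness factor. Verifying that $d-\alpha$ is strictly less than the available Sobolev order $m+(d-\alpha)/2$ --- which is automatic since $\alpha>d-2$ forces $d-\alpha<2\le m-d/2\le m+(d-\alpha)/2$ --- is what makes the bootstrap go through, together with checking that $\mathcal{E}_2$ is quantitatively dominated by $\mathcal{E}_1$ so the modified energy is still coercive.
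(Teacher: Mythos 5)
Your proposal is correct and follows essentially the same strategy as the paper: combine the damping energy estimates (Lemmas~\ref{est_u}, \ref{est_h}, \ref{est_h_0}, Proposition~\ref{zeroth}) with a small multiple $\delta$ of the hypocoercive cross functional (Lemmas~\ref{h_decay0}, \ref{h_decay}), absorb the resulting $\|\Lambda^{d-\alpha}\partial^k u\|_{L^2}^2$ terms into the $u$-dissipation by choosing $\delta$ small, and close via Gr\"onwall under the smallness of $X(T;m)$. The paper's functional $\mathcal{Y}^m$ plays the role of your $\mathcal{L}$, and its identity $\|\Lambda^{d-\alpha}\partial^k u\|_{L^2} = \|\Lambda^{(d-\alpha)/2}U_k\|_{L^2}\le\|U_k\|_{H^1}\lesssim\|U_k\|_{L^2}+\|U_{k+1}\|_{L^2}$ (valid for $k\le m-1$ since $d-\alpha<2$) is the cleaner form of your interpolation observation in Step 2.
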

  \begin{proof}  
Applying Lemma \ref{h_decay} and \eqref{equiv_u} implies that we can find $C_1>0$ independent of $T$ such that
\begin{align}\label{est_mix}
\begin{aligned}
\frac{d}{dt}&\lt(\sum_{0 \le k \le m-1} \int_\om \frac1\rho \pa^k \nabla h \cdot \Lambda^{d-\alpha} \pa^k u\,dx \rt) + \frac12\|\nabla h\|_{H^{m-1}}^2\\
&\le C(1+\|\nabla h\|_{H^{m-1}})^2 (\|\nabla h\|_{H^{m-1}}^3 + \|u\|_{H^{m+\frac{d-\alpha}{2}}}^3) \cr
&\quad +2\sum_{0\le k \le m-1}\|(\nabla \cdot U_k)\|_{L^2}^2 + 6\sum_{0\le k \le m-1}\|\Lambda^{d-\alpha}\pa^k u\|_{L^2}^2\\
&\le C(1+\|\nabla h\|_{H^{m-1}})^2 (\|\nabla h\|_{H^{m-1}}^3 + \|u\|_{H^{m+\frac{d-\alpha}{2}}}^3) + C_1 \sum_{0\le k \le m}\|U_k\|_{L^2}^2,
\end{aligned}
\end{align}
where we used
\[
\|\Lambda^{d-\alpha}\pa^k u\|_{L^2} = \|\Lambda^{\frac{d-\alpha}{2}}U_k\|_{L^2} \leq \|U_k\|_{H^1}.
\]
On the other hand, it follows from Lemmas \ref{est_u} and \ref{est_h} that 
\begin{align}\label{est_mix2}
\begin{aligned}
&\frac{d}{dt}\lt[\sum_{1 \le k \le m} \lt(\|U_k\|_{L^2}^2 +\lt\|\frac{1}{\sqrt{\rho}} \pa^k h\rt\|_{L^2}^2\rt)\rt] + 2\sum_{1 \le k \le m} \|U_k\|_{L^2}^2 \cr
&\quad \leq C(1+\|\nabla h\|_{H^{m-1}})^{2(m-1)} (\|\nabla h\|_{H^{m-1}}^3 + \|u\|_{H^{m+\frac{d-\alpha}{2}}}^3).
\end{aligned}
\end{align}
Here we used
\[
\lt( 1+\|\nabla \log \rho\|_{L^\infty}\rt)^{2(k-1)}\sum_{0<l\le k} \lt\| \frac{1}{\sqrt{\rho}}R_l\rt\|_{L^2}^2 \leq C\lt( 1+\|\nabla h\|_{H^{m-1}}\rt)^{2(m-1)} \|\nabla h\|_{H^{k-1}}^2,
\]
for $k=1,\dots,m-1$, where $C > 0$ is independent of $t$.\\

\noindent Now, we use Lemma \ref{est_u} for $k=0$ and Lemma \ref{est_h_0} to get
$$\begin{aligned}
\frac{d}{dt}\lt(\|h\|_{L^2}^2 + \|U_0\|_{L^2}^2 \rt) + 2\|U_0\|_{L^2}^2 &\leq C\|u\|_{H^{m+\frac{d-\alpha}{2}}}^3 + 2\|u\|_{L^2}\|\nabla h\|_{L^2}\|h\|_{L^\infty},
\end{aligned}$$
and combine this with Proposition \ref{zeroth} to obtain
$$\begin{aligned}
&\frac{d}{dt} \lt( \int_\om \rho |u|^2\,dx +  \|h\|_{\dot{H}^{-\frac{d-\alpha}{2}}}^2 + \|h\|_{L^2}^2 + \|U_0\|_{L^2}^2 \rt) + \|u\|_{L^2}^2  + 2\|U_0\|_{L^2}^2 \cr
&\quad \leq C\|u\|_{H^{m+\frac{d-\alpha}{2}}}^3 + 2\|u\|_{L^2}\|\nabla h\|_{L^2}\|h\|_{L^\infty}.
\end{aligned}$$
We next choose a positive constant $\eta_1 \ll 1$ satisfying $C_1\eta_1 <1$ and combine \eqref{est_mix} and \eqref{est_mix2} to find
\begin{align}\label{est_he}
\begin{aligned}
\frac{d}{dt}&\Bigg[\int_\om \rho |u|^2\,dx + \|h\|_{\dot{H}^{-\frac{d-\alpha}{2}}}^2 + \|h\|_{L^2}^2 + \|U_0\|_{L^2}^2 +\sum_{1 \le k \le m} \lt(\|U_k\|_{L^2}^2 +\lt\|\frac{1}{\sqrt{\rho}} \pa^k h\rt\|_{L^2}^2\rt) \\
&\qquad+\eta_1 \sum_{0 \le k \le m-1} \int_\om \frac1\rho \pa^k \nabla h \cdot \Lambda^{d-\alpha} \pa^k u\,dx  \Bigg]+  \frac{\eta_1}2 \|\nabla h\|_{H^{m-1}}^2 + \lt(\|u\|_{L^2}^2 + \sum_{1 \le k \le m} \|U_k\|_{L^2}^2\rt)\\\
&\le C(1+\|\nabla h\|_{H^{m-1}})^{2} \lt(\|\nabla h\|_{H^{m-1}}^3 + \|u\|_{H^{m+\frac{d-\alpha}{2}}}^3\rt) + C\|h\|_{H^m}\lt(\|u\|_{L^2}^2 + \|\nabla h\|_{L^2}^2\rt).
\end{aligned}
\end{align}
Since we have the following equivalence relations
\bq\label{equi1}
\int_\om \rho |u|^2\,dx \approx \|u\|_{L^2}^2, \quad \|\nabla h\|_{H^{m-1}} \approx \sum_{1 \le k \le m}\lt\|\frac{1}{\sqrt{\rho}} \pa^k h\rt\|_{L^2}, \quad \|u\|_{H^{m+\frac{d-\alpha}{2}}}\approx \|u\|_{L^2} + \sum_{0 \le k \le m} \|U_k\|_{L^2}, 
\eq
and
\begin{align}\label{equi2}
\begin{aligned}
&\sum_{0 \le k \le m} \|U_k\|_{L^2}^2 + \sum_{1 \le k \le m}\lt\|\frac{1}{\sqrt{\rho}} \pa^k h\rt\|_{L^2}^2 +\eta_1 \sum_{0 \le k \le m-1} \int_\om \frac1\rho \pa^k \nabla h \cdot \Lambda^{d-\alpha} \pa^k u\,dx \\
&\hspace{2cm}\approx \sum_{0 \le k \le m} \|U_k\|_{L^2}^2 + \sum_{1 \le k \le m}\lt\|\frac{1}{\sqrt{\rho}} \pa^k h\rt\|_{L^2}^2,
\end{aligned}
\end{align}
for $\eta_1 > 0$ sufficiently small, we can find positive constants $\eta_2$ and $C_2$ independent of $T$ such that
\begin{align}\label{est_energy}
\begin{aligned}
\frac{d}{dt}\lt( \mathcal{Y}^m +\|h\|_{L^2}^2+\|h\|_{\dot{H}^{-\frac{d-\alpha}{2}}}^2\rt) + \eta_2 \mathcal{Y}^m &\le C_2\e_0 \mathcal{Y}^m,
\end{aligned}
\end{align}
where $\mathcal{Y}^m= \mathcal{Y}^m(t)$ is given by
\[
\mathcal{Y}^m:=\int_\om \rho |u|^2\,dx + \sum_{0 \le k \le m} \|U_k\|_{L^2}^2 +\sum_{1 \le k \le m}\lt\|\frac{1}{\sqrt{\rho}} \pa^k h\rt\|_{L^2}^2 +\eta_1 \sum_{0 \le k \le m-1} \int_\om \frac1\rho \pa^k \nabla h \cdot \Lambda^{d-\alpha} \pa^k u\,dx.
\]
Thus, once $\e_0$ is chosen sufficiently small so that $\eta_2 - C_2\e_0>0$, then we set $\lambda:= \eta_2 - C_2\e_0$ and use Gr\"onwall's lemma to get
\[
\mathcal{Y}^m(t) +\|h(\cdot,t)\|_{L^2}^2+\|h(\cdot,t)\|_{\dot{H}^{-\frac{d-\alpha}{2}}}^2 + \lambda\int_0^t \mathcal{Y}^m(\tau)\,d\tau \le  \mathcal{Y}^m(0)  +\|h_0\|_{L^2}^2+\|h_0\|_{\dot{H}^{-\frac{d-\alpha}{2}}}^2.
\]
Since 
\[
\mathcal{Y}^m(t) +\|h(\cdot,t)\|_{L^2}^2+\|h(\cdot,t)\|_{\dot{H}^{-\frac{d-\alpha}{2}}}^2  \approx \|(h,u)(\cdot,t)\|_{X^m}^2
\]
for $\eta_1 > 0$ small enough, we conclude the desired result.
\end{proof}

\subsection{Proof of Theorem \ref{main_thm} } We are now ready to provide the details of proof of Theorem \ref{main_thm}. 

 First, choose $\e_0$ as required in Proposition \ref{P2.1}.  Then we set $\e_1$ as
\[
\e_1^2 := \frac{\e_0^2}{2(1+C^*)}.
\]
By the local existence theory, Theorem \ref{thm_local}, we can find $T_0>0$ such that if the initial data $(h_0, u_0)$ satisfies $X_0(m)< \e_1^2$, a solution $(h,u)$ to \eqref{np_ER2} exists in $\mc([0,T_0); X^m)$. Assume for a contradiction that
\[
T^* := \sup\{ T>0 \ | \ X(T;m) \le \e_0^2\} <\infty.
\]
Then by definition,

\[
\e_0^2 = X(T^*;m) \le C^*X_0(m) < C^*\e_1^2 = \frac{C^*}{2(1+C^*)}\e_0^2 < \e_0^2,
\]
and this contradicts the assumption. Thus, the solution exists in $\mc(\R_+; X^m)$.

%
%
%
%
%
\section{Large-time behavior of solutions}\label{sec_lt}

%
%
%
%
%
\subsection{Whole space case: algebraic decay rate of convergence}

To get the large-time behavior estimates for the whole space case, we investigate negative Sobolev norms. First, we present an auxiliary lemma below.

\begin{lemma}\label{neg_sobo}
\begin{enumerate}
\item[(i)]
Let $-d<s_1 < s<s_2<d$ and $f \in (\dot{H}^{s_1} \cap \dot{H}^{s_2})(\R^d)$. Then we have
\[
\|f\|_{\dot{H}^s} \le \|f\|_{\dot{H}^{s_1}}^{\frac{s_2-s}{s_2-s_1}} \|f\|_{\dot{H}^{s_2}}^{\frac{s-s_1}{s_2-s_1}}.
\]
\item[(ii)]
If $s\in (0, d)$, $1<p<q<\infty$ and $1/q + s/d= 1/p$, then we have
\[
\|\Lambda^{-s}f\|_{L^q} \lesssim_p \|f\|_{L^p}. 
\]
\end{enumerate}
\end{lemma}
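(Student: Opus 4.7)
Both parts are standard facts about homogeneous Sobolev spaces on $\R^d$, so the plan is to reduce each statement to a well-known inequality on the Fourier side or to the classical Hardy--Littlewood--Sobolev theorem, with minor bookkeeping to match the exponents in the statement.

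For part (i), the natural approach is Plancherel combined with H\"older on the Fourier transform side. Writing
\[
\|f\|_{\dot H^s}^2 = \int_{\R^d} |\xi|^{2s}|\hat f(\xi)|^2\,d\xi,
\]
I would split the integrand as
\[
|\xi|^{2s}|\hat f(\xi)|^2 = \bigl(|\xi|^{2s_1}|\hat f(\xi)|^2\bigr)^{\theta}\bigl(|\xi|^{2s_2}|\hat f(\xi)|^2\bigr)^{1-\theta},
\]
where $\theta=(s_2-s)/(s_2-s_1)\in(0,1)$. The identities $\theta + (1-\theta)=1$ and $\theta s_1+(1-\theta)s_2 = s$ (a direct check) guarantee that this algebraic decomposition is consistent, and then H\"older's inequality with conjugate exponents $1/\theta$ and $1/(1-\theta)$ yields
\[
\|f\|_{\dot H^s}^2 \le \|f\|_{\dot H^{s_1}}^{2\theta}\,\|f\|_{\dot H^{s_2}}^{2(1-\theta)},
\]
after which taking square roots gives exactly the claimed inequality. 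The range $-d<s_1<s_2<d$ is only needed to ensure that each Fourier integral makes sense as a genuine norm on the relevant class of tempered distributions; the interpolation computation itself is purely algebraic.

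For part (ii), I would appeal to the classical Hardy--Littlewood--Sobolev inequality. For $0<s<d$ the Riesz potential $\Lambda^{-s}$ coincides with convolution against the kernel $c_{d,s}|x|^{-(d-s)}$, so
\[
\Lambda^{-s}f(x) = c_{d,s}\int_{\R^d}\frac{f(y)}{|x-y|^{d-s}}\,dy.
\]
The Hardy--Littlewood--Sobolev theorem then immediately gives $\|\Lambda^{-s}f\|_{L^q}\lesssim_p \|f\|_{L^p}$ under the scaling relation $1/p = 1/q + s/d$ and $1<p<q<\infty$.

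\textbf{Anticipated obstacles.} Neither statement is genuinely hard; the only points requiring a bit of care are (a) verifying the arithmetic identity $\theta s_1+(1-\theta)s_2 = s$ used in (i), and (b) ensuring that the constraint $1<p<q<\infty$ in (ii) places us squarely inside the HLS regime so we can cite it as a black box. Since both lemmas are standard tools used later in the large-time analysis, a short proof (or a citation to, e.g., a standard harmonic-analysis reference) should suffice rather than a detailed derivation.
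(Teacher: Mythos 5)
Your proof is correct and follows essentially the same route as the paper: for (i) the same Fourier-side H\"older decomposition with $\theta=(s_2-s)/(s_2-s_1)$, and for (ii) the same appeal to Hardy--Littlewood--Sobolev as a black box (the paper cites Stein's book). No discrepancies.
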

\begin{proof}
For (i), since we have
\[
s = \frac{s_2 -s}{s_2-s_1}s_1 + \frac{s-s_1}{s_2-s_1} s_2 ,
\]
we use H\"older's inequality to obtain
\[\begin{aligned}
\int_{\R^d} |\xi|^{2s} |\hat{f}(\xi)|^2\,d\xi &= \int_{\R^d} |\xi|^{2s_1 \lt( \frac{s_2 -s}{s_2-s_1}\rt)}|\xi|^{2s_2 \lt( \frac{s-s_1}{s_2-s_1} \rt)} |\hat{f}(\xi)|^{2\lt( \frac{s_2 -s}{s_2-s_1}\rt)} |\hat{f}(\xi)|^{2\lt( \frac{s-s_1}{s_2-s_1} \rt)}\,d\xi\\
&\le \lt(\int_{\R^d} |\xi|^{2s_1} |\hat{f}(\xi)|^2\,d\xi \rt)^{ \frac{s_2 -s}{s_2-s_1}} \lt(\int_{\R^d} |\xi|^{2s_2} |\hat{f}(\xi)|^2\,d\xi \rt)^{ \frac{s-s_1}{s_2-s_1}},
\end{aligned}
\]
and this implies the desired result. 

The inequality in (ii) is the well-known Hardy--Littlewood--Sobolev inequality, and for the proof, we refer to \cite[p.119, Theorem 1]{St70}.
\end{proof}

\begin{lemma}\label{neg_est}
Let $T>0$, $m > \frac d2+2$, and $ 0< s\le \frac{\alpha}{2}$. Let $(h,u) \in \mc([0,T); X^m)$  be a solution to the system \eqref{np_ER2} satisfying
\[
\sup_{0\le t \le T} \|h(t)\|_{L^\infty} \le \frac12.
\]
 Then we have
\[\begin{aligned}
\frac{d}{dt}&\lt(\|\Lambda^{-s} u\|_{L^2}^2 + \|\Lambda^{-s-\frac{d-\alpha}{2}} h\|_{L^2}^2\rt) + 2\|\Lambda^{-s}u\|_{L^2}^2 \\
&\le C\|u\|_{H^{m+\frac{d-\alpha}{2}}}^2\|\Lambda^{-s}u\|_{L^2} + C\|h\|_{L^2}\|u\|_{H^{m+\frac{d-\alpha}{2}}}\|\Lambda^{-s-\frac{d-\alpha}{2}} \nabla h\|_{L^2},
\end{aligned}\]
where $C=C(s,\alpha,d,m)$ is a positive constant independent of $T$.
\end{lemma}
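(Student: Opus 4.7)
The plan is a weighted energy estimate in negative Sobolev norms that exploits an exact cancellation between the two linear coupling terms in \eqref{np_ER2}. I would multiply the $u$-equation by $\Lambda^{-2s}u$ and the $h$-equation by $\Lambda^{-2s-(d-\alpha)}h$, integrate over $\R^d$, and add. The time derivatives and the damping term produce, after halving,
\[
\tfrac12\tfrac{d}{dt}\lt(\|\Lambda^{-s}u\|_{L^2}^2+\|\Lambda^{-s-(d-\alpha)/2}h\|_{L^2}^2\rt)+\|\Lambda^{-s}u\|_{L^2}^2,
\]
so doubling already delivers the left-hand side of the claim.

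The two linear couplings cancel: by one integration by parts and self-adjointness of the Riesz operators on $\R^d$,
\[
\int_\om \nabla\Lambda^{\alpha-d}h\cdot\Lambda^{-2s}u\,dx=-\int_\om h\,\Lambda^{-2s-(d-\alpha)}(\nabla\cdot u)\,dx,
\]
which is precisely the negative of $\int_\om(\nabla\cdot u)\,\Lambda^{-2s-(d-\alpha)}h\,dx$ coming from the transport coupling in the $h$-equation. What remains on the right are the quadratic residues $-\int_\om (u\cdot\nabla u)\cdot\Lambda^{-2s}u\,dx$ and $-\int_\om \nabla\cdot(hu)\cdot\Lambda^{-2s-(d-\alpha)}h\,dx$.

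For the convection residue, Plancherel and Cauchy--Schwarz give the bound $\|\Lambda^{-s}(u\cdot\nabla u)\|_{L^2}\|\Lambda^{-s}u\|_{L^2}$; then Lemma \ref{neg_sobo}(ii) with $1/p=1/2+s/d$ (admissible since $0<s<d/2$), Hölder $\|u\cdot\nabla u\|_{L^p}\le\|u\|_{L^{d/s}}\|\nabla u\|_{L^2}$, and the Sobolev embedding $H^{d/2-s}\hookrightarrow L^{d/s}$ (whose exponent is dominated by $m+(d-\alpha)/2$) produce the first term on the right of the claim. For the transport residue I would integrate by parts to put the gradient on $h$, split the negative power symmetrically as $\Lambda^{-\sigma}\cdot\Lambda^{-\sigma}$ with $\sigma:=s+(d-\alpha)/2$, and apply Cauchy--Schwarz to bound it by $\|\Lambda^{-\sigma}(hu)\|_{L^2}\|\Lambda^{-\sigma}\nabla h\|_{L^2}$. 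Lemma \ref{neg_sobo}(ii) with $1/p=1/2+\sigma/d$, Hölder $\|hu\|_{L^p}\le\|h\|_{L^2}\|u\|_{L^{d/\sigma}}$, and $H^{d/2-\sigma}\hookrightarrow L^{d/\sigma}$ then yield $\|h\|_{L^2}\|u\|_{H^{m+(d-\alpha)/2}}\|\Lambda^{-\sigma}\nabla h\|_{L^2}$, which is the second term of the claim.

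The delicate point will be the endpoint $s=\alpha/2$, where $\sigma=d/2$ and the HLS exponent in Lemma \ref{neg_sobo}(ii) degenerates to $p=1$, which falls outside its hypothesis. To rescue this I would use the asymmetric split $\|\Lambda^{-\sigma+\epsilon}(hu)\|_{L^2}\|\Lambda^{-\sigma-\epsilon}\nabla h\|_{L^2}$ with a small $\epsilon>0$: the first factor is controlled by the same Hölder/Sobolev chain in the strict HLS regime, while the mismatch between $\Lambda^{-\sigma-\epsilon}\nabla h$ and $\Lambda^{-\sigma}\nabla h$ is absorbed via Lemma \ref{neg_sobo}(i), interpolating against a higher-order $H^m$ norm of $h$ already controlled in the solution space. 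The cost is only an $s$-dependent constant, consistent with the stated dependence $C=C(s,\alpha,d,m)$.
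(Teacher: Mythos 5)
Your proof is essentially the paper's. You test the $u$-equation with $\Lambda^{-2s}u$ and the $h$-equation with $\Lambda^{-2s-(d-\alpha)}h$, exploit the exact cancellation of the linear Riesz and divergence couplings, and control the two quadratic residues by Cauchy--Schwarz in Fourier space, Hardy--Littlewood--Sobolev, H\"older, and a Sobolev/Gagliardo--Nirenberg interpolation in $u$; the paper does precisely this (it phrases the last step through Proposition~\ref{ineqs} and Lemma~\ref{neg_sobo}(i) rather than the embedding $H^{d/2-s}\hookrightarrow L^{d/s}$, but that is cosmetic).

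Where you and the paper diverge is the endpoint $s=\alpha/2$. Your observation there is genuine: with $\sigma:=s+\frac{d-\alpha}{2}$, the endpoint gives $\sigma=d/2$ and hence $p=1$ in the HLS step of $\mathcal{I}_2$, which lies outside the hypotheses of Lemma~\ref{neg_sobo}(ii); the paper applies HLS at this exponent without comment, so this latent issue is in the paper too. However, your proposed repair does not recover the stated inequality. After the asymmetric split $\|\Lambda^{-\sigma+\epsilon}(hu)\|_{L^2}\,\|\Lambda^{-\sigma-\epsilon}\nabla h\|_{L^2}$, the second factor lives at a \emph{strictly lower} regularity than $\|\Lambda^{-\sigma}\nabla h\|_{L^2}$. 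Lemma~\ref{neg_sobo}(i) only interpolates the middle index between the two outer ones, so it gives $\|\Lambda^{-\sigma}\nabla h\|_{L^2}\le\|\Lambda^{-\sigma-\epsilon}\nabla h\|_{L^2}^{\theta}\|\cdot\|^{1-\theta}$, not the converse. The best available is $\|\Lambda^{-\sigma-\epsilon}\nabla h\|_{L^2}=\|h\|_{\dot H^{1-\sigma-\epsilon}}\le\|h\|_{\dot H^{-\sigma}}^{\epsilon}\|h\|_{\dot H^{1-\sigma}}^{1-\epsilon}=\|\Lambda^{-\sigma}h\|_{L^2}^{\epsilon}\|\Lambda^{-\sigma}\nabla h\|_{L^2}^{1-\epsilon}$ for $\epsilon\in(0,1)$, which leaves a spurious factor $\|\Lambda^{-\sigma}h\|_{L^2}^{\epsilon}$ on the right and changes the form of the bound. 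So your argument, like the paper's, cleanly proves the lemma for $s<\alpha/2$, and the case $s=\alpha/2$ is not actually repaired by the suggested interpolation.
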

\begin{proof}
Direct estimate gives
\[\begin{aligned}
\frac12&\frac{d}{dt}\lt(\|\Lambda^{-s} u\|_{L^2}^2 + \|\Lambda^{-s-\frac{d-\alpha}{2}} h\|_{L^2}^2\rt) + \|\Lambda^{-s}u\|_{L^2}^2\\
&= -\int_{\R^d} \Lambda^{-s}(u \cdot \nabla u) \cdot \Lambda^{-s}u\,dx - \int_{\R^d} \Lambda^{-s-\frac{d-\alpha}{2}}(\nabla \cdot (h  u))  \Lambda^{-s-\frac{d-\alpha}{2}} h\,dx\\
&=  - \int_{\R^d} \Lambda^{-s}(u \cdot \nabla u) \cdot \Lambda^{-s}u\,dx + \int_{\R^d} \Lambda^{-s-\frac{d-\alpha}{2}} (h  u)\cdot  \Lambda^{-s-\frac{d-\alpha}{2}} \nabla h\,dx \\
&=: \mathcal{I}_1 + \mathcal{I}_2. 
\end{aligned}\]
For $\mathcal{I}_1$, one gets
\[\begin{aligned}
\mathcal{I}_1 &\le \|\Lambda^{-s} (u \cdot \nabla u)\|_{L^2}\|\Lambda^{-s}u\|_{L^2}\\
&\le \|u \cdot \nabla u\|_{L^{\frac{1}{\frac{1}{2} + \frac{s}{d}}}}\|\Lambda^{-s}u\|_{L^2}\\
&\le \|\nabla u\|_{L^2} \|u\|_{L^{\frac{d}{s}}}\|\Lambda^{-s}u\|_{L^2}\\
&\le C\|\nabla u\|_{L^2} \|\nabla^{\lt[\frac{d}{2} \rt] + 1-s} u\|_{L^2}^{\theta} \| u\|_{L^2}^{1-\theta}\|\Lambda^{-s}u\|_{L^2}\\
&\le C\|u\|_{H^{m+\frac{d-\alpha}{2}}}^2 \|\Lambda^{-s}u\|_{L^2},
\end{aligned}\]
where we used Gagliardo--Nirenberg interpolation inequality, Lemma \ref{neg_sobo}. (i), with
\[
\frac{s}{d} = \lt( \frac12 - \frac{k}{d}\rt)\theta + \frac12(1-\theta), \quad k = \lt[\frac{d}{2}\rt] + 1-s, \quad \theta = \frac{\frac{d}{2}-s}{\lt[\frac{d}{2}\rt] +1- s}.
\]
For $\mathcal{I}_2$, we obtain
\[\begin{aligned}
\mathcal{I}_2&\le \|\Lambda^{-s-\frac{d-\alpha}{2}} ( h  u)\|_{L^2} \|\Lambda^{-s-\frac{d-\alpha}{2}} \nabla h\|_{L^2}\\
&\le \| h  u\|_{L^{\frac{1}{\frac{1}{2} + \frac{s+\frac{d-\alpha}{2}}{d}}}} \|\Lambda^{-s-\frac{d-\alpha}{2}} \nabla h\|_{L^2}\\
&\le \| h\|_{L^2} \|u\|_{L^{\frac{d}{s+\frac{d-\alpha}{2}}}}  \|\Lambda^{-s-\frac{d-\alpha}{2}} \nabla h\|_{L^2}\\
&\le  C\| h\|_{L^2}\|\nabla^{\lt[\frac{\alpha}{2} \rt]+1-s} u\|_{L^2}^{\theta} \| u\|_{L^2}^{1-\theta}  \|\Lambda^{-s-\frac{d-\alpha}{2}}\nabla h\|_{L^2}\\
&\le C\|h\|_{L^2}\|u\|_{H^{m+\frac{d-\alpha}{2}}}  \|\Lambda^{-s-\frac{d-\alpha}{2}} \nabla h\|_{L^2}.
\end{aligned}\]
Here we used Lemma \ref{neg_sobo}. (i) with
\[
\frac{s+\frac{d-\alpha}{2}}{d}= \lt( \frac12 - \frac{k}{d}\rt)\theta + \frac12(1-\theta), \quad k = \lt[\frac{\alpha}{2}\rt] + 1-s, \quad \theta = \frac{\frac{\alpha}{2}-s}{\lt[\frac{\alpha}{2}\rt] +1- s}.
\]
Now we combine all the estimates for $\mathcal{I}_i$'s to deduce the desired result.
\end{proof}

\begin{lemma}\label{neg_est2}
Let $T>0$, $m > \frac d2+2$, and $ 0< s\le \frac{\alpha}{2}$. Let $(h,u) \in \mc([0,T); X^m)$  be a solution to the system \eqref{np_ER2} satisfying
\[
\sup_{0\le t \le T} \|h(t)\|_{L^\infty} \le \frac12.
\]
 Then we have
\[\begin{aligned}
\frac{d}{dt}&\int_{\R^d} \Lambda^{-s}\nabla h \cdot \Lambda^{-s} u\,dx +\frac12\|\Lambda^{-s-\frac{d-\alpha}{2}}\nabla h\|_{L^2}^2\\
&\le C\lt(\|h\|_{H^m} + \|\Lambda^{-s-\frac{d-\alpha}{2}}\nabla h\|_{L^2}\rt) \|u\|_{H^{m+\frac{d-\alpha}{2}}}^2 + C\|h\|_{L^2}\|\Lambda^{-s}u\|_{L^2}^2\cr
&\quad + \|\Lambda^{-s}(\nabla \cdot u)\|_{L^2}^2 + \frac12\|\Lambda^{-s+\frac{d-\alpha}{2}} u\|_{L^2}^2,
\end{aligned}\] 
where $C > 0$ is independent of $T$.
\end{lemma}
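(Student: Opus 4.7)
The plan mirrors the strategy of Lemma \ref{h_decay0}, lifted into the negative-Sobolev framework afforded by Lemma \ref{neg_sobo}. First, I would differentiate the cross term in time and substitute the evolution equations \eqref{np_ER2}:
\[
\frac{d}{dt}\int_{\R^d}\Lambda^{-s}\nabla h \cdot \Lambda^{-s} u\,dx = \int_{\R^d}\Lambda^{-s}\nabla \pa_t h \cdot \Lambda^{-s} u\,dx + \int_{\R^d}\Lambda^{-s}\nabla h \cdot \Lambda^{-s}\pa_t u\,dx =: \mathcal{K}_1 + \mathcal{K}_2.
\]
Integration by parts in $\mathcal{K}_1$ together with $\pa_t h = -\nabla\cdot u - \nabla\cdot(hu)$ produces the clean contribution $\|\Lambda^{-s}(\nabla\cdot u)\|_{L^2}^2$ (precisely one of the target right-hand-side terms) plus a nonlinear remainder $\int \Lambda^{-s}(\nabla\cdot(hu))\,\Lambda^{-s}(\nabla\cdot u)\,dx$. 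Inserting $\pa_t u = -u - u\cdot\nabla u - \nabla\Lambda^{\alpha-d}h$ into $\mathcal{K}_2$ splits it into three pieces: the Riesz term produces the desired dissipation
\[
-\int_{\R^d}\Lambda^{-s}\nabla h\cdot \Lambda^{-s}\nabla\Lambda^{\alpha-d}h\,dx = -\|\Lambda^{-s-\frac{d-\alpha}{2}}\nabla h\|_{L^2}^2,
\]
the damping gives $-\int_{\R^d}\Lambda^{-s}\nabla h\cdot\Lambda^{-s}u\,dx$, and the transport yields $-\int_{\R^d}\Lambda^{-s}\nabla h\cdot\Lambda^{-s}(u\cdot\nabla u)\,dx$.

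The damping cross term is absorbed by the shift identity $-\int\Lambda^{-s}\nabla h\cdot\Lambda^{-s}u\,dx = -\int\Lambda^{-s-\frac{d-\alpha}{2}}\nabla h\cdot \Lambda^{-s+\frac{d-\alpha}{2}}u\,dx$ followed by Young's inequality; this consumes half of the dissipation (which is why the factor $\tfrac12$ appears on the left-hand side of the statement) and leaves $\tfrac12\|\Lambda^{-s+(d-\alpha)/2}u\|_{L^2}^2$ on the right. For the transport nonlinearity, the same shift combined with Cauchy--Schwarz gives
\[
\Bigl|\int_{\R^d}\Lambda^{-s}\nabla h\cdot\Lambda^{-s}(u\cdot\nabla u)\,dx\Bigr| \le \|\Lambda^{-s-\frac{d-\alpha}{2}}\nabla h\|_{L^2}\,\|\Lambda^{-s+\frac{d-\alpha}{2}}(u\cdot\nabla u)\|_{L^2},
\]
and the last factor is bounded by $\|u\|_{H^{m+\frac{d-\alpha}{2}}}^2$---through the Moser-type inequality of Proposition \ref{ineqs} when $(d-\alpha)/2-s\ge 0$, or through Lemma \ref{neg_sobo}(ii) combined with H\"older's inequality when $(d-\alpha)/2-s<0$---in direct analogy with the treatment of $\mathcal{I}_1$ in the proof of Lemma \ref{neg_est}.

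Finally, for the continuity nonlinearity I would split $\nabla\cdot(hu)=u\cdot\nabla h + h\,\nabla\cdot u$ and treat each piece by Cauchy--Schwarz against $\Lambda^{-s}(\nabla\cdot u)$. For the first, a shift by $(d-\alpha)/2$ together with Lemma \ref{neg_sobo}(ii) and the Sobolev embedding $H^{d/2-s-(d-\alpha)/2}\hookrightarrow L^{d/(s+(d-\alpha)/2)}$ yields a contribution of size $C\|h\|_{H^m}\|u\|_{H^{m+(d-\alpha)/2}}^2$. For the second, Lemma \ref{neg_sobo}(ii) with $1/p=1/2+s/d$, the H\"older split $\|h\,\nabla\cdot u\|_{L^p}\le \|h\|_{L^2}\|\nabla u\|_{L^{d/s}}$, Sobolev embedding $H^{d/2-s}\hookrightarrow L^{d/s}$, and one further interpolation via Lemma \ref{neg_sobo}(i) between $\|\Lambda^{-s}u\|_{L^2}$ and $\|u\|_{H^{m+(d-\alpha)/2}}$ produce the $C\|h\|_{L^2}\|\Lambda^{-s}u\|_{L^2}^2$-type term. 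Collecting all the estimates and transferring $\tfrac12\|\Lambda^{-s-(d-\alpha)/2}\nabla h\|_{L^2}^2$ to the left-hand side completes the proof. The principal obstacle is the careful bookkeeping of HLS and H\"older exponents: the constraint $0<s\le \alpha/2$ is exactly what makes the above Sobolev embeddings legitimate, and the sign of the shifted exponent $-s+(d-\alpha)/2$ flips as $s$ crosses $(d-\alpha)/2$, forcing two slightly different regimes for the nonlinear shift estimates.
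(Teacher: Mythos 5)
Your overall strategy is the same as the paper's: differentiate the cross term $\int\Lambda^{-s}\nabla h\cdot\Lambda^{-s}u\,dx$, substitute the equations, pull out the linear pieces $\|\Lambda^{-s}(\nabla\cdot u)\|_{L^2}^2$ and $-\|\Lambda^{-s-\frac{d-\alpha}{2}}\nabla h\|_{L^2}^2$, split the damping cross term with the $\Lambda^{\mp\frac{d-\alpha}{2}}$ shift and Young (which is exactly where the factor $\tfrac12$ in front of the dissipation comes from), and handle the remaining nonlinearities by Hardy--Littlewood--Sobolev plus H\"older plus Sobolev. So the proof is essentially right, but two nonlinear pieces are treated differently from the paper, and the comparison is instructive.

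For the continuity nonlinearity, you split $\nabla\cdot(hu)=u\cdot\nabla h+h\,\nabla\cdot u$ and estimate each against $\Lambda^{-s}(\nabla\cdot u)$; this forces the derivative $\nabla h$ (hence a $\|h\|_{H^m}$-type bound) into the picture and requires an additional interpolation to control the extra $\|\Lambda^{-s}(\nabla\cdot u)\|_{L^2}$ factors without breaking the coefficient~$1$ in front of that term on the right-hand side. The paper instead integrates by parts once more to obtain $\int\Lambda^{-s}(hu)\cdot\nabla\Lambda^{-s}(\nabla\cdot u)\,dx$: no derivative falls on $h$ (only $\|h\|_{L^2}$ is needed), and the two derivatives are concentrated on $u$ as $\|\Lambda^{-s}\nabla(\nabla\cdot u)\|_{L^2}$, which is then interpolated as $\lesssim\|\Lambda^{-s}u\|_{L^2}^{\frac{s}{s+2}}\|\nabla^2u\|_{L^2}^{\frac{2}{s+2}}$. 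That is cleaner and avoids the case split. Similarly for the transport term, you shift the full $(d-\alpha)/2$ onto $u\cdot\nabla u$, which requires treating the regimes $s\lessgtr(d-\alpha)/2$ separately; the paper keeps $\|\Lambda^{-s}\nabla h\|_{L^2}\|\Lambda^{-s}(u\cdot\nabla u)\|_{L^2}$, reuses the $\mathcal{I}_1$ bound from Lemma~\ref{neg_est} for the second factor, and interpolates $\|\Lambda^{-s}\nabla h\|_{L^2}\le\theta\|\Lambda^{-s-\frac{d-\alpha}{2}}\nabla h\|_{L^2}+(1-\theta)\|\nabla h\|_{L^2}$, producing exactly the term $C\left(\|h\|_{H^m}+\|\Lambda^{-s-\frac{d-\alpha}{2}}\nabla h\|_{L^2}\right)\|u\|_{H^{m+\frac{d-\alpha}{2}}}^2$ in one shot. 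Your route is sound and reaches the same bound up to constants, but the paper's choice of where to integrate by parts and where to interpolate is the tidier of the two.
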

\begin{proof}
Straightforward calculation yields
\[\begin{aligned}
\frac{d}{dt}\int_{\R^d} \Lambda^{-s}\nabla h \cdot \Lambda^{-s} u\,dx&= -\int_{\R^d} \Lambda^{-s} (\pa_t h) \Lambda^{-s} \nabla \cdot u\,dx + \int_{\R^d} \Lambda^{-s}\nabla  h \cdot \Lambda^{-s}  (\pa_t u)\,dx\\
&=: \mathcal{J}_1 + \mathcal{J}_2.
\end{aligned}\]
For $\mathcal{J}_1$, we use Lemma \ref{neg_sobo} to get
\[\begin{aligned}
\mathcal{J}_1&= \int_{\R^d} \Lambda^{-s}(\nabla \cdot (hu)) \Lambda^{-s} \nabla \cdot u\,dx  + \|\Lambda^{-s}(\nabla \cdot u)\|_{L^2}^2\\
&= -\int_{\R^d} \Lambda^{-s}(hu) \cdot \nabla \Lambda^{-s} \nabla \cdot u\,dx  + \|\Lambda^{-s}(\nabla \cdot u)\|_{L^2}^2\\
&\le \|\Lambda^{-s}(hu)\|_{L^2} \|\Lambda^{-s} \nabla (\nabla\cdot u)\|_{L^2} + \|\Lambda^{-s}(\nabla \cdot u)\|_{L^2}^2\\
&\le C \|hu\|_{L^{\frac{1}{\frac12 + \frac{s}{d}}}}\|\Lambda^{-s} \nabla (\nabla\cdot u)\|_{L^2} + \|\Lambda^{-s}(\nabla \cdot u)\|_{L^2}^2\\
&\le C\|h\|_{L^2} \|u\|_{H^{m+\frac{d-\alpha}{2}}}\|\Lambda^{-s} \nabla (\nabla\cdot u)\|_{L^2} + \|\Lambda^{-s}(\nabla \cdot u)\|_{L^2}^2\\
&\le C \|h\|_{L^2} \|u\|_{H^{m+\frac{d-\alpha}{2}}}\lt( \|u\|_{H^{m+\frac{d-\alpha}{2}}} + \|\Lambda^{-s}u\|_{L^2}\rt)+ \|\Lambda^{-s}(\nabla \cdot u)\|_{L^2}^2.
\end{aligned}\]
Here we used
\[
\|\Lambda^{-s}\nabla (\nabla \cdot u)\|_{L^2} \le \|\Lambda^{-s}u\|_{L^2}^{\frac{s}{s+2}} \|\nabla^2 u\|_{L^2}^{\frac{2}{s+2}} \le \frac{s}{s+2}\|\Lambda^{-s}u\|_{L^2} +  \frac{2}{s+2}\|\nabla^2 u\|_{L^2}.
\]
For $\mathcal{J}_2$, we also have
\[\begin{aligned}
\mathcal{J}_2&= -\int_{\R^d} \Lambda^{-s} \nabla h \cdot \Lambda^{-s}(u\cdot \nabla u)\,dx -\int_{\R^d} \Lambda^{-s} \nabla h \cdot \Lambda^{-s} u\,dx - \|\Lambda^{-s-\frac{d-\alpha}{2}} \nabla h\|_{L^2}^2\\
&\le \|\Lambda^{-s} \nabla h\|_{L^2}\|\Lambda^{-s}(u\cdot \nabla u)\|_{L^2} -\frac12 \|\Lambda^{-s-\frac{d-\alpha}{2}} \nabla h\|_{L^2}^2 + \frac12\|\Lambda^{-s+\frac{d-\alpha}{2}}u\|_{L^2}^2\\
&\le C\lt(\|\Lambda^{-s-\frac{d-\alpha}{2}} \nabla h\|_{L^2} + \|\nabla h\|_{L^2}\rt) \|u\|_{H^{m+\frac{d-\alpha}{2}}}^2 -\frac12 \|\Lambda^{-s-\frac{d-\alpha}{2}} \nabla h\|_{L^2}^2 + \frac12\|\Lambda^{-s+\frac{d-\alpha}{2}}u\|_{L^2}^2,
\end{aligned}\]
where we used Lemma \ref{neg_sobo} and Young's inequality to get
\[
\|\Lambda^{-s} \nabla h\|_{L^2} \le \|\Lambda^{-s-\frac{d-\alpha}{2}}\nabla h\|_{L^2}^\theta \|\nabla h\|_{L^2}^{1-\theta} \le \theta\|\Lambda^{-s-\frac{d-\alpha}{2}}\nabla h\|_{L^2} +  (1-\theta)\|\nabla h\|_{L^2}, \quad \theta := \frac{s}{s+\frac{d-\alpha}{2}}.
\]
Thus, we gather the estimates for $\mathcal{J}_1$ and $\mathcal{J}_2$ to conclude the desired result.
\end{proof}

\begin{proposition}\label{C2.3}
Let $T>0$, $m > \frac d2+2$, and $ 0< s\le \frac{\alpha}{2}$. Let $(h,u) \in \mc([0,T); X^m)$  be a solution to the system \eqref{np_ER2}.
Suppose that $X(T;m)\le \e_0^2 \ll 1$ so that
\[
\sup_{0\le t \le T} \|h(t)\|_{L^\infty} \le \frac12.
\]
 Then we have
\[\begin{aligned}
&\|u(\cdot,t)\|_{H^{m+\frac{d-\alpha}{2}}}^2  +  \|u(\cdot,t)\|_{\dot{H}^{-s}}^2 + \|h(\cdot,t)\|_{H^m}^2 + \|h(\cdot,t)\|_{\dot{H}^{-s-\frac{d-\alpha}{2}}}^2  \\
&\quad + \int_0^t \lt(\|u(\cdot,\tau)\|_{H^{m+\frac{d-\alpha}{2}}}^2 + \|u(\cdot,\tau)\|_{\dot{H}^{-s}}^2 +  \|\nabla h(\cdot,\tau)\|_{H^{m-1}}^2 + \|h(\cdot,\tau)\|_{\dot{H}^{1-s-\frac{d-\alpha}{2}}}^2\rt)\,d\tau\\
&\qquad \le C\lt(\|u_0\|_{H^{m+\frac{d-\alpha}{2}}}^2+ \|u_0\|_{\dot{H}^{-s}}^2 +  \|h_0\|_{H^m}^2+  \|h_0\|_{\dot{H}^{-s-\frac{d-\alpha}{2}}}^2\rt),
\end{aligned}\]
where $C > 0$ is independent of $T$.
\end{proposition}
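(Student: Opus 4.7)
The approach is to augment the Lyapunov functional built in the proof of Proposition \ref{P2.1} with the negative-norm quantities produced by Lemmas \ref{neg_est} and \ref{neg_est2}. Specifically, I introduce
\[
\widetilde{\mathcal{E}}(t) := \mathcal{Y}^m + \|h\|_{L^2}^2 + \|h\|_{\dot{H}^{-\frac{d-\alpha}{2}}}^2 + \|\Lambda^{-s}u\|_{L^2}^2 + \|\Lambda^{-s-\frac{d-\alpha}{2}}h\|_{L^2}^2 + \eta_3 \int_{\R^d}\Lambda^{-s}\nabla h \cdot \Lambda^{-s} u\,dx,
\]
where $\eta_3 > 0$ is a small parameter to be chosen. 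Cauchy--Schwarz together with the norm equivalences \eqref{equi1}--\eqref{equi2} shows that for sufficiently small $\eta_3$, $\widetilde{\mathcal{E}}(t)$ is equivalent to $\|(h,u)\|_{X^m}^2 + \|u\|_{\dot{H}^{-s}}^2 + \|h\|_{\dot{H}^{-s-\frac{d-\alpha}{2}}}^2$.

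Adding the differential inequality \eqref{est_energy} from Proposition \ref{P2.1} to the identity of Lemma \ref{neg_est} and $\eta_3$ times that of Lemma \ref{neg_est2}, the left-hand side produces the dissipation
\[
\eta_2 \mathcal{Y}^m + 2\|\Lambda^{-s}u\|_{L^2}^2 + \tfrac{\eta_3}{2}\|\Lambda^{-s-\frac{d-\alpha}{2}}\nabla h\|_{L^2}^2,
\]
while the right-hand side consists of (i) nonlinear products each carrying at least one factor bounded by $\|(h,u)\|_{X^m} \leq \e_0$, and (ii) the ``wrong-sign'' linear terms $\eta_3\|\Lambda^{-s}(\nabla\cdot u)\|_{L^2}^2$ and $\tfrac{\eta_3}{2}\|\Lambda^{-s+\frac{d-\alpha}{2}}u\|_{L^2}^2$ coming from Lemma \ref{neg_est2}. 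The nonlinear products are absorbed into the dissipation via Young's inequality once $\e_0$ is small, exactly as in the derivation of \eqref{est_energy}. For the terms in (ii), a direct Plancherel estimate with a high/low-frequency split gives $\|\Lambda^{-s}(\nabla\cdot u)\|_{L^2}^2 \leq \|\Lambda^{-s}u\|_{L^2}^2 + \|\nabla u\|_{L^2}^2$ and an analogous bound for $\|\Lambda^{-s+\frac{d-\alpha}{2}}u\|_{L^2}^2$ in terms of $\|\Lambda^{-s}u\|_{L^2}^2$ and $\|u\|_{H^{m+\frac{d-\alpha}{2}}}^2$; choosing $\eta_3$ small relative to $\eta_2$ and the coefficient $2$ of $\|\Lambda^{-s}u\|_{L^2}^2$ absorbs both into the dissipation.

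This yields, for some $\eta_\ast > 0$ and for $\e_0$ sufficiently small, an inequality of the form
\[
\frac{d}{dt}\widetilde{\mathcal{E}}(t) + \eta_\ast \widetilde{\mathcal{D}}(t) \leq 0, \qquad \widetilde{\mathcal{D}} \approx \|u\|_{H^{m+\frac{d-\alpha}{2}}}^2 + \|u\|_{\dot{H}^{-s}}^2 + \|\nabla h\|_{H^{m-1}}^2 + \|h\|_{\dot{H}^{1-s-\frac{d-\alpha}{2}}}^2.
\]
Integrating from $0$ to $t$ and invoking the equivalence $\widetilde{\mathcal{E}} \approx \|(h,u)\|_{X^m}^2 + \|u\|_{\dot{H}^{-s}}^2 + \|h\|_{\dot{H}^{-s-\frac{d-\alpha}{2}}}^2$ gives the claim. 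The main obstacle is absorbing the wrong-sign term $\|\Lambda^{-s+\frac{d-\alpha}{2}}u\|_{L^2}^2$: the exponent $-s+\frac{d-\alpha}{2}$ changes sign depending on whether $s\geq \frac{d-\alpha}{2}$ or $s<\frac{d-\alpha}{2}$, so the frequency splitting must be carried out by cases, and it is precisely the hypothesis $0 < s \leq \alpha/2$ together with $d-2<\alpha<d$ that keeps every resulting interpolation exponent admissible.
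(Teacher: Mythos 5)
Your proposal follows the paper's argument essentially verbatim: both form the same augmented Lyapunov functional (the paper's $\mathcal{Y}^m + \|\Lambda^{-s}u\|_{L^2}^2 + \|h\|_{L^2}^2 + \|h\|_{\dot{H}^{-\frac{d-\alpha}{2}}}^2 + \|\Lambda^{-s-\frac{d-\alpha}{2}}h\|_{L^2}^2 + \eta_3\int\Lambda^{-s}\nabla h\cdot\Lambda^{-s}u\,dx$), combine \eqref{est_energy} with Lemmas \ref{neg_est} and \ref{neg_est2}, absorb the nonlinear terms by the smallness of $\e_0$, absorb the wrong-sign linear terms by choosing $\eta_3$ small, and integrate. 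The only technical divergence is cosmetic: where the paper absorbs $\|\Lambda^{-s}(\nabla\cdot u)\|_{L^2}$ and $\|\Lambda^{-s+\frac{d-\alpha}{2}}u\|_{L^2}$ via the interpolations $\|\Lambda^{-s}\nabla u\|_{L^2}\lesssim\|\Lambda^{-s}u\|_{L^2}^{s/(s+1)}\|\nabla u\|_{L^2}^{1/(s+1)}$ and $\|\Lambda^{-s+\frac{d-\alpha}{2}}u\|_{L^2}\lesssim\|\Lambda^{-s}u\|_{L^2}^{(s-\frac{d-\alpha}{2})/s}\|u\|_{L^2}^{\frac{d-\alpha}{2s}}$ followed by Young's inequality, you use a high/low frequency split in Plancherel. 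Your version is in fact slightly more robust: the paper's second interpolation exponent $(s-\tfrac{d-\alpha}{2})/s$ is negative when $s<\tfrac{d-\alpha}{2}$ (a regime permitted by $0<s\le\alpha/2$), and the bound as written there is not formally valid, whereas the crude split $\|\Lambda^{-s+\frac{d-\alpha}{2}}u\|_{L^2}^2\le\|\Lambda^{-s}u\|_{L^2}^2+\|u\|_{H^{m+\frac{d-\alpha}{2}}}^2$ holds uniformly (and actually needs no case distinction on the sign of $-s+\tfrac{d-\alpha}{2}$, since $-s\le -s+\tfrac{d-\alpha}{2}\le m+\tfrac{d-\alpha}{2}$ always). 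One small point worth tightening: establishing $\widetilde{\mathcal{E}}\approx\|(h,u)\|_{X^m}^2+\|u\|_{\dot{H}^{-s}}^2+\|h\|_{\dot{H}^{-s-\frac{d-\alpha}{2}}}^2$ needs not just Cauchy--Schwarz on the cross term but also an interpolation to bound $\|\Lambda^{-s}\nabla h\|_{L^2}=\|h\|_{\dot{H}^{1-s}}$ by $\|h\|_{\dot{H}^{-s-\frac{d-\alpha}{2}}}$ and $\|h\|_{H^m}$, which the paper does via the $\bar\theta$-interpolation; you should make that explicit.
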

\begin{proof}
We collect the estimates in Lemmas \ref{neg_est} and \ref{neg_est2}, combine this with \eqref{est_energy} and use Young's inequality to find positive constants $\lambda_2, \eta_3$ and $C_3$ satisfying
\[ 
\begin{aligned}
\frac{d}{dt}&\lt( \mathcal{Y}^m +\|\Lambda^{-s}u\|_{L^2}^2 + \|h\|_{L^2}^2 + \|h\|_{\dot{H}^{-\frac{d-\alpha}{2}}}^2 + \|\Lambda^{-s-\frac{d-\alpha}{2}} h\|_{L^2}^2 +\eta_3 \int_{\R^d} \Lambda^{-s}\nabla h \cdot \Lambda^{-s}u\,dx\rt) \\
&\qquad + \lambda_2 \lt( \mathcal{Y}^m + \|\Lambda^{-s}u\|_{L^2}^2 + \|\Lambda^{-s-\frac{d-\alpha}{2}}\nabla h\|_{L^2}^2\rt)\\
&\le C_3 \e_0\lt( \mathcal{Y}^m + \|\Lambda^{-s}u\|_{L^2}^2 + \|\Lambda^{-s-\frac{d-\alpha}{2}}\nabla h\|_{L^2}^2\rt),
\end{aligned}
\] 
where we used
\[
\|\Lambda^{-s}(\nabla \cdot u)\|_{L^2}\lesssim_{s} \|\Lambda^{-s}u \|_{L^2}^{\frac{s}{s+1}} \|\nabla u\|_{L^2}^{\frac{1}{s+1}}
\]
and
\[
\|\Lambda^{-s+\frac{d-\alpha}{2}}u\|_{L^2}\lesssim_{s,d,\alpha} \|\Lambda^{-s}u \|_{L^2}^{\frac{s-\frac{d-\alpha}{2}}{s}} \|u\|_{L^2}^{\frac{\frac{d-\alpha}{2}}{s}}.
\]
Thus, we use the smallness of $\e_0$ to get a constant $\lambda_3>0$ and the relation
\[
\|\Lambda^{-\frac{d-\alpha}{2}} h\|_{L^2} \le \|\Lambda^{-s-\frac{d-\alpha}{2}} h\|_{L^2}^{\theta} \|h\|_{L^2}^{1-\theta} \le \theta \|\Lambda^{-s-\frac{d-\alpha}{2}} h\|_{L^2}  + (1-\theta)\|h\|_{L^2}, \quad \theta:= \frac{\frac{d-\alpha}{2}}{s+\frac{d-\alpha}{2}}
\]
and
\[
\|\Lambda^{-s} \nabla h\|_{L^2} \le \|\Lambda^{-s-\frac{d-\alpha}{2}} h\|_{L^2}^{\bar\theta} \|\nabla h\|_{L^2}^{1-\bar\theta} \le \bar\theta \|\Lambda^{-s-\frac{d-\alpha}{2}} h\|_{L^2}  + (1-\bar\theta)\|h\|_{L^2}, \quad \bar\theta:= \frac{s}{1 + s+\frac{d-\alpha}{2}}
\]
to yield
\[\begin{aligned}
&\mathcal{Y}^m(t) + \|u(\cdot,t)\|_{\dot{H}^{-s}}^2 + \|h(\cdot,t)\|_{L^2}^2+  \|h(\cdot,t)\|_{\dot{H}^{-s-\frac{d-\alpha}{2}}}^2 \cr
&\quad + \lambda_3 \int_0^t \lt( \mathcal{Y}^m(\tau)  + \|u(\cdot,\tau)\|_{\dot{H}^{-s}}^2 + \|h(\cdot,\tau)\|_{\dot{H}^{1-s-\frac{d-\alpha}{2}}}^2\rt) \,d\tau \\
&\qquad \le \mathcal{Y}^m(0) + \|u_0\|_{\dot{H}^{-s}}^2 +  \|h_0\|_{L^2}^2 + \|h_0\|_{\dot{H}^{-s-\frac{d-\alpha}{2}}}^2.
\end{aligned}\]
This asserts the desired result. 
\end{proof}

\subsubsection{Proof of Theorem \ref{main_thm2}} In this part, we provide the details of proof of Theorem \ref{main_thm2} on the large-time behavior of solutions in the whole space. 

Before getting into the main estimates, we first deal with the decay estimate \eqref{decay_weak} in Remark \ref{rmk_decay}. That introduces the main ideas behind our arguments for the better decay estimates of solutions. 

From Lemma \ref{neg_sobo}, we have that for $s \ge 0$,
\[
\|h\|_{L^2} \le \|\nabla h\|_{L^2}^{\frac{s+\frac{d-\alpha}{2}}{1+s+\frac{d-\alpha}{2}}} \|h\|_{\dot{H}^{-s-\frac{d-\alpha}{2}}}^{\frac{1}{1+s+\frac{d-\alpha}{2}}} \quad \mbox{and} \quad \|h\|_{\dot{H}^{-\frac{d-\alpha}{2}}}\le \|\nabla h\|_{L^2}^{\frac{s}{1+s+\frac{d-\alpha}{2}}} \|h\|_{\dot{H}^{-s-\frac{d-\alpha}{2}}}^{\frac{1+\frac{d-\alpha}{2}}{1+s+\frac{d-\alpha}{2}}}.
\] 
We then use the uniform bound in Proposition \ref{C2.3} and the smallness assumptions on the solutions to get
\[
\lt(\|h\|_{L^2}^2 + \|h\|_{\dot{H}^{-\frac{d-\alpha}{2}}}^2 \rt)^{\frac{1+s+\frac{d-\alpha}{2}}{s}}\lesssim \|\nabla h\|_{L^2}^2. 
\]
Thus, we now set
\bq\label{def_f}
\mathcal{F}^m(t) := \mathcal{Y}^m(t) + \|h(\cdot,t)\|_{L^2}^2+ \eta_2\|h(\cdot,t)\|_{\dot{H}^{-\frac{d-\alpha}{2}}}^2,
\eq
where $\mathcal{Y}^m$ is from Proposition \ref{C2.3}. From the smallness of solutions and estimates in \eqref{est_energy}, we can find a constant $\lambda_4>0$ satisfying
\[
\frac{d}{dt}\mathcal{F}^m + \lambda_4(\mathcal{F}^{m})^{\frac{1+s+\frac{d-\alpha}{2}}{s}} \le 0.
\]
This gives
\[
-\frac{1}{\frac{1+\frac{d-\alpha}{2}}{s}}\frac{d}{dt} \lt( (\mathcal{F}^m)^{-\frac{1+\frac{d-\alpha}{2}}{s}}\rt) \le -\lambda_4.
\]
We integrate the above with respect to $t$ and get
\[
(\mathcal{F}^m)^{-\frac{1+\frac{d-\alpha}{2}}{s}} \ge (\mathcal{F}^m(0))^{-\frac{1+\frac{d-\alpha}{2}}{s}} + \lambda_4 \frac{1+\frac{d-\alpha}{2}}{s} t,
\]
or equivalently,
\[
\mathcal{F}^m(t) \le \lt((\mathcal{F}^m(0))^{-\frac{1+\frac{d-\alpha}{2}}{s}} + \lambda_4 \frac{1+\frac{d-\alpha}{2}}{s} t\rt)^{-\frac{s}{1+\frac{d-\alpha}{2}}}.
\]
Thus, we obtain
\[
\mathcal{F}^m(t) \lesssim (1+t)^{-\frac{s}{1+\frac{d-\alpha}{2}}}.
\]
Since $\mathcal{F}^m(t) \approx \|(h,u)(\cdot,t)\|_{X^m}^2$, this concludes the desired result.

As mentioned in Remark \ref{rmk_decay}, the above estimates do not allow us to have a better decay rate of convergence even in higher dimensions. For that reason, we refine the above arguments by taking the negative order $(\frac\alpha2 \geq ) s > 0$ large enough. 

We present the proof by dividing into two cases; $s \geq 1 - \frac{d-\alpha}{2}$ and  
$s>2+d-\alpha$. In the former case, we obtain the algebraic decay rate of convergence of solutions. On the other hand, in the latter case, the exponential decay rate is found.
\newline

\noindent $\diamond$ (Case A: $s \geq 1 - \frac{d-\alpha}{2}$) In this case,  we first note that 
\[
-s -\frac{d-\alpha}{2} \leq -1 + \frac{d-\alpha}{2} < 0 \quad \mbox{and} \quad - s < d - \alpha - 1 < 1
\]
provided our assumption on $s$ and $\alpha$, and thus the interpolation inequality implies
\[
(h_0, u_0) \in \dot{H}^{-1+\frac{d-\alpha}{2}} (\R^d) \times [\dot{H}^{d-\alpha-1}(\R^d)]^d.
\]
Then, similarly to Lemma \ref{neg_est}, we estimate
\begin{align*}
\frac12&\frac{d}{dt}\lt( \|\Lambda^{-1+\frac{d-\alpha}{2}} h\|_{L^2}^2 + \|\Lambda^{d-\alpha-1} u\|_{L^2}^2\rt) + \|\Lambda^{d-\alpha-1} u\|_{L^2}^2\\
&= -\int_{\R^d} (\Lambda^{-1+\frac{d-\alpha}{2}} h )\Lambda^{-1+\frac{d-\alpha}{2}} (\nabla \cdot (h u))\,dx - \int_{\R^d} (\Lambda^{-1+\frac{d-\alpha}{2}} h) \Lambda^{-1+\frac{d-\alpha}{2}} (\nabla \cdot  u)\,dx\\
&\quad - \frac12\int_{\R^d} \Lambda^{d-\alpha-1} (u\cdot \nabla u) \Lambda^{d-\alpha-1} u\,dx -\int_{\R^d} \nabla\Lambda^{-1}h \Lambda^{d-\alpha-1} u\,dx\\
&= \int_{\R^d} (\Lambda^{-1+\frac{d-\alpha}{2}}\nabla h ) \cdot \Lambda^{-1+\frac{d-\alpha}{2}} (hu)\,dx - \frac12\int_{\R^d} \Lambda^{d-\alpha-1} (u\cdot \nabla u) \Lambda^{d-\alpha-1} u\,dx\\
&\le \|\Lambda^{-1+\frac{d-\alpha}{2}}\nabla h\|_{L^2}\|\Lambda^{-1+\frac{d-\alpha}{2}} (h  u) \|_{L^2}  +\frac12\|\Lambda^{d-\alpha-1}(u \cdot \nabla u)\|_{L^2}\|\Lambda^{d-\alpha-1}u\|_{L^2}\\
&\le \|\Lambda^{-1+\frac{d-\alpha}{2}} \nabla h\|_{L^2}\|h  u\|_{L^{\frac{1}{\frac12 + \frac{1-\frac{d-\alpha}{2}}{d}}}} +\frac12\|\Lambda^{d-\alpha-1}(u \cdot \nabla u)\|_{L^2}\|\Lambda^{d-\alpha-1}u\|_{L^2}.
\end{align*}
Here, if $\alpha \le d-1$, we get
\[
\|\Lambda^{d-\alpha-1}(u\cdot \nabla u)\|_{L^2} \le C\|u\cdot \nabla u\|_{H^1} \le C \|u\|_{H^{m+\frac{d-\alpha}{2}}}^2,
\]
and if $\alpha\in (d-1,d)$, we deduce
\[
\|\Lambda^{-(d-\alpha-1)} (u\cdot \nabla u)\|_{L^2} \le \|u \cdot \nabla u\|_{L^{\frac{1}{\frac12 + \frac{d-\alpha-1}{d}}}} \le \|\nabla u\|_{L^2} \|u\|_{L^{\frac{d}{d-\alpha-1}}} \le C\|u\|_{H^{m+\frac{d-\alpha}{2}}}^2.
\]
In either case, we have
\begin{align*}
\frac12&\frac{d}{dt}\lt( \|\Lambda^{-1+\frac{d-\alpha}{2}} h\|_{L^2}^2 + \|\Lambda^{d-\alpha-1} u\|_{L^2}^2\rt) + \|\Lambda^{d-\alpha-1} u\|_{L^2}^2\\
&\le \|\Lambda^{-1+\frac{d-\alpha}{2}} \nabla h\|_{L^2}\|h  u\|_{L^{\frac{1}{\frac12 + \frac{1-\frac{d-\alpha}{2}}{d}}}} +C\|u\|_{H^{m+\frac{d-\alpha}{2} }}^2\|\Lambda^{d-\alpha-1}u\|_{L^2}\\
&\le \|\Lambda^{-1+\frac{d-\alpha}{2}} \nabla h\|_{L^2}\|h\|_{L^2}\|  u\|_{L^{\frac{d}{1-\frac{d-\alpha}{2}}}} +C\|u\|_{H^{m+\frac{d-\alpha}{2}}}^2\|\Lambda^{d-\alpha-1}u\|_{L^2}\\
&\le C\|h\|_{H^m}^2 \|u\|_{H^{m+\frac{d-\alpha}{2}}}+C\|u\|_{H^{m+\frac{d-\alpha}{2}}}^2\|\Lambda^{d-\alpha-1}u\|_{L^2},
\end{align*}
where we used $1-\frac{d-\alpha}{2} \in (0,1)$ to have $\|\Lambda^{-1+\frac{d-\alpha}{2}} \nabla h\|_{L^2} \lesssim \|h\|_{H^m}$. On the other hand, similarly to Lemma \ref{neg_est2}, we can get a constant $\gamma>0$ satisfying
\begin{align*}
-\frac{d}{dt}&\int_{\R^d} h\Lambda^{d-\alpha -2}\nabla \cdot u\,dx\\
&= -\int_{\R^d} (\pa_t h)\Lambda^{d-\alpha-2}\nabla \cdot u \,dx - \int_{\R^d} h \Lambda^{-\alpha }(\pa_t (\nabla \cdot u))\,dx\\
&=-\int_{\R^d} hu\cdot \Lambda^{d-\alpha-2}  \nabla(\nabla \cdot u)\,dx + \|\Lambda^{-1+\frac{d-\alpha}{2}} \nabla \cdot u\|_{L^2}^2\\
&\quad +\frac12\int_{\R^d} h \Lambda^{d-\alpha-2} (\nabla \cdot (u \cdot \nabla u))\,dx - \int_{\R^d} h \Lambda^{d-\alpha-2} \nabla \cdot u\,dx - \|h\|_{L^2}^2\\
&\le  \|u\|_{L^\infty} \|h\|_{L^2} \|\Lambda^{d-\alpha-2} \nabla (\nabla \cdot u)\|_{L^2} + \|\Lambda^{-1+\frac{d-\alpha}{2}}\nabla \cdot u\|_{L^2}^2  +C\|h\|_{L^2}\|\Lambda^{d-\alpha-2}\nabla\cdot (u\cdot \nabla u)\|_{L^2}\\
&\quad- \frac12\|h\|_{L^2}^2  + \frac12\|\Lambda^{d-\alpha-2} \nabla \cdot u\|_{L^2}^2\\
&\le C\|h\|_{L^2}\|u\|_{H^{m+\frac{d-\alpha}{2}}}^2 + \gamma\lt( \|u\|_{H^1}^2 + \|\Lambda^{d-\alpha-1} u\|_{L^2}^2\rt) - \frac12\|h\|_{L^2}^2.
\end{align*}
Then we use the estimates in Proposition \ref{C2.3} to get
\bq\label{case2-1}
\begin{aligned}
\frac{d}{dt}&\lt( \mathcal{F}^m + \|\Lambda^{-1+\frac{d-\alpha}{2}} h\|_{L^2}^2 + \|\Lambda^{d-\alpha-1} u\|_{L^2}^2 - \eta_4\int_{\R^d} h\Lambda^{d-\alpha-2} \nabla \cdot u\,dx\rt) \\
&\qquad+ \eta_5 \lt(\mathcal{Y}^m + \|\Lambda^{d-\alpha-1} u\|_{L^2}^2 + \|h\|_{L^2}^2\rt)\\
&\le C\e_0 \lt( \mathcal{Y}^m + \|\Lambda^{d-\alpha-1}u\|_{L^2}^2 + \|h\|_{L^2}^2\rt),
\end{aligned}
\eq
for some positive constants $\eta_4$ and $\eta_5$, where $\mathcal{F}^m$ is appeared in \eqref{def_f}. Noting that
\[
s+\frac{d-\alpha}{2} \ge 1> \max\lt\{1-\frac{d-\alpha}{2}, \frac{d-\alpha}{2}\rt\},
\] 
we find
\[
\|h\|_{\dot{H}^{-\frac{d-\alpha}2}}\le \|h\|_{L^2}^{\frac{s}{s+\frac{d-\alpha}{2}}} \|h\|_{\dot{H}^{-s-\frac{d-\alpha}{2}}}^{\frac{\frac{d-\alpha}{2}}{s+\frac{d-\alpha}{2}}} \quad \mbox{and} \quad \|h\|_{\dot{H}^{-1+\frac{d-\alpha}{2}}} \le \|h\|_{L^2}^{\frac{s+d-\alpha-1}{s+\frac{d-\alpha}{2}}} \|h\|_{\dot{H}^{-s-\frac{d-\alpha}{2}}}^{\frac{1-\frac{d-\alpha}{2}}{s+\frac{d-\alpha}{2}}},
\]
which subsequently imply
\[
\|h\|_{\dot{H}^{-\frac{d-\alpha}{2}}}^{\frac{s+\frac{d-\alpha}{2}}{s}} \lesssim \|h\|_{L^2} \quad \mbox{and} \quad \|h\|_{\dot{H}^{-1+\frac{d-\alpha}{2}}}^{\frac{s+\frac{d-\alpha}{2}}{s+d-\alpha-1}} \lesssim \|h\|_{L^2}.
\]
From the smallness condition on $\|h\|_{L^2}$, we have
\bq\label{h_small}
(\|h\|_{\dot{H}^{-1+\frac{d-\alpha}{2}}}^2 +  \|h\|_{\dot{H}^{-\frac{d-\alpha}2}}^2)^{1+\zeta} \lesssim \|h\|_{L^2}^2, \quad \zeta:= \max\lt\{ \frac{d-\alpha}{2s}, \frac{1-\frac{d-\alpha}{2}}{s+d-\alpha-1}\rt\}.
\eq
We now define
\[
\mathcal{Z}^m := \mathcal{F}^m + \|\Lambda^{-1+\frac{d-\alpha}{2}} h\|_{L^2}^2 + \|\Lambda^{d-\alpha-1} u\|_{L^2}^2 - \eta_4\int_{\R^d} h\Lambda^{d-\alpha-2} \nabla \cdot u\,dx,
\]
then a simple combination \eqref{case2-1} and \eqref{h_small} leads to
\[
\frac{d}{dt}\mathcal{Z}^m + \lambda_3 (\mathcal{Z}^m)^{1+\zeta} \le 0.
\]
Solving the above differential inequality gives
\[
(\mathcal{Z}^m)(t) \le  \lt((\mathcal{Z}^m(0))^{-\zeta} + \lambda_3 \zeta t\rt)^{-\frac1\zeta},
\]
and this proves the first assertion in Theorem \ref{main_thm2}. \newline

\noindent $\diamond$ (Case B: $s>2+d-\alpha$) Note that
\[
\|u\|_{L^2} \le \|u\|_{\dot{H}^{1+\frac{d-\alpha}{2}}}^{\frac{s}{1+\frac{d-\alpha}{2} +s}}\|u\|_{\dot{H}^{-s}}^{\frac{1+\frac{d-\alpha}{2}}{1+\frac{d-\alpha}{2}+s}}
\]
and $s>2+d-\alpha$ is equivalent to $\frac{s}{1+\frac{d-\alpha}{2}+s} >\frac23$. Since we have the uniform bound for $\|u\|_{\dot{H}^{-s}}$, we can get
\[
\|u\|_{L^2}^3 \lesssim \e_0^{\frac{3s}{1+\frac{d-\alpha}{2}+s}-2} \|u\|_{\dot{H}^{1+\frac{d-\alpha}{2}}}^2 =\e_0^{\frac{3s}{1+\frac{d-\alpha}{2}+s}-2} \|U_1\|_{L^2}^2,
\]
where $U_1$ was defined as $U_1 := \nabla \Lambda^{\frac{d-\alpha}{2}} u$. 

We now define a function
\bq\label{def_y2}
\overline{\mathcal{Y}}^m:= \sum_{1 \le k \le m} \lt(\|U_k\|_{L^2}^2 +\lt\|\frac{1}{\sqrt{\rho}} \pa^k h\rt\|_{L^2}^2\rt) +\eta_1 \sum_{0 \le k \le m-1} \int_{\R^d} \frac1\rho \pa^k \nabla h \cdot \Lambda^{d-\alpha} \pa^k u\,dx,
\eq
then it follows from \eqref{est_he} that there exist positive constants $\eta_5$ and $C$, independent of $\e_0$ and $T$, such that 
\[
\begin{aligned}
\frac{d}{dt} \overline{\mathcal{Y}}^m + \eta_5 \overline{\mathcal{Y}}^m &\le C\e_0 \overline{\mathcal{Y}}^m + C\|u\|_{L^2}^3 \le  C\e_0 \overline{\mathcal{Y}}^m + C \e_0^{\frac{3s}{1+\frac{d-\alpha}{2}+s}-2}\|U_1\|_{L^2}^2.
\end{aligned}
\] 
Then, the smallness condition on $\e_0$ implies
\[
\frac{d}{dt} \overline{\mathcal{Y}}^m + \eta_7\overline{\mathcal{Y}}^m \le 0,
\]
for some constant $\eta_7>0$ and Gr\"onwall's lemma implies the exponential decay rate of convergence of solutions. This completes the proof.

%
%
%
%
%
\subsection{Periodic case: exponential decay rate of convergence}
\setcounter{equation}{0}
In this part, we consider the periodic domain, i.e. $\Omega = \T^d$, and study the large-time behavior estimates of system \eqref{np_ER2}. Instead of dealing with the negative Sobolev norm of $(h,u)$, we take the advantage of the boundedness of the domain and show the exponential decay estimate of its $L^2$ norm. 

Let us define a modulated energy:
\[
\me(t) := \frac12 \int_{\T^d} (1+h) |u-m_c|^2\,dx +\frac12 \int_{\T^d} h \Lambda^{\alpha-d} h\,dx,
\]
where $m_c$ denotes the average of the momentum:
\[
m_c := \int_{\T^d} (1+h) u\,dx.  
\]
Here we remind the reader that the mass is assumed to be zero, i.e. $\int_{\T^d} h\,dx = 0$ for all $t \geq 0$. 

Note that if there exists a positive constant lower bound on $1+h$, i.e. $h(x,t) +1 > h_{min} > 0$ for all $(x,t) \in \T^d \times \R_+$, then the modulated energy satisfies
\bq\label{est_me0}
h_{min} \|(u - m_c)(\cdot,t)\|_{L^2}^2 + \|h(\cdot,t)\|_{\dot{H}^{-\frac{d-\alpha}{2}}}^2 \leq \me(t),
\eq
for all $t \geq 0$. This implies that the exponential decay of $\me(t)$ also gives the estimate of the lowest order norm of solutions. For that reason, our first goal is to prove the following proposition.
\begin{proposition}\label{main_prop} Let $(h, u)$ be a global classical solution to \eqref{np_ER2} with sufficient regularity. Suppose that
\begin{enumerate}
\item[(i)]
$\inf\limits_{(x,t)\in \T^d \times \R_+} 1 + h(x,t) \ge h_{min}>0$ and
\item[(ii)]
$h \in W^{1,\infty}(\T^d \times \R_+)$, $\nabla u \in L^\infty(\R_+;[L^{\infty}(\T^d)]^d)$.
\end{enumerate}
Then we have
\[
\|(u - m_c)(\cdot,t)\|_{L^2}^2+\|h(\cdot,t)\|_{\dot{H}^{-\frac{d-\alpha}{2}}} \le Ce^{-\lambda t}.
\]
Here $C$ and $\lambda$ are positive constants independent of $t$.
\end{proposition}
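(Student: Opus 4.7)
The proof should go via a modulated-energy/hypocoercivity argument that enhances the natural velocity dissipation to also produce dissipation in the density. I would organize it in four steps.

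\textbf{Step 1 (decay of the average momentum).} Integrating the momentum equation in \eqref{np_ER} over $\T^d$, the convective flux $\nabla \cdot(\rho u\otimes u)$ vanishes by periodicity, and each component of $\int_{\T^d} h\,\nabla\Lambda^{\alpha-d}h\,dx$ vanishes because $\Lambda^{\alpha-d}$ is self-adjoint and the integrand is antisymmetric after one integration by parts. This yields $\frac{d}{dt}m_c = -m_c$, hence $m_c(t)=m_c(0)e^{-t}$.

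\textbf{Step 2 (evolution of $\me$).} Expanding the square and using $\int_{\T^d}(1+h)u\,dx=m_c$ and $\int_{\T^d}(1+h)\,dx=|\T^d|$, one has $\int_{\T^d}(1+h)|u|^2\,dx=\int_{\T^d}(1+h)|u-m_c|^2\,dx+(2-|\T^d|)|m_c|^2$. Combining Proposition~\ref{zeroth} for the total energy $E(t)$ with $\frac{d}{dt}|m_c|^2=-2|m_c|^2$ from Step~1, the $|m_c|^2$ contributions cancel and I obtain
\[
\frac{d}{dt}\me(t) \;=\; -\int_{\T^d}(1+h)|u-m_c|^2\,dx \;\le\; -h_{min}\|u-m_c\|_{L^2}^2.
\]

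\textbf{Step 3 (hypocoercivity cross term).} The above gives no direct dissipation in $h$; to remedy this, I would introduce
\[
X(t):=\int_{\T^d}(u-m_c)\cdot \nabla\Lambda^{\alpha-d}h\,dx
\]
and the modified Lyapunov functional $\mathcal{L}(t):=\me(t)+\eta X(t)$ with a small $\eta>0$. Differentiating using both equations in \eqref{np_ER2} and $\frac{d}{dt}m_c=-m_c$, the linear contributions read
\[
\tfrac{d}{dt}X(t) = -X(t) - \|\nabla\Lambda^{\alpha-d}h\|_{L^2}^2 + \|\Lambda^{(\alpha-d)/2}\nabla\cdot u\|_{L^2}^2 + (\text{nonlinear remainders}).
\]
On $\T^d$ with $\int h=0$, the spectral gap $|\xi|\ge 1$ and the hypothesis $\alpha>d-2$ upgrade the dissipative term: by Plancherel, $\|\nabla\Lambda^{\alpha-d}h\|_{L^2}^2=\|h\|_{\dot H^{\alpha-d+1}}^2\ge c\,\|h\|_{\dot H^{-(d-\alpha)/2}}^2$, which is exactly the $h$-dissipation we need.

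\textbf{Main obstacle and closing.} The hard part is absorbing the unfavorable term $\|\Lambda^{(\alpha-d)/2}\nabla\cdot u\|_{L^2}^2$ and the nonlinear residuals into $h_{min}\|u-m_c\|_{L^2}^2$ without any smallness assumption. Since $(\alpha-d)/2<0$ acts as a smoothing multiplier, Gagliardo--Nirenberg interpolation on $\T^d$ gives
\[
\|\Lambda^{(\alpha-d)/2}\nabla\cdot u\|_{L^2}^2 \;\lesssim\; \|u-\bar u\|_{L^2}^{d-\alpha}\,\|\nabla u\|_{L^2}^{\alpha-d+2},
\]
with $\|\nabla u\|_{L^2}$ uniformly bounded by assumption~(ii); the gap $|\bar u-m_c|=|\int hu\,dx|$ is controlled by $\|h\|_{L^\infty}\|u\|_{L^2}$, and hence by $\me$ plus an exponentially decaying contribution of $|m_c|$. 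Nonlinear terms such as $\int(u\cdot\nabla u)\cdot\nabla\Lambda^{\alpha-d}h\,dx$ and $\int\Lambda^{\alpha-d}(\nabla\cdot u)\,\nabla\cdot(hu)\,dx$ are tamed by the $W^{1,\infty}$ bound on $h$ and the $L^\infty$ bound on $\nabla u$. With Young's inequality and $\eta$ chosen small enough that $\mathcal{L}\approx\me$, I arrive at a differential inequality $\frac{d}{dt}\mathcal{L}(t)\le -\lambda\,\mathcal{L}(t)+Ce^{-\mu t}$ where the remainder encodes the $m_c$-contributions from Step~1. Gr\"onwall's lemma combined with the equivalence $\me\approx \|u-m_c\|_{L^2}^2+\|h\|_{\dot H^{-(d-\alpha)/2}}^2$ from assumption~(i) then gives the desired exponential decay.
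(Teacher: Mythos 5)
Your overall scheme — exponential decay of $m_c$, the dissipation identity for $\me$, and a hypocoercivity cross term added with a small coefficient — is exactly the paper's strategy, but your choice of cross term does not close without smallness, and that is where the proof genuinely breaks.

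You take the cross term $X(t)=\int_{\T^d}(u-m_c)\cdot\nabla\Lambda^{\alpha-d}h\,dx$, whereas the paper uses $\int_{\T^d}(u-m_c)\cdot\nabla W\star h\,dx$ with $W$ the Newtonian potential on $\T^d$, i.e.\ $-\Delta(W\star h)=h$. This is not a cosmetic change. First, $|X|\lesssim\|u-m_c\|_{L^2}\,\|h\|_{\dot H^{\alpha-d+1}}$, and since $\alpha-d+1\geq -(d-\alpha)/2$ (equivalently $\alpha\geq d-2$), the norm $\|h\|_{\dot H^{\alpha-d+1}}$ is of \emph{higher} order than the $\|h\|_{\dot H^{-(d-\alpha)/2}}$ controlled by $\me$ on the torus. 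Thus $\eta X$ is not controlled by a small multiple of $\me$, and your claimed equivalence $\mathcal{L}\approx\me$ for small $\eta$ does not hold; interpolating $\|h\|_{\dot H^{\alpha-d+1}}\lesssim\|h\|_{\dot H^{-(d-\alpha)/2}}^{\theta}\|h\|_{\dot H^{1}}^{1-\theta}$ with $\theta=\frac{2(d-\alpha)}{d-\alpha+2}<1$ only makes this worse, since a sub-linear power of $\|h\|_{\dot H^{-(d-\alpha)/2}}$ cannot be dominated by $\me^{1/2}$ when the latter is small. Second, and more seriously, after substituting $\partial_t h=-\nabla\cdot(\rho u)$ the operator $\nabla\Lambda^{\alpha-d}\nabla\cdot$ has positive order $2-(d-\alpha)$, which is why the extra term $\|\Lambda^{(\alpha-d)/2}\nabla\cdot u\|_{L^2}^2$ appears; your own interpolation gives $\|\Lambda^{(\alpha-d)/2}\nabla\cdot u\|_{L^2}^2\lesssim\|u-\bar u\|_{L^2}^{\,d-\alpha}\|\nabla u\|_{L^2}^{\,2-(d-\alpha)}$ with exponent $d-\alpha\in(0,2)$, which is \emph{sub-quadratic} in $\|u-\bar u\|_{L^2}$. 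Since $\md\approx\|u-m_c\|_{L^2}^2$, a sub-quadratic remainder $\eta\|u-\bar u\|_{L^2}^{d-\alpha}$ eventually dwarfs $\md$ as $\|u-m_c\|_{L^2}\to0$, no matter how small $\eta$ is chosen; the absorption by Young's inequality simply fails, and this is precisely the regime the proposition is supposed to treat (no smallness assumed, and the quantities are expected to become small in time).

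The paper's choice of $W$ (so that $\nabla W\star$ is of order $-1$ and $\nabla W\star(\nabla\cdot\,\cdot)$ is of order $0$) removes both problems simultaneously: the cross term is bounded by $\|u-m_c\|_{L^2}\|h\|_{\dot H^{-1}}\leq\|u-m_c\|_{L^2}\|h\|_{\dot H^{-(d-\alpha)/2}}$ by the torus spectral-gap monotonicity, hence is genuinely small relative to $\me$; and the $\partial_t h$-contribution produces only $\|u-m_c\|_{L^2}\,\|(1+h)(u-m_c)\|_{L^2}$-type terms, which are quadratic in $\|u-m_c\|_{L^2}$ and absorbable into $\md$. If you replace $\nabla\Lambda^{\alpha-d}h$ by $\nabla W\star h$ throughout your Step~3 (and correspondingly the helpful $h$-dissipation becomes $\|h\|_{\dot H^{-(d-\alpha)/2}}^2$ directly, via $-\int\nabla\Lambda^{\alpha-d}h\cdot\nabla W\star h\,dx=\int h\,\Lambda^{\alpha-d}h\,dx$), your Steps~1, 2, and 4 then close essentially as you describe and match the paper's argument.
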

\begin{remark}We notice that the required regularity and assumptions for solutions $(h,u)$ are guaranteed by Theorem \ref{main_thm}.
\end{remark}

In the lemma below, we first show that the modulated energy $\me(t)$ is not increasing in time.
\begin{lemma}\label{P3.1}
Let $(h, u)$ be a global classical solution to \eqref{np_ER2} with sufficient regularity. Then we have
\[
\frac{d}{dt}\me(t) + \md(t) = 0,
\]
where the dissipation rate function $\md$ is given by
\[
\md(t) := \int_{\T^d} (h+1)|u-m_c|^2\,dx.
\]
\end{lemma}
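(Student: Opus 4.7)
The plan is to exploit the total-energy identity of Proposition \ref{zeroth}, expand the modulated kinetic energy in terms of the total momentum, and then show that the total momentum $m_c(t)$ satisfies a simple ODE that kills the interaction correction exactly.

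First I would compute $\frac{d}{dt}m_c(t)$. Integrating the momentum equation in \eqref{np_ER} over $\T^d$ and using periodicity kills the convective flux, giving
\[
\frac{d}{dt}m_c(t) = -m_c(t) - \int_{\T^d}\rho\,\nabla\Lambda^{\alpha-d}h\,dx.
\]
Writing $\rho = 1 + h$, the constant piece $\int_{\T^d}\nabla\Lambda^{\alpha-d}h\,dx$ vanishes by periodicity (equivalently, it is a zero-frequency term), while $\int_{\T^d}h\,\nabla\Lambda^{\alpha-d}h\,dx=0$ by the self-adjointness of $\Lambda^{\alpha-d}$ together with the oddness of $\nabla$: passing to Fourier series, each component has integrand $\xi_j|\xi|^{\alpha-d}|\hat{h}(\xi)|^2$, which is odd in $\xi$. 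Hence
\[
\frac{d}{dt}m_c(t) = -m_c(t).
\]

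Next I would expand the kinetic part of $\me$. Using $\int_{\T^d}\rho u\,dx = m_c$, $\int_{\T^d}\rho\,dx = |\T^d|=1$ (from the neutrality condition $\int h\,dx=0$), one finds
\[
\frac12\int_{\T^d}\rho|u-m_c|^2\,dx \;=\; \frac12\int_{\T^d}\rho|u|^2\,dx \;-\; \frac12|m_c|^2,
\]
so that
\[
\me(t) \;=\; \frac12\int_{\T^d}\rho|u|^2\,dx + \frac12\int_{\T^d}h\,\Lambda^{\alpha-d}h\,dx \;-\; \frac12|m_c(t)|^2.
\]

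Finally I would differentiate this identity in time. For the first two terms I invoke Proposition \ref{zeroth}, which gives
\[
\frac{d}{dt}\Bigl(\tfrac12\!\int_{\T^d}\rho|u|^2\,dx + \tfrac12\!\int_{\T^d}h\,\Lambda^{\alpha-d}h\,dx\Bigr) \;=\; -\int_{\T^d}\rho|u|^2\,dx,
\]
and for the last term I use $\frac{d}{dt}m_c = -m_c$ to get $-\frac12\frac{d}{dt}|m_c|^2 = |m_c|^2$. Combining,
\[
\frac{d}{dt}\me(t) \;=\; -\int_{\T^d}\rho|u|^2\,dx + |m_c|^2 \;=\; -\int_{\T^d}\rho|u-m_c|^2\,dx \;=\; -\md(t),
\]
where the middle equality is just the expansion above read in reverse. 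This gives the claim.

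The only real subtlety is the identity $\int_{\T^d} h\nabla\Lambda^{\alpha-d}h\,dx = 0$, which is what turns the contribution of the Riesz forcing on the total momentum into the clean exponential ODE $\dot m_c = -m_c$; all other steps are algebraic rearrangements. The regularity hypotheses listed in the statement of Proposition \ref{main_prop} (in particular $\nabla u\in L^\infty$ and $h\in W^{1,\infty}$) are amply sufficient to justify differentiating under the integral and applying the momentum equation pointwise.
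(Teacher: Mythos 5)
Your proof is correct but takes a genuinely different route from the paper. The paper differentiates $\tfrac12\int_{\T^d}(h+1)|u-m_c|^2\,dx$ directly, substitutes both PDEs, and observes that the $m_c'$ contribution disappears because $\int_{\T^d}(h+1)(u-m_c)\,dx=0$; it never writes down the ODE for $m_c$, and the Riesz cross terms cancel against the derivative of $\tfrac12\int h\Lambda^{\alpha-d}h\,dx$ after invoking the symmetry of $\Lambda^{\alpha-d}$. You instead exploit the Pythagoras-type identity $\tfrac12\int\rho|u-m_c|^2\,dx=\tfrac12\int\rho|u|^2\,dx-\tfrac12|m_c|^2$ (valid under the normalization $|\T^d|=1$, which the paper also uses implicitly when discarding the $m_c'$ term), pair the first piece with the interaction energy so that Proposition \ref{zeroth} applies verbatim, and close by deriving $\dot m_c=-m_c$ from the integrated momentum equation together with $\int_{\T^d}h\,\nabla\Lambda^{\alpha-d}h\,dx=0$. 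Your route is more modular: it reuses the already-established total energy identity as a black box and isolates the exponential self-decay of the bulk momentum as the precise mechanism compensating the $|m_c|^2$ deficit, whereas the paper's direct calculation is shorter but intertwines these steps. Both rely on the same two facts---the self-adjointness of $\Lambda^{\alpha-d}$ and the zero-mean normalization of $h$---so the approaches are genuinely equivalent in content, differing only in organization.
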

\begin{proof}
Direct computation gives
\[\begin{aligned}
&\frac12\frac{d}{dt}\int_{\T^d} (h+1)|u-m_c|^2\,dx \cr
&\quad = \frac12 \int_{\T^d} \pa_t h |u-m_c|^2\,dx + \int_{\T^d} (h+1)(u-m_c) (\pa_t u - m_c')\,dx\\
&\quad = \int_{\T^d} ((h+1) u \cdot \nabla u)\cdot (u-m_c)\,dx -\int_{\T^d} (h+1)(u-m_c)\cdot \lt( u \cdot \nabla u + u +\nabla \Lambda^{\alpha-d} h\rt)\,dx\\
&\quad = -\int_{\T^d} (h+1) (u-m_c)\cdot u\,dx -\int_{\T^d} (h+1)(u-m_c) \cdot \nabla \Lambda^{\alpha-d}h\,dx\\
&\quad = -\int_{\T^d} (h+1) |u-m_c|^2\,dx - \int_{\T^d} (h+1) u \cdot \nabla \Lambda^{\alpha-d}h\,dx.
\end{aligned}\]
Here we used the symmetry of the operator $\Lambda^{\alpha-d}$:
\[
\int_{\T^d} (h+1)  \nabla \Lambda^{\alpha-d}h \,dx = \int_{\T^d} h \nabla \Lambda^{\alpha-d}h\,dx = 0.
\]
Since we have
\[
\frac{1}{2}\frac{d}{dt}\int_{\T^d} h\Lambda^{\alpha-d}h\,dx = \int_{\T^d} (h+1)u \cdot \nabla \Lambda^{\alpha-d}h\,dx,
\]
we combine the above results to conclude the desired result.
\end{proof}
Since the dissipation rate function $\md$ does not have a dissipation with respect to $h$, motivated from \cite{CJ21}, we introduce a perturbed modulated energy $\me^\sigma$:
\[
\me^\sigma := \me +\sigma\int_{\T^d} (u-m_c) \cdot \nabla W \star h\,dx,
\]
where $\sigma > 0$ will be chosen appropriately later and $W$ satisfies the following relations:
\begin{enumerate}
\item[(i)] The potential $W$ is an even function explicitly written as
\[
W(x) = \left\{\begin{array}{lcl}\displaystyle  -c_0 \log |x| + G_0(x) & \mbox{ if } & d=2, \\
c_1|x|^{2-d} + G_1(x) & \mbox{ if } & d\ge 3,\end{array}\right.
\]
where $c_0>0$ and $c_1>0$ are normalization constants and $G_0$ and $G_1$ are smooth functions over $\T^2$ and $\T^d$ ($d\ge 3$), respectively. 
\vspace{0.2cm}

\item[(ii)] For any $h \in L^2(\T^d)$ with $\int_{\T^d} h \,dx =0$, $U := W\star h \in H^1(\T^d)$ is the unique function that satisfies the following condition:
\bq\label{prop_W}
\int_{\T^d} U\,dx = 0 \quad \mbox{and} \quad \int_{\T^d} \nabla U \cdot \nabla \psi\,dx = \int_{\T^d} h\,\psi\,dx \quad \forall\, \psi \in H^1(\T^d),
\eq
i.e. $U$ is the unique weak solution to $-\Delta U = h$.
\end{enumerate}

\begin{remark}\label{R3.1}
For $h \in L^2(\T^d)$ with $\int_{\T^d} h\,dx = 0$, the followings hold:
\begin{enumerate}
\item[(i)]
For $W$ defined above, we have
\[
\|h\|_{\dot{H}^{-1}} \approx \|\nabla W \star h\|_{L^2}.
\]
\item[(ii)]
We can define $\dot{H}^{-s}(\T^d)$-norm as
\[
\|h\|_{\dot{H}^{-s}}:= \lt(\sum_{n \in \Z^d} |n|^{-2s} |\hat{h}(n)|^2 \rt)^{1/2}.
\]
Then, by definition, it is clear that  for $s_1 \ge s_2 \ge 0$
\[
 \|h\|_{\dot{H}^{-s_1}} \le \|h\|_{\dot{H}^{-s_2}}.\]
 In particular, we have
 \[
 \|h\|_{\dot{H}^{-1}} \le \|h\|_{\dot{H}^{-\frac{d-\alpha}{2}}},
 \]
 due to $(d-\alpha)/2 \in (0,1)$.
\end{enumerate}
\end{remark}

\begin{remark}
Due to Remark \ref{R3.1} and the assumptions in Proposition \ref{main_prop}, we find
\[
\lt|\int_{\T^d} (u-m_c)\cdot \nabla W \star h\,dx\rt| \ls \int_{\T^d} (h+1)|u-m_c|^2\,dx + \|h\|_{\dot{H}^{-1}}.
\]
Since $\|h\|_{\dot{H}^{-1}} \le \|h\|_{\dot{H}^{-\frac{d-\alpha}{2}}}$, for sufficiently small $\sigma$, we get
\bq\label{est_me1}
\me \approx \me^\sigma.
\eq
\end{remark}

We are now in a position to provide the details on the proof for Proposition \ref{main_prop}.
\begin{proof}[Proof of Proposition \ref{main_prop}] It is obvious that $\me^\sigma$ satisfies
\bq\label{est_me2}
\frac{d}{dt}\me^\sigma + \md^\sigma = 0,
\eq
where $\md^\sigma = \md^\sigma(t)$ is given as
\[
\md^\sigma := \md - \sigma\frac{d}{dt}\int_{\T^d}(u-m_c)\cdot \nabla W \star h\,dx.
\]
Now, we claim that
\[
\md^\sigma(t) \ge c \me^\sigma(t),
\]
for some positive constant $c$ independent of $t$. First, we estimate
\begin{align*}
\frac{d}{dt}&\int_{\T^d} (u-m_c)\cdot \nabla W\star h \,dx\\
&= -\int_{\T^d} \lt(u\cdot \nabla u + u + \nabla \Lambda^{\alpha-d}h \rt)\cdot \nabla W \star h\,dx  -m_c' \int_{\T^d} \nabla W \star h \,dx + \int_{\T^d} (u-m_c) \cdot \nabla W\star(\pa_t h)\,dx\\
&= -\int_{\T^d} (u \cdot \nabla u)\cdot \nabla W \star h \,dx -\int_{\T^d} (u-m_c) \cdot \nabla W\star h \,dx\\
&\quad -\int_{\T^d} \nabla \Lambda^{\alpha-d}h \cdot \nabla W\star h \,dx -\int_{\T^d} (u-m_c)\cdot \nabla W\star(\nabla \cdot ((h+1) u))\,dx\\
&=: \sum_{i=1}^4 \mathcal{J}_i.
\end{align*}
For $\mathcal{J}_1$, we recall that for $a = (a_1,\dots,a_d) \in \R^d$ and $b = (b_1,\dots,b_d) \in \R^d$
\[
\nabla \cdot (a \otimes b) = \sum_{j=1}^d \pa_{x_j}( a_i b_j) = a (\nabla \cdot b) + (b\cdot\nabla)a.
\]
This gives
\[\begin{aligned}
\mathcal{J}_1 &= -\int_{\T^d} (u \cdot \nabla (u-m_c))\cdot \nabla W \star h \,dx\\
&= -\int_{\T^d} \nabla \cdot ((u-m_c) \otimes u)\cdot \nabla W\star h \,dx +\int_{\T^d} (u-m_c) (\nabla \cdot u) \cdot \nabla W\star h \,dx\\
&= \int_{\T^d} ((u-m_c) \otimes u): \nabla^2 W\star h \,dx +\int_{\T^d} (u-m_c) (\nabla \cdot u) \cdot \nabla W\star h \,dx.
\end{aligned}\]
For $\mathcal{J}_3$, we use \eqref{prop_W} to get
\[
\mathcal{J}_3 = \int_{\T^d} (\Lambda^{\alpha-d} h)  \Delta W\star h \,dx = -\int_{\T^d} h \Lambda^{\alpha-d} h \,dx.
\]
For $\mathcal{J}_4$, we find
\[
\mathcal{J}_4 = -\int_{\T^d} (u-m_c) \cdot \nabla W\star(\nabla \cdot ((h+1)(u-m_c)))\,dx -\int_{\T^d} (u-m_c) \cdot \nabla W \star(\nabla \cdot ((h+1) m_c))\,dx.
\]
Here we rewrite the second term on the right hand side of the above as
\begin{align*}
-\int_{\T^d} (u-m_c) \cdot \nabla W \star(\nabla \cdot((h+1) m_c))\,dx
&= \int_{\T^d} (\nabla \cdot (u-m_c))  W\star(\nabla \cdot ((h+1) m_c))\,dx\\
&= \iint_{\T^d \times \T^d} (\nabla \cdot (u-m_c)) W(x-y)\nabla_y \cdot (h(y)m_c)\,dxdy\\
&=\iint_{\T^d \times \T^d} (\nabla \cdot (u-m_c)) m_c \cdot \nabla W(x-y) h(y)\,dxdy\\
&= \int_{\T^d} m_c( \nabla \cdot (u-m_c))\cdot \nabla W\star h \,dx\\
&= \int_{\T^d} \nabla \cdot (m_c \otimes (u-m_c)) \cdot \nabla W\star h \,dx\cr
&= -\int_{\T^d} (m_c \otimes (u-m_c)) : \nabla^2 W\star h\,dx\cr
&=-\int_{\T^d} ((u-m_c)\otimes m_c) : \nabla^2 W\star h\,dx,
\end{align*}
where we used the symmetry of $\nabla^2 W\star h $ to get the last equality. 
Thus, $\mathcal{J}_4$ can be estimated as
\[\begin{aligned}
\mathcal{J}_4 
&=-\int_{\T^d} (u-m_c) \cdot \nabla W\star(\nabla \cdot ((h+1)(u-m_c)))\,dx -\int_{\T^d} ((u-m_c)\otimes m_c) : \nabla^2 W\star h \,dx.
\end{aligned}\]
Hence, we combine the estimates for $J_i$'s to yield
\begin{align*}
\frac{d}{dt}&\int_{\T^d} (u-m_c)\cdot \nabla W\star h \,dx\\
&= \int_{\T^d}( (u-m_c) \otimes (u-m_c) ): \nabla^2 W\star h \,dx + \int_{\T^d} (\nabla \cdot u  -1) (u-m_c)\cdot \nabla W\star h \,dx\\
&\quad -\int_{\T^d} h \Lambda^{\alpha-d}h\,dx -\int_{\T^d} (u-m_c)\cdot \nabla W\star(\nabla \cdot ((h+1)(u-m_c)))\,dx.
\end{align*}
Therefore, we choose sufficiently small $\sigma>0$ to obtain
\begin{align*}
\md^\sigma &= \md  - \sigma  \int_{\T^d}( (u-m_c) \otimes (u-m_c) ): \nabla^2 W\star h \,dx -\sigma \int_{\T^d} (\nabla \cdot u  -1) (u-m_c)\cdot \nabla W\star h \,dx\\
&\quad + \sigma \int_{\T^d} h \Lambda^{\alpha-d} h \,dx + \sigma\int_{\T^d} (u-m_c)\cdot \nabla W\star(\nabla \cdot ((h+1)(u-m_c)))\,dx\\
&\ge \md - \sigma \|\nabla^2 W\star h\|_{L^\infty} \|(u-m_c)\|_{L^2}^2 - \sigma(1+\|\nabla \cdot u\|_{L^\infty}) \|u-m_c\|_{L^2} \|\nabla W\star h\|_{L^2}\\
&\quad +\sigma \|h\|_{\dot{H}^{-\frac{d-\alpha}{2}}}^2 - \sigma \|u-m_c\|_{L^2} \|\nabla W\star(\nabla \cdot ((h+1)(u-m_c)))\|_{L^2}\\
&\ge \md -C\sigma \|\nabla W\|_{L^1}\|\nabla h\|_{L^\infty}\lt(\int_{\T^d} (h+1)|u-m_c|^2\,dx \rt) - C\sigma \lt(\int_{\T^d} (h+1) |u-m_c|^2\,dx\rt)^{1/2} \|h\|_{\dot{H}^{-1}}\\
&\quad  +\sigma \|h\|_{\dot{H}^{-\frac{d-\alpha}{2}}}^2 - C\sigma \|u-m_c\|_{L^2} \|\nabla \cdot ((h+1)(u-m_c)))\|_{\dot{H}^{-1}}\\
& \ge (1- C(\sigma^{1/2} + \sigma))\md  + (\sigma - C\sigma^{3/2})\|h\|_{\dot{H}^{-\frac{d-\alpha}{2}}}^2\\
&\ge c \me^\sigma,
\end{align*}
where $c = c(h_{min}, \|\nabla W\|_{L^1}, \|h\|_{W^{1,\infty}}, \|\nabla u\|_{L^\infty}) $ is a positive constant independent of $t$. Thus the claim is proved, and we have from \eqref{est_me2} that
\[
\frac{d}{dt} \me^\sigma + c \me^\sigma \leq 0.
\]
Applying Gr\"onwall's lemma to above deduces the exponential decay of $\me^\sigma$, and this, combined with \eqref{est_me0} and \eqref{est_me1}, concludes the desired result.
\end{proof}

\subsubsection{Proof of Theorem \ref{main_thm2}: periodic case} We first notice that the average of momentum satisfies
\[
m_c'(t) = - m_c(t), \quad \mbox{i.e.} \quad m_c(t) = m_c(0) e^{-t},
\]
due to the symmetry of the operator $\Lambda^{\alpha-d}$. This together with Proposition \ref{main_prop} gives
\[
\int_{\T^d} |u|^2\,dx \leq 2\int_{\T^d} |u - m_c|^2\,dx + 2|m_c|^2 \leq C_1e^{-C_2t},
\] 
for some $C_i > 0,i=1,2$ independent of $t$. On the other hand, it follows from \eqref{est_he} and the equivalence relations \eqref{equi1}-\eqref{equi2} that
\[
\frac{d}{dt} \overline{\mathcal{Y}}^m + \lambda_5\overline{\mathcal{Y}}^m \leq C_1 e^{-C_2t},
\]
for some $\lambda_5 > 0$ which is independent of $t$, where $\overline{\mathcal{Y}}^m$ is appeared in \eqref{def_y2}. 
Applying Gr\"onwall's lemma to the above yields the exponential decay of $\overline{\mathcal{Y}}^m$ towards zero as $t \to \infty$. Note that the $L^2$ norm of $h$ is not included in $\overline{\mathcal{Y}}^m$. For the decay estimate of $\|h(\cdot,t)\|_{L^2}$, we use Lemma \ref{neg_sobo} to get
\[
\|h\|_{L^2} \leq \|h\|_{\dot{H}^{-\frac{d-\alpha}{2}}}^\theta \|\nabla h\|_{L^2}^{1-\theta} \quad \mbox{with} \quad \theta = \frac{1}{1 + \frac{d-\alpha}{2}}.
\]
Since the right hand side of the above converges to zero exponentially fast, we also have the same exponential decay rate of convergence of $\|h\|_{L^2}$. This completes the proof.

\section*{Acknowledgments}
The work of Y.-P. Choi is supported by NRF grant (No. 2017R1C1B2012918) and Yonsei University Research Fund of 2020-22-0505. The work of J. Jung is supported by NRF grant (No. 2019R1A6A1A10073437).

\appendix

\section{Proof of Lemma \ref{h_decay01}}\label{app_a}
In this appendix, we provide the details of proof of Lemma \ref{h_decay01}. 

(i) We apply Proposition \ref{ineqs} to obtain
$$\begin{aligned}
&\lt\|\nabla\lt(\frac{\nabla h}{\rho^2} \cdot \pa^k (\rho u) \rt) \rt\|_{L^2} \cr
&\quad \leq \lt\| \nabla\lt(\frac{\nabla h}{\rho^2} \cdot \lt(\pa^k (h u) -h\pa^k u\rt)\rt) \rt\|_{L^2} + \lt\| \nabla\lt(\frac{h\nabla h}{\rho^2} \cdot \pa^k  u \rt) \rt\|_{L^2}\cr
&\quad \leq C\lt( \lt\| \frac{\nabla h}{\rho^2}\rt\|_{W^{1,\infty}} +\lt\| \frac{h\nabla h}{\rho^2}\rt\|_{W^{1,\infty}}  \rt)\lt( \|\pa^k (hu)-h\pa^k u\|_{H^1} + \|\pa^k u\|_{H^1}\rt)\cr
&\quad \leq C\lt( \frac{\|\nabla^2 h\|_{L^\infty}}{(1-\|h\|_{L^\infty})^2} + \frac{\|\nabla h\|_{L^\infty}^2}{(1-\|h\|_{L^\infty})^3}\rt) \lt( \|\nabla h\|_{W^{1,\infty}}\|\pa^{k}u\|_{H^1} + \|\pa^{k} h\|_{H^1}\|u\|_{L^\infty} + \|\pa^k u\|_{H^1}\rt)\cr
&\quad \leq C\lt(1+\|\nabla h\|_{H^{m-1}} \rt)^2 \|\nabla h\|_{H^{m-1}} \|u\|_{H^{m+\frac{d-\alpha}{2}}},
\end{aligned}$$
where $C=C(m,k,d,\alpha)$ is a positive constant independent of $T$. \newline

(ii) Note that the left hand side equals zero when $k = 1$. For $k \geq 2$, we again use Proposition \ref{ineqs} to estimate
$$\begin{aligned}
&\lt\|\nabla\lt[ \nabla\cdot\lt( \frac1\rho \lt( \pa^k (\rho u) - (\pa^k \rho) u - \rho (\pa^k u)\rt)\rt)\rt] \rt\|_{L^2} \cr
&\quad \leq C\lt\|\nabla^2 \lt(\frac1\rho\rt) \rt\|_{L^\infty} \lt\|  \pa^k (\rho u) - (\pa^k \rho) u - \rho (\pa^k u) \rt\|_{L^2}\cr
&\qquad + C\lt\|\nabla \lt(\frac1\rho\rt) \rt\|_{L^\infty} \lt\| \nabla\lt( \pa^k (\rho u) - (\pa^k \rho) u - \rho (\pa^k u) \rt)\rt\|_{L^2}\cr
&\qquad +\frac{1}{1-\|h\|_{L^\infty}}\lt\| \nabla^2\lt( \pa^k (\rho u) - (\pa^k \rho) u - \rho (\pa^k u) \rt)\rt\|_{L^2}\cr
&\quad\le C\lt(\frac{\|\nabla^2 h\|_{L^\infty}}{(1-\|h\|_{L^\infty})^2} + \frac{\|\nabla h\|_{L^\infty}^2}{(1-\|h\|_{L^\infty})^3} \rt)\lt(\|\nabla h\|_{L^\infty} \|\pa^{k-1} u\|_{L^2} + \|\pa^{k-1} h\|_{L^2} \|\nabla u\|_{L^\infty} \rt) \\
&\qquad + C\frac{\|\nabla h\|_{L^\infty}}{(1-\|h\|_{L^\infty})^2}\Big[\lt(\|\nabla^2h\|_{L^\infty}  \|\pa^{k-1}u\|_{L^2} + \|\nabla \pa^{k-1} h\|_{L^2} \|\nabla u\|_{L^\infty}\rt)\\
&\hspace{4cm} + \lt(\|\nabla h\|_{L^\infty} \|\pa^{k-1}\nabla u\|_{L^2}  + \|\pa^{k-1} h\|_{L^2} \|\nabla^2 u\|_{L^\infty}\rt)\Big] \\
&\qquad + \frac{C}{1-\|h\|_{L^\infty}}\Big[\lt(\|\nabla^3 h\|_{L^\infty}\|\pa^{k-1}u\|_{L^2} + \|\pa^{k-1}\nabla^2 h\|_{L^2} \|\nabla u\|_{L^\infty} \rt)\\
&\hspace{4cm} + \lt(\|\nabla^2 h\|_{L^\infty} \|\pa^{k-1} \nabla u\|_{L^2} + \|\pa^{k-1} \nabla h\|_{L^2} \|\nabla^2 u\|_{L^\infty} \rt)\\
&\hspace{5cm} + \lt(\|\nabla h\|_{L^\infty} \|\pa^{k-1}\nabla^2u\|_{L^2} + \|\pa^{k-1} h\|_{L^2}\|\nabla^3 u\|_{L^\infty} \rt) \Big] \\
&\quad\le C\lt( 1+ \|\nabla h\|_{W^{1,\infty}}\rt)^2\|\nabla h\|_{H^{m-1}}\|u\|_{H^{m+\frac{d-\alpha}{2}}}\\
&\quad\le C\lt( 1+ \|\nabla h\|_{H^{m-1}}\rt)^2\|\nabla h\|_{H^{m-1}}\|u\|_{H^{m+\frac{d-\alpha}{2}}}.
\end{aligned}$$

(iii) Note that
\[
 \pa\lt(\frac{1}{\rho^2} (\pa^k h) \nabla h\rt) = \frac{1}{\rho^2}\lt(-\frac{\pa h}{2\rho}(\pa^k h) \nabla h + \pa^{k+1}h \nabla h + (\pa^k h) \nabla \pa h \rt),
\]
and thus taking $L^2$ norm on the both sides of above and using Proposition \ref{ineqs} gives
$$\begin{aligned}
&\lt\| \pa\lt(\frac{1}{\rho^2} (\pa^k h) \nabla h\rt)\rt\|_{L^2} \cr
&\quad \leq \frac{C}{(1-\|h\|_{L^\infty})^2}\lt(\frac{\|\nabla h\|_{L^\infty}^2}{2(1-\|h\|_{L^\infty})} \|\pa^k h\|_{L^2} + \|\pa^{k+1} h\|_{L^2}\|\nabla h\|_{L^\infty} + \|\pa^k h\|_{L^2} \|\nabla^2 h\|_{L^\infty} \rt)\cr
&\quad \leq C\lt(1+\|\nabla h\|_{H^{m-1}}\rt)\|\nabla h\|_{H^{m-1}}^2.
\end{aligned}$$

(iv) It follows from the equation of $u$ in \eqref{np_ER2} that
$$\begin{aligned}
-\Lambda^{d-\alpha}\pa^{k-1} \pa_t u &= \Lambda^{d-\alpha} \pa^{k-1} \lt( u \cdot \nabla u + u + \Lambda^{\alpha-d} \nabla h\rt)\cr
&= u \cdot \nabla \Lambda^{d-\alpha} \pa^{k-1} u + [\Lambda^{d-\alpha}, u \cdot \nabla]\pa^{k-1} u \cr
&\quad + \Lambda^{d-\alpha} [\pa^{k-1}, u \cdot \nabla ]u +\Lambda^{d-\alpha}\pa^{k-1}u + \pa^{k-1} \nabla h.
\end{aligned}$$
Thus we have
$$\begin{aligned}
&\lt\|\Lambda^{d-\alpha}\pa^{k-1} \pa_t u\rt\|_{L^2}  \cr
&\quad \leq C\bigg( \|u\|_{L^\infty} \|\nabla \Lambda^{d-\alpha} \pa^{k-1} u\|_{L^2}  + \|u\|_{H^{\frac{d}{2}+1+(d-\alpha)+\e}} \|\pa^{k-1} u\|_{H^{d-\alpha}} \cr
&\hspace{4cm} + \sum_{j=1}^{k-1} \|\Lambda^{d-\alpha} (\pa^j u \cdot \nabla \pa^{k-1-j} u)\|_{L^2} + \|\Lambda^{d-\alpha} \pa^{k-1} u\|_{L^2} + \|\pa^{k-1} \nabla h\|_{L^2}\bigg)\\
&\quad \le C \lt(\|u\|_{H^{m+\frac{d-\alpha}{2}}}^2 + \|u\|_{H^{m+\frac{d-\alpha}{2}}} + \|\nabla h\|_{H^{m-1}}\rt),
\end{aligned}$$
where we used Lemma \ref{comm_est2} with $\e>0$ satisfying $(d-\alpha)/2 + \e <1$, used Proposition \ref{ineqs} to get
\[\begin{aligned}
\|\Lambda^{d-\alpha} ( \pa^j u \cdot \nabla \pa^{k-1-j} u)\|_{L^2} &\lesssim_{\alpha,d} \| \pa^j u \cdot \nabla \pa^{k-1-j} u\|_{H^2}\\
&\lesssim_{\alpha,d,k,j} \|\pa^j u\|_{H^{m-j}} \|\pa^{k-j} u\|_{H^{m-(k-j)}}  \\
&\lesssim_{\alpha, d, k,j} \|u\|_{H^m}^2,
\end{aligned}\]
for $1 \le j \le k-1$. \newline

(v) By adding and subtracting, we obtain
$$\begin{aligned}
&\Lambda^{d-\alpha} \pa^{k-1}  (\nabla u : (\nabla  u)^T) \cr
&\quad = \sum_{i=1}^d  \Lambda^{d-\alpha} \pa^{k-1}  (\pa_{x_i} u \cdot \nabla  u_i)\cr
&\quad =\sum_{i=1}^d \Lambda^{d-\alpha} \lt((\pa^{k-1}  \pa_{x_i} u) \cdot \nabla  u_i  +\pa_{x_i} u \cdot \nabla  \pa^{k-1} u_i \rt) \cr
&\qquad + \sum_{i=1}^d \Lambda^{d-\alpha} \lt( \pa^{k-1}(\pa_{x_i} u \cdot \nabla u_i) - \pa_{x_i} u \cdot \nabla \pa^{k-1} u_i  - \pa_{x_i} (\pa^{k-1} u) \cdot \nabla u_i\rt)\cr
&\quad =2\sum_{i=1}^d \pa_{x_i} u \cdot \nabla \Lambda^{d-\alpha} \pa^{k-1}  u_i +2[\Lambda^{d-\alpha}, \pa_{x_i} u \cdot \nabla] \pa^{k-1}  u_i\cr
&\qquad + \sum_{i=1}^d \Lambda^{d-\alpha} \lt( \pa^{k-1}(\pa_{x_i} u \cdot \nabla u_i) - \pa_{x_i} u \cdot \nabla \pa^{k-1} u_i  - \pa_{x_i} (\pa^{k-1} u) \cdot \nabla u_i\rt).
\end{aligned}$$
We then apply Lemma \ref{comm_est2} and Proposition \ref{ineqs} to deduce
$$\begin{aligned}
&\|\Lambda^{d-\alpha} \pa^{k-1}  (\nabla u : (\nabla  u)^T)\|_{L^2}\cr
&\quad \leq C\bigg( \|\nabla u\|_{L^\infty} \|u\|_{H^{k+(d-\alpha)}} + \|\nabla u\|_{H^{\frac{d}{2}+1+(d-\alpha)+\e}} \|\pa^{k-1} u\|_{H^{d-\alpha}} + \sum_{\ell=1}^{k-2} \|\Lambda^{d-\alpha}(\pa^{\ell+1} u \cdot \nabla \pa^{k-1-\ell}u)\|_{L^2} \bigg)\cr
&\quad \leq C\|u\|_{H^{m+\frac{d-\alpha}{2}}}^2,
\end{aligned}$$
where $\e$ satisfies $(d-\alpha)/2+\e <1$.
%
%
%
%

\end{document}